\newcommand{\NP}{{NP}}
\newtheorem{theorem}{Theorem}[section]
\newtheorem{proposition}[theorem]{Proposition}
\newtheorem{lemma}[theorem]{Lemma}
\newtheorem{corollary}[theorem]{Corollary}
\theoremstyle{definition}
\newtheorem{remark}[theorem]{Remark}
\title{Bisimplicial separators}
\author{Martin Milani\v{c}\thanks{FAMNIT and IAM, University of Primorska, Koper, Slovenia. Email: \texttt{martin.milanic@upr.si}. Partially supported by the Slovenian Research Agency (I0-0035, research program P1-0285 and research projects N1-0102, N1-0160, J1-3001, J1-3002, J1-3003, J1-4008, and J1-4084).}
\and Irena Penev\thanks{Computer Science Institute of Charles University (I\'{U}UK), Prague, Czech Republic. Email: \texttt{ipenev@iuuk.mff.cuni.cz}.} \and Nevena Piva\v{c}\thanks{FAMNIT and IAM, University of Primorska, Koper, Slovenia. Email: \texttt{nevena.pivac@iam.upr.si}. Partially supported by the Slovenian Research Agency (research program P1-0285, research projects N1-0102 and J1-4008, and a Young Researchers Grant).} 
\and Kristina Vu\v{s}kovi\'c\thanks{School of Computing, University of Leeds, Leeds, UK. Email: \texttt{k.vuskovic@leeds.ac.uk}. Partially supported by
EPSRC grant EP/V002813/1.}}
\begin{document}
\maketitle

\begin{abstract}
A \emph{minimal separator} of a graph $G$ is a set $S \subseteq V(G)$ such that there exist vertices $a,b \in V(G) \setminus S$ with the property that $S$ separates $a$ from $b$ in $G$, but no proper subset of $S$ does.
For an integer $k\ge 0$, we say that a minimal separator is \emph{$k$-simplicial} if it can be covered by $k$ cliques and denote by $\mathcal{G}_k$ the class of all graphs in which each minimal separator is $k$-simplicial.
We show that for each $k \geq 0$, the class $\mathcal{G}_k$ is closed under induced minors, and we use this to show that the \textsc{Maximum Weight Stable Set} problem can be solved in polynomial time for $\mathcal{G}_k$.
We also give a complete list of minimal forbidden induced minors for $\mathcal{G}_2$.
Next, we show that, for $k \geq 1$, every nonnull graph in $\mathcal{G}_k$ has a $k$-simplicial vertex, i.e., a vertex whose neighborhood is a union of $k$ cliques; we deduce that the \textsc{Maximum Weight Clique} problem can be solved in polynomial time for graphs in $\mathcal{G}_2$.
Further, we show that, for $k \geq 3$, it is \NP-hard to recognize graphs in $\mathcal{G}_k$; the time complexity of recognizing graphs in $\mathcal{G}_2$ is unknown. 
We also show that the \textsc{Maximum Clique} problem is \NP-hard for graphs in $\mathcal{G}_3$. Finally, we prove a decomposition theorem for diamond-free graphs in $\mathcal{G}_2$ (where the \emph{diamond} is the graph obtained from $K_4$ by deleting one edge), and we use this theorem to obtain polynomial-time algorithms for the \textsc{Vertex Coloring} and recognition problems for diamond-free graphs in $\mathcal{G}_2$, and improved running times for the \textsc{Maximum Weight Clique} and \textsc{Maximum Weight Stable Set} problems for this class of graphs.
\end{abstract}

\textbf{Keywords:} minimal separators, bisimplicial separators, induced minors, graph algorithms

\section{Introduction}

All graphs in this paper are finite, simple, and undirected. Our graphs may possibly be null.
For a graph $G$ and nonadjacent vertices $a,b \in V(G)$,
\begin{itemize}
\item an \emph{$(a,b)$-separator} of $G$ is a set $S \subseteq V(G) \setminus \{a,b\}$ such that $a$ and $b$ belong to distinct components of $G \setminus S$;
\item a \emph{minimal $(a,b)$-separator} of $G$ is an $(a,b)$-separator $S$ of $G$ such that no proper subset of $S$ is an $(a,b)$-separator of $G$.
\end{itemize}

For a graph $G$, a set $S \subseteq V(G)$ is a \emph{separator} (resp.\ \emph{minimal separator}) of $G$ if there exist distinct, nonadjacent vertices $a,b \in V(G) \setminus S$ such that $S$ is an $(a,b)$-separator (resp.\ minimal $(a,b)$-separator) of $G$. Note that it is possible that $S$ is a minimal separator of a graph $G$, even though some $S' \subsetneqq S$ is also a separator of $G$. Indeed, there may be a pair $a,b$ of nonadjacent vertices such that $S$ is a minimal $(a,b)$-separator of $G$, as well as some other pair $a',b'$ of nonadjacent vertices such that some $S' \subsetneqq S$ is an $(a',b')$-separator of $G$.

A graph is \emph{chordal} if it contains no induced cycles of length greater than three.
Minimal separators have been studied since at least the 1960s, when chordal graphs were characterized as precisely those graphs in which all minimal separators are cliques~\cite{D61}.
Minimal separators were subsequently studied in~\cite{moplex} in the context of moplexes, have played an important role in sparse matrix computations via minimal triangulations (for a survey, see~\cite{MinTriangulations}), and have also had numerous algorithmic applications (see, e.g.,~\cite{BackerATFree, SafeSepTW, TWFillIn, tarjan1985decomposition}).
This paper is a contribution to the study of minimal separators.

For a class $\mathcal{C}$ of graphs, we denote by $\mathcal{G}_{\mathcal{C}}$ the class of all graphs $G$ such that every minimal separator of $G$ induces a graph from $\mathcal{C}$.
Since complete graphs have no separators, we see for all classes $\mathcal{C}$, the class $\mathcal{G}_{\mathcal{C}}$ contains all complete graphs (including the null graph).
For a nonnegative integer $k$, we denote by $\mathcal{G}_k$ the class of all graphs $G$ that have the property that every minimal separator of $G$ is a union of $k$ (possibly empty) cliques.\footnote{Clearly, given a graph $G$ and a set $S\subseteq V(G)$, the set $S$ is a union of $k$ cliques if and only if $S$ is a union of $k$ pairwise disjoint cliques.}
Obviously, $\mathcal{G}_0 \subseteq \mathcal{G}_1 \subseteq \mathcal{G}_2 \subseteq \dots$, and all these inclusions are proper, as verified by the class of complete bipartite graphs with exactly two vertices in one part.
Note that $\mathcal{G}_0$ is the class of all disjoint unions of (arbitrarily many) complete graphs, and (by~\cite{D61}) $\mathcal{G}_1$ is the class of all chordal graphs.
For a nonnegative integer $k$, we denote by $\mathcal{C}_k$ the class of all graphs whose vertex sets can be partitioned into $k$ (possibly empty) cliques; clearly, $\mathcal{G}_k = \mathcal{G}_{\mathcal{C}_k}$.

In this paper, we prove a number of results about classes of the form $\mathcal{G}_{\mathcal{C}}$, where $\mathcal{C}$ is a hereditary class.
We place particular emphasis on the class $\mathcal{G}_2$. By the above, $\mathcal{G}_2$ contains all chordal graphs.
Moreover, it is easy to see that all circular-arc graphs (that is, intersection graphs of arcs on a circle) belong to $\mathcal{G}_2$.

In \cref{sec:basic}, we prove some basic properties of the class $\mathcal{G}_{\mathcal{C}}$, when $\mathcal{C}$ satisfies various hypotheses.

In \cref{sec:ForbIndMinor}, we focus on hereditary graph classes that are closed under edge addition.
We show that for any such class $\mathcal{C}$, both the class $\mathcal{C}$ and the corresponding class $\mathcal{G}_\mathcal{C}$ are closed under induced minors, and we characterize the class $\mathcal{G}_\mathcal{C}$ in terms of forbidden induced minors. 
In the case when $\mathcal{C}$ is also closed under the addition of universal vertices, the class of \emph{minimal} forbidden induced minors for $\mathcal{G}_\mathcal{C}$ is described precisely.
As a consequence of these results, we obtain that the classes $\mathcal{G}_k$ ($k \geq 0$) are closed under induced minors and give a complete list of minimal forbidden induced minors for the class $\mathcal{G}_2$. 
Combining these results with some results from the literature~\cite{TreeDecompBoundAlpha, MR3647815}, we show that for every integer $k \geq 0$, the \textsc{Maximum Weight Stable Set Problem} can be solved in polynomial time for graphs in $\mathcal{G}_k$, and we further show that all 1-perfectly-orientable graphs belong to $\mathcal{G}_2$.

In \cref{sec:k-simplicial}, we show that for every $k \geq 1$, every nonnull graph in $\mathcal{G}_k$ has a $k$-simplicial vertex (i.e., a vertex whose neighborhood is a union of $k$ cliques).
We do this by showing that every LexBFS ordering of a graph in $\mathcal{G}_k$ is a $k$-simplicial elimination ordering.\footnote{We postpone the statements of precise definitions to~\cref{sec:k-simplicial}.} 
This generalizes a result by Rose, Tarjan, and Lueker on chordal graphs~\cite{rose1976algorithmic}, which corresponds to the case $k = 1$
Using~\cite{ElimGraphs} we deduce that the \textsc{Maximum Weight Clique} problem can be solved in polynomial time for graphs in $\mathcal{G}_2$.

In \cref{sec:recGGK}, we show that for each $k\ge 3$, it is \NP-hard to recognize graphs in $\mathcal{G}_k$; the time complexity of recognizing graphs in $\mathcal{G}_2$ is unknown. We further show that the \textsc{Maximum Clique} problem is \NP-hard for $\mathcal{G}_3$ (and consequently for $\mathcal{G}_k$ whenever $k \geq 3$).
Note that, since \textsc{Vertex Coloring} is \NP-hard for circular-arc graphs~\cite{CircArcColNPComp}, which form a subclass of $\mathcal{G}_2$, the problem is also \NP-hard for $\mathcal{G}_k$, whenever $k \geq 2$.

The \emph{diamond} is the four-vertex graph obtained from the complete graph $K_4$ by deleting one edge.
In \cref{sec:diamond}, we prove a decomposition theorem for diamond-free graphs in $\mathcal{G}_2$, and use this theorem to obtain polynomial-time algorithms for the \textsc{Vertex Coloring} and recognition problems for diamond-free graphs in $\mathcal{G}_2$, and improved running times for the \textsc{Maximum Weight Clique} and \textsc{Maximum Weight Stable Set} problems in this class of graphs.

\cref{table1} summarizes our algorithmic and complexity results.
Since $\mathcal{G}_0$ is the class of all disjoint unions of complete graphs, and $\mathcal{G}_1$ is the class of all chordal graphs, all problems from the table below can be solved in linear time for $\mathcal{G}_0$ and $\mathcal{G}_1$ (see~\cite{rose1976algorithmic,MR0392683}).

\begin{table}
\begin{center}
\begin{tabular}{|l || c | c | c |}
 \hline
  & diamond-free & & \\
  & graphs in $\mathcal{G}_2$ & $\mathcal{G}_2$ & $\mathcal{G}_k$ ($k \geq 3$) \\
[0.5ex]
 \hline\hline
 recognition & $\mathcal{O}(n^{\omega}\log n)$ & ? & \NP-hard \\
 \hline
 \textsc{Maximum Weight Clique} & $\mathcal{O}(n^{\omega}\log n)$ & $\mathcal{O}(n^{3+o(1)})$ & \NP-hard \\
 \hline
 \textsc{Maximum Weight Stable Set} & $\mathcal{O}(n^2(n+m))$ & $\mathcal{O}(n^6)$ & $\mathcal{O}(n^{2k+2})$ \\
 \hline
 \textsc{Vertex Coloring} & $\mathcal{O}(n^{\omega}\log n)$ & \NP-hard & \NP-hard \\
 \hline
\end{tabular}
\caption{Summary of our algorithmic and complexity results.
The number of vertices and edges of the input graph are denoted by $n$ and $m$, respectively, and $\omega<2.3728596$ denotes the matrix multiplication exponent (see~\cite{MR4262465}).
}\label{table1}
\end{center}
\end{table}

\subsection{Terminology and notation} \label{subsec:def}

The vertex set and the edge set of a graph $G$ are denoted by $V(G)$ and $E(G)$, respectively.
The complement of $G$ is denoted by $\overline{G}$.

A \emph{clique} in a graph $G$ is a (possibly empty) set of pairwise adjacent vertices, and a \emph{stable set} in $G$ is a (possibly empty) set of pairwise nonadjacent vertices.
Given a graph $G$ and a vertex $x \in V(G)$, we denote by $N_G(x)$ the set of all neighbors of $x$ in $G$, and we set $N_G[x] := \{x\} \cup N_G(x)$.
For a nonnegative integer $k$, a vertex $x \in V(G)$ is \emph{$k$-simplicial} in $G$ if $N_G(x)$ is the union of $k$ cliques.
A $1$-simplicial vertex is also called \emph{simplicial}, and a $2$-simplicial vertex is also called \emph{bisimplicial}.
Analogously, for a graph $G$ and a set $X \subseteq V(G)$, we say that $X$ is \emph{$k$-simplicial} in $G$ if it is a union of $k$ cliques and
\emph{bisimplicial} if it is $2$-simplicial.
In particular, a graph $G$ belongs to the class $\mathcal{G}_k$ if and only if every minimal separator of $G$ is $k$-simplicial, and to $\mathcal{G}_2$ if and only if every minimal separator of $G$ is bisimplicial.

We say that a graph $H$ is an \emph{induced subgraph} of a graph $G$ if $H$ can be obtained from $G$ via a (possibly null) sequence of vertex deletions.
Given graphs $H$ and $G$, we say that $G$ is \emph{$H$-free} if no induced subgraph of $G$ is isomorphic to $H$.
For a graph $G$ and a set $X \subseteq V(G)$, we denote by $G[X]$ the subgraph of $G$ induced by $X$; if $X = \{x_1,\dots,x_t\}$, we sometimes write $G[x_1,\dots,x_t]$ instead of $G[X]$.
Furthermore, $G \setminus X$ is the graph obtained from $G$ by deleting all vertices in $X$, i.e., $G \setminus X := G[V(G) \setminus X]$; if $X = \{x\}$, we sometimes write $G \setminus x$ instead of $G \setminus X$.
A \emph{path} in $G$ is a nonempty sequence $p_1,\ldots,p_k$ of pairwise distinct vertices of $G$ such that $p_i$ and $p_{i+1}$ are adjacent for all $i\in \{1,\ldots, k-1\}$.

For a nonnegative integer $n$, we denote by $K_n$, $P_n$, and $C_n$, respectively, the complete graph, path graph, and cycle graph on $n$ vertices.
For two nonnegative integers $p$ and $q$, we denote by $K_{p,q}$ the complete bipartite graph with parts of size $p$ and $q$.
A class $\mathcal{C}$ of graphs is \emph{hereditary} if for all $G \in \mathcal{C}$, all (isomorphic copies of) induced subgraphs of $G$ belong to $\mathcal{C}$.

Given a graph $G$ and an edge $e= xy\in E(G)$, \emph{subdividing} the edge $e$ means replacing $e$ with a path of length two, that is, removing the edge $e$ and adding a new vertex $z$ adjacent to precisely $x$ and $y$.
A \emph{subdivision} of a graph $G$ is any graph obtained from $G$ by a (possibly null) sequence of edge subdivisions.
Given a graph $G$ and an edge $e\in E(G)$, \emph{contracting} the edge $e = xy$ means replacing the vertices $x$ and $y$ in $G$ with a new vertex $v^{xy}$ adjacent to every vertex that is adjacent in $G$ to $x$ or $y$.
We denote by $G/e$ the graph obtained from $G$ by contracting $e$.
We say that a graph $H$ is an \emph{induced minor} of a graph $G$ if $H$ can be obtained from $G$ via a (possibly null) sequence of vertex deletions and edge contractions.
Given graphs $H$ and $G$, we say that $G$ is \emph{$H$-induced-minor-free} if no induced minor of $G$ is isomorphic to $H$.
For a family $\mathcal{H}$ of graphs, we say that a graph $G$ is \emph{$\mathcal{H}$-induced-minor-free} if $G$ is $H$-induced-minor-free for all graphs $H \in \mathcal{H}$.

Given two graphs $G$ and $H$, an \emph{induced minor model} of $H$ in $G$ is a family $\{X_v\}_{v \in V(H)}$ of nonempty, pairwise disjoint subsets of $V(G)$, each inducing a connected subgraph of $G$, and having the property that for all distinct $u,v \in V(H)$, there is an edge between $X_u$ and $X_v$ in $G$ if and only if $uv \in E(H)$.
Note that $H$ is an induced minor of $G$ if and only if $G$ admits an induced minor model of $H$.

Given a graph $G$, a vertex $x \in V(G)$, and a set $Y \subseteq V(G) \setminus \{x\}$, we say that $x$ is \emph{complete} (resp.\ \emph{anticomplete}) to $Y$ in $G$ if $x$ is adjacent (resp.\ nonadjacent) to all vertices in $Y$. Given disjoint sets $A,B \subseteq V(G)$, we say that $A$ is \emph{complete} (resp.\ \emph{anticomplete}) to $B$ in $G$ if every vertex in $A$ is complete (resp.\ anticomplete) to $B$ in $G$.
A \emph{universal vertex} of a graph $G$ is a vertex $u$ such that $N_G[u] = V(G)$.
\emph{Adding a universal vertex} to a graph $G$ means adding a new vertex $v$ and making it adjacent to all vertices of $G$; note that $v$ is a universal vertex in the resulting graph.

A \emph{cutset} of a graph $G$ is a (possibly empty) set $C \subseteq V(G)$ such that $G \setminus C$ is disconnected. A \emph{minimal cutset} of a graph $G$ is a cutset $C$ of $G$ such that no proper subset of $C$ is a cutset of $G$. A \emph{clique-cutset} is a cutset that is a clique. Note that $\emptyset$ is a clique-cutset of any disconnected graph.

A \emph{cut-partition} of a graph $G$ is a partition $(A,B,C)$ of $V(G)$ such that $A$ and $B$ are nonempty and anticomplete to each other (the set $C$ may possibly be empty).
Note that if $(A,B,C)$ is a cut-partition of $G$, then $C$ is a cutset of $G$; conversely, every cutset of $G$ gives rise to at least one cut-partition of $G$.

If $H_1$ and $H_2$ are graphs on disjoint vertex sets, the \emph{disjoint union} of $H_1$ and $H_2$, denoted by $H_1 \cup H_2$, is the graph with vertex set $V(H_1) \cup V(H_2)$ and edge set $E(H_1) \cup E(H_2)$.
More generally, the operation of \emph{gluing $H_1$ and $H_2$ along a clique} produces a graph obtained from  $H_1 \cup H_2$ by choosing cliques $C_1$ in $H_1$ and $C_2$ in $H_2$ such that $|C_1| = |C_2|$,  fixing a bijection $f$ from $C_1$ to $C_2$, and identifying each vertex $v\in C_1$ with the vertex $f(v)$.

Given a graph $G$ and a vertex weight function $w:V\to\mathbb{Q}_+$, the \textsc{Maximum Weight Clique} problem is the problem of computing a clique $C$ in $G$ with maximum total weight, where the weight of a set $X\subseteq V(G)$ is defined as $\sum_{x\in X}w(x)$.
Similarly, the \textsc{Maximum Weight Stable Set} problem is the problem of computing a stable set $S$ in $G$ with maximum total weight.
In the case of weight functions constantly equal to $1$, we obtain the \textsc{Maximum Clique} and \textsc{Maximum Stable Set} problems, respectively.
A graph $G=(V,E)$ is \emph{$k$-colorable} if $V$ is a union of $k$ stable sets in $G$.
\textsc{Vertex Coloring} is the following problem: given a graph $G$, compute its \emph{chromatic number} $\chi(G)$, that is, the smallest integer $k$ such that $G$ is $k$-colorable.
For a positive integer $k$, \textsc{$k$-Coloring} is the following decision problem: given a graph $G$, determine whether $G$ is $k$-colorable.

\section{Basic properties} \label{sec:basic}

Recall that for any graph class $\mathcal{C}$, we denote by $\mathcal{G}_{\mathcal{C}}$ the class of all graphs $G$ such that every minimal separator of $G$ induces a graph from $\mathcal{C}$.
Note that if $\mathcal{G}_{\mathcal{C}}$ is hereditary, then $\mathcal{C}\subseteq \mathcal{G}_{\mathcal{C}}$.
The following proposition provides some equivalent characterizations of the class $\mathcal{G}_{\mathcal{C}}$ for the case when ${\mathcal{C}}$ is hereditary.

\begin{proposition} \label{prop-sep-cut-equiv} Let $\mathcal{C}$ be a hereditary class of graphs, and let $G$ be a graph. Then the following are equivalent:
\begin{itemize}
\item[(a)] $G \in \mathcal{G}_{\mathcal{C}}$;
\item[(b)] for all induced subgraphs $H$ of $G$, every minimal separator $S$ of $H$ satisfies $H[S] \in \mathcal{C}$;
\item[(c)] for all induced subgraphs $H$ of $G$, every minimal cutset $C$ of $H$ satisfies $H[C] \in \mathcal{C}$.
\end{itemize}
\end{proposition}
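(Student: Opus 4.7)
The plan is to establish (b) $\Rightarrow$ (a), (a) $\Rightarrow$ (b), and (b) $\Leftrightarrow$ (c). The implication (b) $\Rightarrow$ (a) is immediate from the choice $H=G$, so the substance of the first equivalence is (a) $\Rightarrow$ (b): I must show that $\mathcal{G}_{\mathcal{C}}$ is hereditary whenever $\mathcal{C}$ is. By induction on the number of deleted vertices, it suffices to prove the one-step reduction: if $G \in \mathcal{G}_{\mathcal{C}}$ and $v \in V(G)$, then $G \setminus v \in \mathcal{G}_{\mathcal{C}}$.

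For this reduction, I would fix a minimal $(a,b)$-separator $S$ of $G \setminus v$ and split on whether $S$ already separates $a$ from $b$ in $G$. If it does, then $S$ is in fact a minimal $(a,b)$-separator of $G$: any proper subset $S' \subsetneq S$ that separated $a$ from $b$ in $G$ would, being disjoint from $v$, also separate them in $G \setminus v$, contradicting minimality of $S$ there. Hence $G[S] \in \mathcal{C}$ by (a). Otherwise, every $(a,b)$-path in $G$ avoiding $S$ passes through $v$, so $S \cup \{v\}$ is an $(a,b)$-separator of $G$. Noting that $G \setminus (S \cup \{v\}) = (G \setminus v) \setminus S$, the components of $a$ and $b$ in $G \setminus (S \cup \{v\})$ are exactly the sets $A,B$ from $(G \setminus v) \setminus S$; every vertex of $S$ has a neighbor in each of $A$ and $B$ by minimality of $S$ in $G \setminus v$, and $v$ has a neighbor in each because it lies on some $(a,b)$-path in $G \setminus S$. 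By the standard characterization of minimal separators, $S \cup \{v\}$ is then a minimal $(a,b)$-separator of $G$, so $G[S \cup \{v\}] \in \mathcal{C}$; hereditariness of $\mathcal{C}$ yields $(G \setminus v)[S] = G[S] \in \mathcal{C}$.

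For (b) $\Leftrightarrow$ (c), I would first observe that every minimal cutset $C$ of an induced subgraph $H$ is automatically a minimal separator of $H$: picking $a,b$ in distinct components of $H \setminus C$ makes $C$ an $(a,b)$-separator, and any proper subset separating $a$ from $b$ would still disconnect $H$, contradicting minimality of $C$ as a cutset. This gives (b) $\Rightarrow$ (c). Conversely, given a minimal $(a,b)$-separator $S$ of an induced subgraph $H$ of $G$, I let $A$ and $B$ be the components of $a$ and $b$ in $H \setminus S$ and pass to $H' := H[A \cup S \cup B]$, which is still an induced subgraph of $G$. Then $H' \setminus S = H[A] \cup H[B]$ is disconnected, so $S$ is a cutset of $H'$; and for any $S' \subsetneq S$, each vertex of $S \setminus S'$ still has (inherited from $H$) a neighbor in both $A$ and $B$, so $H' \setminus S'$ is connected. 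Hence $S$ is a minimal cutset of $H'$, and (c) yields $H'[S] = H[S] \in \mathcal{C}$.

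The step requiring genuine care is the case split in (a) $\Rightarrow$ (b): the main obstacle is to ensure that a minimal separator of $G \setminus v$ always lifts to a minimal separator of $G$ itself, either as $S$ directly or after adjoining $v$, so that the hypothesis on $G$ and the hereditariness of $\mathcal{C}$ can be invoked together.
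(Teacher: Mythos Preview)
Your argument is correct. The organization differs slightly from the paper's: the paper proves the cycle (a) $\Rightarrow$ (b) $\Rightarrow$ (c) $\Rightarrow$ (a), whereas you prove (a) $\Leftrightarrow$ (b) and (b) $\Leftrightarrow$ (c). Your (b) $\Rightarrow$ (c) and (c) $\Rightarrow$ (b) are essentially identical to the paper's (b) $\Rightarrow$ (c) and (c) $\Rightarrow$ (a), respectively (passing to the induced subgraph $H' = H[A \cup S \cup B]$ and checking that $S$ is a minimal cutset there).

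The one genuine methodological difference is in (a) $\Rightarrow$ (b). You proceed by induction on the number of deleted vertices, with a two-case analysis depending on whether the minimal separator $S$ of $G \setminus v$ already separates in $G$ or needs $v$ adjoined. The paper instead handles an arbitrary induced subgraph $H$ in one shot: given a minimal $(a,b)$-separator $S$ of $H$, it observes that $S \cup (V(G) \setminus V(H))$ is an $(a,b)$-separator of $G$, shrinks it to a minimal one $S^*$, and then argues that $S^* \cap V(H) = S$, so $S \subseteq S^*$ and hereditariness of $\mathcal{C}$ finishes. The paper's route is a bit slicker (no case split, no induction), but your inductive approach is perfectly valid and arguably more hands-on; both rely in the same essential way on hereditariness of $\mathcal{C}$ to pass from a minimal separator of $G$ containing $S$ down to $S$ itself.
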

\begin{proof}
We prove the result by showing that (a) implies (b), that (b) implies (c), and that (c) implies (a).

First, we assume that (a) holds, and we prove (b). Let $H$ be an induced subgraph of $G$, and suppose that $a,b \in V(H)$ are distinct, nonadjacent vertices, and that $S$ is a minimal $(a,b)$-separator of $H$. Then $S \cup (V(G) \setminus V(H))$ is an $(a,b)$-separator of $G$. Let $S^* \subseteq S \cup (V(G) \setminus V(H))$ be a minimal $(a,b)$-separator of $G$; by (a), $G[S^*] \in \mathcal{C}$. Since $S^*$ is an $(a,b)$-separator of $G$, we have that $S^* \cap V(H)$ is an $(a,b)$-separator of $H$. Moreover, $S^* \cap V(H) \subseteq S$, and so the minimality of $S$ guarantees that $S^* \cap V(H) = S$; consequently, $S \subseteq S^*$. Since $G[S^*] \in \mathcal{C}$, and since $\mathcal{C}$ is hereditary, it follows that $G[S] \in \mathcal{C}$. Clearly, $G[S] = H[S]$, and so $H[S] \in \mathcal{C}$. Thus, (b) holds.

Next, we assume that (b) holds, and we prove (c). Let $H$ be an induced subgraph of $G$, and suppose that $C$ is a minimal cutset of $H$. Let $A$ and $B$ be the vertex sets of two distinct components of $H \setminus C$. The minimality of $C$ guarantees that every vertex in $C$ has a neighbor both in $A$ and in $B$, and this, in turn, guarantees that for all $a \in A$ and $b \in B$, $C$ is a minimal $(a,b)$-separator of $H$. But now (b) implies that $H[C] \in \mathcal{C}$. Thus, (c) holds.

Finally, we assume that (c) holds, and we prove (a). Suppose that $a,b \in V(G)$ are distinct, nonadjacent vertices, and that $S$ is a minimal $(a,b)$-separator of $G$. Let $A$ (resp.\ $B$) be the vertex set of the component of $G \setminus S$ that contains $a$ (resp.\ $b$). Clearly, $A$ and $B$ are disjoint and anticomplete to each other. Furthermore, the minimality of $S$ implies that every vertex in $S$ has a neighbor both in $A$ and in $B$. Set $H := G[A \cup B \cup S]$. Then $(A,B,S)$ is a cut-partition of $H$; furthermore, since $H[A]$ and $H[B]$ are connected, and every vertex of $S$ has a neighbor both in $A$ and in $B$, we see that $S$ is a minimal cutset of $H$. Now (c) guarantees that $H[S] \in \mathcal{C}$; since $H[S] = G[S]$, it follows that $G[S] \in \mathcal{C}$. Thus, (a) holds.
\end{proof}

\begin{corollary} \label{prop-GC-hereditary} 
Let $\mathcal{C}$ be a hereditary class of graphs.
Then $\mathcal{G}_{\mathcal{C}}$ is hereditary.
\end{corollary}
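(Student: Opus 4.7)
The plan is to derive the corollary almost immediately from \cref{prop-sep-cut-equiv}, using the equivalence of conditions (a) and (b). The key observation is that condition (b) is stated in terms of all induced subgraphs of $G$, so it is automatically inherited by every induced subgraph.

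More concretely, suppose $G \in \mathcal{G}_{\mathcal{C}}$ and let $G'$ be an induced subgraph of $G$. We want to show $G' \in \mathcal{G}_{\mathcal{C}}$. By \cref{prop-sep-cut-equiv}, applied to $G$, condition (b) holds for $G$: every minimal separator $S$ of every induced subgraph $H$ of $G$ satisfies $H[S] \in \mathcal{C}$. Now, since every induced subgraph of $G'$ is also an induced subgraph of $G$ (induced-subgraph containment is transitive), condition (b) holds for $G'$ as well. Applying \cref{prop-sep-cut-equiv} once more, this time in the direction (b) $\Rightarrow$ (a) for $G'$, we conclude that $G' \in \mathcal{G}_{\mathcal{C}}$.

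There is essentially no obstacle here: the proof is a one-line deduction once \cref{prop-sep-cut-equiv} is available. The real work was packed into that proposition, whose whole point is to state closure under induced subgraphs explicitly inside the characterizing condition, so that heredity becomes a trivial consequence. One could alternatively give a direct argument not using (b), by taking a minimal $(a,b)$-separator $S$ of an induced subgraph $G'$, extending it to a minimal $(a,b)$-separator $S^*$ of $G$ contained in $S \cup (V(G) \setminus V(G'))$, observing $S \subseteq S^*$, and using heredity of $\mathcal{C}$ to pass from $G[S^*] \in \mathcal{C}$ to $G[S] = G'[S] \in \mathcal{C}$ — but this is exactly the argument already used in the (a) $\Rightarrow$ (b) step of \cref{prop-sep-cut-equiv}, so invoking the proposition is cleaner.
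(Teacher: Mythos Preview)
Your proof is correct and matches the paper's approach exactly: the paper's proof of this corollary is a single sentence invoking the equivalence of (a) and (b) in \cref{prop-sep-cut-equiv}, which is precisely what you do. Your additional remark about the direct alternative argument being the same as the (a)$\Rightarrow$(b) step is accurate and shows you understand why the proposition was set up that way.
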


\begin{proof}
This readily follows from \cref{prop-sep-cut-equiv}, and more precisely, from the equivalence of (a) and (b) from \cref{prop-sep-cut-equiv}.
\end{proof}

\begin{theorem} \label{thm-GC-clique-gluing}
Let $\mathcal{C}$ be a hereditary class of graphs that contains all complete graphs.\footnote{However, not all graphs in $\mathcal{C}$ need be complete.}
Then, $\mathcal{G}_{\mathcal{C}}$ is closed under gluing along a clique.
\end{theorem}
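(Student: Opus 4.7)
The plan is to take an arbitrary minimal $(a,b)$-separator $S$ of $G$ (where $G$ is obtained by gluing $H_1, H_2 \in \mathcal{G}_{\mathcal{C}}$ along a clique $K$, so $V(H_1)\cap V(H_2)=K$ and $V(H_1)\setminus K$ is anticomplete to $V(H_2)\setminus K$ in $G$) and to show that either $S=K$, or $S$ is fully contained in $V(H_i)$ for some $i \in \{1,2\}$ and is a minimal separator of $H_i$. In the first case $G[S]$ is a clique and hence in $\mathcal{C}$ by hypothesis; in the second, $G[S]=H_i[S]\in\mathcal{C}$ since $H_i\in\mathcal{G}_\mathcal{C}$. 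The whole argument is driven by one key observation: since $K$ is a clique of $G$, the set $K\setminus S$ is a clique in $G\setminus S$, so it is contained in at most one component of $G\setminus S$. Let $A,B$ be the components of $G\setminus S$ containing $a,b$ respectively.

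If $K\subseteq S$, then $V(H_1)\setminus S$ and $V(H_2)\setminus S$ are anticomplete in $G$, so each of $A,B$ lies entirely in one side of the gluing. If $A$ and $B$ lie on opposite sides, every $s\in S$ must simultaneously have a neighbor in $V(H_1)\setminus K$ and in $V(H_2)\setminus K$, forcing $s\in V(H_1)\cap V(H_2)=K$; hence $S=K$, a clique. If instead both $A,B$ lie in, say, $V(H_1)$, then the symmetric ``wrong-side neighbor'' argument forces $S\cap(V(H_2)\setminus K)=\emptyset$, so $S\subseteq V(H_1)$. Because components of $G\setminus S$ inside $V(H_1)\setminus S$ coincide with components of $H_1\setminus S$ there, $S$ is easily checked to be a minimal $(a,b)$-separator of $H_1$.

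If $K\setminus S\neq\emptyset$, WLOG $K\setminus S\subseteq A$, so $B\cap K=\emptyset$. Since any path in $G\setminus S$ between $V(H_1)\setminus K$ and $V(H_2)\setminus K$ must cross $K$, the component $B$ lies entirely in one of $V(H_1)\setminus K$ or $V(H_2)\setminus K$; say $B\subseteq V(H_1)$. The same wrong-side argument gives $S\cap(V(H_2)\setminus K)=\emptyset$, so $S\subseteq V(H_1)$. Fix any $a'\in K\setminus S$; I claim $S$ is a minimal $(a',b)$-separator of $H_1$. That $S$ separates $a'$ from $b$ in $H_1$ follows from them being in different components of $G\setminus S\supseteq H_1\setminus S$. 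For minimality, each $s\in S$ needs an $H_1$-neighbor in both the $a'$- and $b$-components of $H_1\setminus S$; the $b$-side is immediate since $B\subseteq V(H_1)$. For the $a'$-side, any $s\in V(H_1)\setminus K$ has its $G$-neighbor in $A$ already in $V(H_1)$, and its connection back to $a'$ is obtained by \emph{shortcutting} any path segment that enters $V(H_2)\setminus K$ at some $u\in K\setminus S$ and exits at some $u'\in K\setminus S$ by the edge $uu'\in E(H_1)$; and any $s\in K\cap S$ is $G$-adjacent to all of $K\setminus S$, which lies in the $a'$-component of $H_1\setminus S$.

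The main obstacle is the second case: the vertex $a$ being ``separated'' from $b$ in $G$ may itself lie on the $V(H_2)$-side, so to invoke $H_1\in\mathcal{G}_{\mathcal{C}}$ one must substitute an auxiliary vertex $a'\in K\setminus S$ and re-verify minimality inside $H_1$. The clique $K$ is precisely the tool that makes this substitution work, both for the shortcut argument and for ensuring that vertices of $K\cap S$ ``touch'' the $a'$-side in $H_1\setminus S$.
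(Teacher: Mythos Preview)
Your argument is correct in spirit and takes a different organizational route from the paper. The paper cases on where the two separated vertices lie relative to the clique-cutset (both on one side versus opposite sides); in each case it passes to a minimal separator $S' \subseteq S$ of the relevant side $G_A$ and argues by path analysis that $S'=S$. You instead case on whether $K \subseteq S$, driven by the clean observation that $K \setminus S$ lies in at most one component of $G \setminus S$, and then show directly that $S$ itself is a minimal separator of $H_i$ (for the auxiliary pair $(a',b)$ with $a' \in K \setminus S$ when needed), without passing through a sub-separator. Both proofs ultimately use the same ``shortcut through the clique'' trick for rerouting paths.

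There is one genuine gap. Your ``WLOG $K \setminus S \subseteq A$'' is not justified: the $a \leftrightarrow b$ symmetry only guarantees that $K \setminus S$ avoids \emph{one} of the components $A,B$---say $B$---but $K \setminus S$ may lie in a third component $D$ of $G \setminus S$ disjoint from both. In that situation your $(a',b)$-minimality argument fails on the $a'$-side for any $s \in S \cap (V(H_1)\setminus K)$: the $G$-neighbor of $s$ in $A$ cannot be connected to $a' \in D$ inside $H_1 \setminus S$, since $A$ and $D$ are already different components of $G \setminus S$. The fix is immediate using ideas you already have: if $K \setminus S \subseteq D \notin \{A,B\}$ then $A \cap K = \emptyset$ as well, so \emph{both} $A$ and $B$ lie entirely on one side of the gluing, and your Case~1 reasoning applies verbatim (either $S \subseteq K$ if $A,B$ are on opposite sides, or $S$ is a minimal $(a,b)$-separator of $H_1$ if they are on the same side).
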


\begin{proof}
Let $G$ be a graph that admits a clique-cutset $C$ and let $(A,B,C)$ be an associated cut-partition of $G$.
Assume that $G_A := G[A \cup C]$ and $G_B := G[B \cup C]$ both belong to $\mathcal{G}_{\mathcal{C}}$.
It suffices to show that $G \in \mathcal{G}_{\mathcal{C}}$.

Fix a pair of distinct, nonadjacent vertices $x,y \in V(G)$, and let $S$ be a minimal $(x,y)$-separator of $G$. We must show that $G[S] \in \mathcal{C}$. Since $C$ is a clique, it contains at most one of $x,y$; by symmetry, we may therefore assume that $x \in A$. We now consider two cases: when $y \in A \cup C$, and when $y \in B$.

\medskip

\noindent\textbf{Case~1:} $y \in A \cup C$. Then $S \cap (A \cup C)$ is an $(x,y)$-separator of $G_A$; let $S' \subseteq S \cap (A \cup C)$ be a minimal $(x,y)$-separator of $G_A$. Since $G_A \in \mathcal{G}_{\mathcal{C}}$, we see that $G_A[S'] = G[S']$ belongs to $\mathcal{C}$. If $S' = S$, then we are done. So, assume that $S' \subsetneqq S$. The minimality of $S$ then implies that there is a path $p_1,\dots,p_s$ in $G \setminus S'$, with $p_1 = x$ and $p_s = y$. Since $S'$ is an $(x,y)$-separator of $G_A$, we see that at least one vertex of the path $p_1,\dots,p_s$ belongs to $B$. Let $i$ be the smallest index in $\{1,\dots,s\}$ such that $p_i \in B$, and let $j$ be the largest index in $\{1,\dots,s\}$ such that $p_j \in B$. Since $p_1,p_s \in A \cup C$, we have that $2 \leq i \leq j \leq s-1$. Moreover, since $A$ is anticomplete to $B$, we have that $p_{i-1},p_{j+1} \in C$; since $C$ is a clique, we see that $p_{i-1},p_{j+1}$ are adjacent. But now $p_1,\dots,p_{i-1},p_{j+1},\dots,p_s$ is a path between $x$ and $y$ in $G_A \setminus S'$, contrary to the fact that $S'$ is an $(x,y)$-separator of $G_A$.

\medskip

\noindent\textbf{Case~2:} $y \in B$. Note that, in this case, $C$ is an $(x,y)$-separator of $G$.
\medskip

\noindent
\textbf{Claim.} At least one of $S \cap A$ and $S \cap B$ is empty.

\medskip

\noindent\emph{Proof of the Claim.} Suppose otherwise, i.e., that both $S \cap A$ and $S \cap B$ are nonempty. Set $S_A := S \setminus B$ and $S_B := S \setminus A$. By the minimality of $S$, there is a path $p_1,\dots,p_s$, with $p_1 = x$ and $p_s = y$, in $G \setminus S_A$, and there is a path $q_1,\dots,q_t$, with $q_1 = x$ and $q_t = y$, in $G \setminus S_B$. Since $p_1 \in A$ and $p_s \in B$, and since $A$ is anticomplete to $B$, some internal vertex of the path $p_1,\dots,p_s$ belongs to $C$; let $i$ be the smallest index in $\{2,\dots,s-1\}$ such that $p_i \in C$ (then $p_1,\dots,p_{i-1} \in A$). Similarly, at least one internal vertex of the path $q_1,\dots,q_t$ belongs to $C$; let $j$ be the largest index in $\{2,\dots,t-1\}$ such that $q_j \in C$ (then $q_{j+1},\dots,q_t \in B$). Since $C$ is a clique, we see that $p_i$ and $q_j$ are either equal or adjacent. In the former case, $p_1,\dots,p_i,q_{j+1},\dots,q_t$ is a path between $x$ and $y$ in $G \setminus S$; and in the latter case, $p_1,\dots,p_i,q_j,\dots,q_t$ is a path between $x$ and $y$ in $G \setminus S$. But neither outcome is possible, since $S$ is an $(x,y)$-separator of $G$. This proves the Claim. \hfill$\blacklozenge$

\medskip

By the Claim, and by symmetry, we may assume that $S \cap B = \emptyset$, i.e., $S \subseteq A \cup C$. Let $Y$ be the vertex set of the component of $G[B]$ that contains $y$.

Suppose first that $C \setminus S$ is anticomplete to $Y$. Then $C \cap S$ is an $(x,y)$-separator of $G$, and so the minimality of $S$ guarantees that $S \subseteq C$. Thus, $S$ is a clique; since $\mathcal{C}$ contains all complete graphs, it follows that $G[S] \in \mathcal{C}$, and we are done.

From now on, we assume that $C \setminus S$ is not anticomplete to $Y$. Fix a vertex $c \in C \setminus S$ that has a neighbor in $Y$.
Since $y \in Y$, and $G[Y]$ is connected, the graph $G$ contains a path $q_1,\dots,q_t$, with $q_1 = c$, $q_t = y$, and $q_2,\dots,q_t \in Y$ (so, $q_2,\dots,q_t \in B)$. Now, suppose that there is a path $p_1,\dots,p_s$ in $G_A \setminus S$, with $p_1 = x$ and $p_s = c$.
Then $p_1,\dots,p_s,q_2,\dots,q_t$ is a path in $G \setminus S$ between $x$ and $y$, contrary to the fact that $S$ is an $(x,y)$-separator of $G$. So, $S$ is an $(x,c)$-separator of $G_A$. Let $S' \subseteq S$ be a minimal $(x,c)$-separator of $G_A$; since $G_A \in \mathcal{G}_{\mathcal{C}}$, we see that $G_A[S'] = G[S']$ belongs to ${\mathcal{C}}$.
If $S'$ is an $(x,y)$-separator of $G$, then the minimality of $S$ guarantees that $S = S'$, and we are done. So, assume that $S'$ is not an $(x,y)$-separator of $G$. Then there is a path $r_1,\dots,r_k$ in $G \setminus S'$, with $r_1 = x$ and $r_k = y$. Since $x \in A$, $y \in B$, and $A$ is anticomplete to $B$, we see that some internal vertex of $r_1,\dots,r_k$ belongs to $C$; let $i$ be the smallest index  in $\{2,\dots,k-1\}$ such that $r_i \in C$. Since $r_i,c \in C$, and $C$ is a clique, we see that $r_i$ and $c$ are either equal or adjacent. In the former case, $r_1,\dots,r_i$ is a path from $x$ to $c$ in $G_A \setminus S'$, and in the latter case, $r_1,\dots,r_i,c$ is a path from $x$ to $c$ in $G_A \setminus S'$. But neither outcome is possible, since $S'$ is an $(x,c)$-separator of $G_A$.
\end{proof}

Theorem~\ref{thm-GC-clique-gluing} implies the following result on the classes of graphs in which every minimal separator is a union of $k$ cliques.

\begin{corollary} \label{cor-G2-clique-gluing}
For every positive integer $k$, the class $\mathcal{G}_{k}$ is closed under gluing along a clique.
\end{corollary}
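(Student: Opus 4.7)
The plan is to derive this as an immediate application of \cref{thm-GC-clique-gluing} to the class $\mathcal{C} = \mathcal{C}_k$ of graphs whose vertex set partitions into $k$ (possibly empty) cliques, using the identity $\mathcal{G}_k = \mathcal{G}_{\mathcal{C}_k}$ noted in the introduction. To do this I need to check the two hypotheses of \cref{thm-GC-clique-gluing}, namely that $\mathcal{C}_k$ is hereditary and that $\mathcal{C}_k$ contains all complete graphs.

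The second hypothesis is immediate: if $K$ is a complete graph, then writing $V(K) = V(K) \cup \emptyset \cup \cdots \cup \emptyset$ expresses $V(K)$ as a union of $k$ cliques (here we use $k\ge 1$), so $K \in \mathcal{C}_k$. For the first hypothesis, suppose $G \in \mathcal{C}_k$ with $V(G) = Q_1 \cup \cdots \cup Q_k$ where each $Q_i$ is a clique, and let $H$ be an induced subgraph of $G$ on vertex set $X \subseteq V(G)$. Then $X = (Q_1 \cap X) \cup \cdots \cup (Q_k \cap X)$, and each $Q_i \cap X$ is a clique in $H$, so $H \in \mathcal{C}_k$.

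With both hypotheses verified, \cref{thm-GC-clique-gluing} yields that $\mathcal{G}_{\mathcal{C}_k} = \mathcal{G}_k$ is closed under gluing along a clique, which is exactly the statement of the corollary. There is no real obstacle here; the content is already carried by \cref{thm-GC-clique-gluing}, and the corollary is essentially a matter of checking that $\mathcal{C}_k$ satisfies the abstract hypotheses of that theorem.
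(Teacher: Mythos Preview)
Your proof is correct and matches the paper's own argument essentially verbatim: apply \cref{thm-GC-clique-gluing} with $\mathcal{C}=\mathcal{C}_k$, using $\mathcal{G}_k=\mathcal{G}_{\mathcal{C}_k}$ and the easy checks that $\mathcal{C}_k$ is hereditary and contains all complete graphs.
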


\begin{proof}
This follows immediately from \cref{thm-GC-clique-gluing}, the fact that $\mathcal{G}_k = \mathcal{G}_{\mathcal{C}_k}$, and the fact that the class ${\mathcal{C}_k}$ is a hereditary graph class containing all complete graphs.
\end{proof}

Note that \cref{cor-G2-clique-gluing} fails for $\mathcal{G}_0$: the two-edge path $P_3$ is an obvious counterexample.

\section{Forbidden induced minors} \label{sec:ForbIndMinor}

For a class of graphs $\mathcal{C}$, let us denote by $\mathcal{M}_{\mathcal{C}}$ the class of all graphs that do not belong to $\mathcal{C}$, but all of whose proper induced minors do belong to $\mathcal{C}$. 
Note that nonisomorphic graphs in $\mathcal{M}_{\mathcal{C}}$ are incomparable under the induced minor relation. 
If the class $\mathcal{C}$ is closed under induced minors, then clearly, $\mathcal{C}$ is precisely the class of all $\mathcal{M}_{\mathcal{C}}$-induced-minor-free graphs.
In this case, we refer to graphs in $\mathcal{M}_{\mathcal{C}}$ as the \emph{minimal forbidden induced minors} for the class $\mathcal{C}$.
More generally, if $\mathcal{M}$ is a class of graphs such that every graph in $\mathcal{C}$ is $\mathcal{M}$-induced-minor-free, we refer to graphs in $\mathcal{M}$ as \emph{forbidden induced minors} for the class $\mathcal{C}$.

We now consider hereditary graph classes that are closed under edge addition. 
We show that for any such class $\mathcal{C}$, both the class $\mathcal{C}$ and the corresponding class $\mathcal{G}_\mathcal{C}$ are closed under induced minors, and we characterize the class $\mathcal{G}_\mathcal{C}$ in terms of forbidden induced minors. 
For graphs $H_1$ and $H_2$ on disjoint vertex sets, the {\em complete join} of $H_1$ and $H_2$, denoted by $H_1 \vee H_2$, is the graph with vertex set $V(H_1) \cup V(H_2)$ and edge set $E(H_1) \cup E(H_2) \cup \{v_1v_2 \mid v_1 \in V(H_1), v_2 \in V(H_2)\}$.

\begin{theorem} \label{thm:GC-induced-minors}
Let $\mathcal{C}$ be a hereditary class of graphs, closed under edge addition. Then both $\mathcal{C}$ and $\mathcal{G}_{\mathcal{C}}$ are closed under induced minors, and $\mathcal{G}_{\mathcal{C}}$ is precisely the class of all $\{2K_1 \vee H \mid H \in \mathcal{M}_{\mathcal{C}}\}$-induced-minor-free graphs.
\end{theorem}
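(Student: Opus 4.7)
The plan is to proceed in three steps: closure of $\mathcal{C}$ under induced minors, closure of $\mathcal{G}_{\mathcal{C}}$ under induced minors, and the forbidden induced minor characterization of $\mathcal{G}_{\mathcal{C}}$.

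For closure of $\mathcal{C}$, since $\mathcal{C}$ is hereditary, only edge contraction is at issue. Given $G \in \mathcal{C}$ and $xy \in E(G)$, I would form $G^+$ from $G$ by adding the missing edges $xz$ and $yz$ for each $z \in (N_G(x) \cup N_G(y)) \setminus \{x,y\}$, making $x$ and $y$ \emph{true twins} in $G^+$ (so $N_{G^+}[x] = N_{G^+}[y]$). Closure of $\mathcal{C}$ under edge addition gives $G^+ \in \mathcal{C}$, and since $G/xy \cong G^+ - y$ (via identifying $v^{xy}$ with $x$), hereditariness gives $G/xy \in \mathcal{C}$.

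For closure of $\mathcal{G}_{\mathcal{C}}$, closure under vertex deletion is \cref{prop-GC-hereditary}. For edge contraction, let $G \in \mathcal{G}_{\mathcal{C}}$, $xy \in E(G)$, $G' := G/xy$, and $v = v^{xy}$. Fix a minimal $(a',b')$-separator $S'$ of $G'$; we must show $G'[S'] \in \mathcal{C}$. If $v \notin S'$, I would lift $a',b'$ back to $\tilde a, \tilde b \in V(G)$ (replacing $v$ by $x$ or $y$ if needed) and show via a lift-and-project argument on paths that $S' \subseteq V(G)$ is itself a minimal $(\tilde a,\tilde b)$-separator of $G$; since $G'[S'] = G[S']$, we are done. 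If $v \in S'$, set $S := (S'\setminus\{v\})\cup\{x,y\}$ and take a minimal $(a',b')$-separator $S^* \subseteq S$ of $G$. Projecting $S^*$ back to $G'$ (replacing $x,y$ by $v$ whenever $S^* \cap \{x,y\} \neq \emptyset$) produces an $(a',b')$-separator of $G'$ contained in $S'$, so the minimality of $S'$ forces $S' \setminus \{v\} \subseteq S^*$ and $S^* \cap \{x,y\} \neq \emptyset$. A direct comparison of $G[S^*] \in \mathcal{C}$ with $G'[S']$ then shows that $G'[S']$ arises from $G[S^*]$ either by contracting the edge $xy$ (when $\{x,y\} \subseteq S^*$) or by adding edges at one vertex (when $|S^* \cap \{x,y\}| = 1$); closure of $\mathcal{C}$ under induced minors and under edge addition yields $G'[S'] \in \mathcal{C}$.

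For the characterization, the forward direction uses closure: if $2K_1 \vee H$ were an induced minor of some $G \in \mathcal{G}_{\mathcal{C}}$ with $H \in \mathcal{M}_{\mathcal{C}}$, then $2K_1 \vee H \in \mathcal{G}_{\mathcal{C}}$, but in $2K_1 \vee H$ the set $V(H)$ is easily seen to be a minimal separator between the two vertices of the $2K_1$ (each vertex of $H$ is adjacent to both), which would force $H \in \mathcal{C}$, contradicting $H \in \mathcal{M}_{\mathcal{C}}$. For the converse I argue by contrapositive: given $G \notin \mathcal{G}_{\mathcal{C}}$ with nonadjacent $a, b$ and a minimal $(a,b)$-separator $S$ with $G[S] \notin \mathcal{C}$, closure of $\mathcal{C}$ under induced minors lets me pick $H \in \mathcal{M}_{\mathcal{C}}$ which is an induced minor of $G[S]$, via some model $\{Y_u\}_{u \in V(H)}$ in $G[S]$. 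Letting $A$ and $B$ be the vertex sets of the components of $G \setminus S$ containing $a$ and $b$, the family $\{A, B\} \cup \{Y_u\}_{u \in V(H)}$ is an induced minor model of $2K_1 \vee H$ in $G$, since $A$ and $B$ are connected and anticomplete, and minimality of $S$ forces every vertex of $S$ (hence every $Y_u$) to have a neighbor in both $A$ and $B$. The main obstacle I anticipate is the case $v \in S'$ in the closure argument for $\mathcal{G}_{\mathcal{C}}$: one must carefully control how the minimal refinement $S^*$ meets $\{x,y\}$ so as to express $G'[S']$ as the result of an admissible operation on $G[S^*]$; once that is in place, everything else follows cleanly from \cref{prop-sep-cut-equiv}, \cref{prop-GC-hereditary}, and the hypotheses on $\mathcal{C}$.
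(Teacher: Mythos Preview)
Your proposal is correct and follows essentially the same approach as the paper: closure of $\mathcal{C}$ via ``add edges, then delete a twin'' (the paper deletes first and then adds edges, which is the same idea), closure of $\mathcal{G}_{\mathcal{C}}$ by lifting a minimal separator of $G/xy$ to $G$ and comparing the induced subgraphs, and the characterization via the obvious minimal separator $V(H)$ in $2K_1\vee H$ together with an induced minor model built from the two full components. The only cosmetic difference is that in the case $v\in S'$ you split into the subcases $\{x,y\}\subseteq S^*$ (handled by contraction, using the just-proved closure of $\mathcal{C}$ under induced minors) and $|S^*\cap\{x,y\}|=1$ (handled by edge addition), whereas the paper treats both at once by passing to $G[S^*\setminus\{y\}]$ and adding edges at $x$; both routes are valid.
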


\begin{proof}
Let $\mathcal{C}$ be a hereditary class of graphs, closed under edge addition.
First, we prove that $\mathcal{C}$ is closed under induced minors. 
It suffices to show that $\mathcal{C}$ is closed under vertex deletion and edge contraction. The former follows immediately from the fact that $\mathcal{C}$ is hereditary. Let us prove the latter. Fix a graph $G \in \mathcal{C}$, and fix an edge $xy \in E(G)$. Then $G/xy$ is isomorphic to the graph obtained from $G$ by first deleting $y$, and then adding edges between $x$ and all vertices in $N_G[y] \setminus N_G[x]$; since $\mathcal{C}$ is closed under vertex deletion and edge addition, it follows that $G/xy$ belongs to $\mathcal{C}$. So, $\mathcal{C}$ is closed under edge contraction. This proves that $\mathcal{C}$ is closed under induced minors. 

By Corollary~\ref{prop-GC-hereditary}, $\mathcal{G}_{\mathcal{C}}$ is hereditary. So, in order to prove that $\mathcal{G}_{\mathcal{C}}$ is closed under induced minors, it suffices to show that $\mathcal{G}_{\mathcal{C}}$ is closed under edge contractions. Fix $G \in \mathcal{G}_{\mathcal{C}}$, let $xy$ be an edge of $G$, and set $G' := G/xy$; the vertex of $G'$ to which the edge $xy$ is contracted will be denoted by $v^{xy}$. We must show that $G' \in \mathcal{G}_{\mathcal{C}}$, i.e., that for any minimal separator $S$ of $G'$, we have that $G'[S] \in \mathcal{C}$.

We first deal with minimal separators of $G'$ that contain $v^{xy}$. So, suppose that $S \subseteq V(G')$ is a minimal separator of $G'$ such that $v^{xy} \in S$; we must show that $G'[S] \in \mathcal{C}$. Fix distinct $a,b \in V(G') \setminus S$ such that $S$ is a minimal $(a,b)$-separator of $G'$.
Then $S^* := (S \setminus \{v^{xy}\}) \cup \{x,y\}$ is an $(a,b)$-separator of $G$. Let $S' \subseteq S^*$ be a minimal $(a,b)$-separator of $G$; since $G \in \mathcal{G}_{\mathcal{C}}$, we have that $G[S'] \in \mathcal{C}$.
If $x,y \notin S'$, then $S' \subsetneqq S$ is an $(a,b)$-separator of $G'$, contrary to the minimality of $S$. So, $S'$ contains at least one of $x,y$.
Then $(S' \setminus \{x,y\}) \cup \{v^{xy}\}$ is an $(a,b)$-separator of $G'$; since $(S' \setminus \{x,y\}) \cup \{v^{xy}\} \subseteq S$, the minimality of $S$ implies that $S = (S' \setminus \{x,y\}) \cup \{v^{xy}\}$.
As we show next, the graph $G'[S]$ can be obtained from an induced subgraph of $G[S']$ by possibly adding some edges.
By symmetry, we may assume that $x\in S'$.
Since $S\setminus\{v^{xy}\} = S'\setminus\{x,y\}$, the graph $G'[S]$ is isomorphic to the graph obtained from the subgraph of $G[S']$ induced by $S'\setminus\{y\}$ by adding to it the edges from $x$ to all vertices in $S'\setminus\{x,y\}$ that are adjacent in $G$ to $y$ but not to $x$.
Since $G[S'] \in \mathcal{C}$, and $\mathcal{C}$ is hereditary and closed under edge addition, we deduce that $G'[S] \in \mathcal{C}$, and we are done.

We still have to consider minimal separators of $G'$ that do not contain $v^{xy}$. Here, we first observe that for any pair of nonadjacent vertices $a,b$ of $G'$, and any set $S \subseteq V(G') \setminus \{a,b,v^{xy}\} = V(G) \setminus \{a,b,x,y\}$, both the following hold:
\begin{itemize}
\item[(1)] if $v^{xy} \notin \{a,b\}$, then $S$ is an $(a,b)$-separator of $G$ if and only if $S$ is an $(a,b)$-separator of $G'$;
\item[(2)] if $v^{xy} = a$, then $S$ is an $(x,b)$-separator of $G$ if and only if $S$ is an $(a,b)$-separator of $G'$.
\end{itemize}
Clearly, (1) and (2) imply that any set $S \subseteq V(G') \setminus \{v^{xy}\} = V(G) \setminus \{x,y\}$ is a minimal separator of $G'$ if and only if it is a minimal separator of $G$. But for any $S \subseteq V(G') \setminus \{v^{xy}\} = V(G) \setminus \{x,y\}$, we have that $G'[S] = G[S]$, and moreover, if $S$ is a minimal separator of $G$, then $G[S] \in \mathcal{C}$.
This shows that if a set $S \subseteq V(G') \setminus \{v^{xy}\}$ is a minimal separator of $G'$, then $G'[S] \in \mathcal{C}$.

Finally, it remains to show that $\mathcal{G}_{\mathcal{C}}$ is precisely the class of all $\{2K_1 \vee H \mid H \in \mathcal{M}_{\mathcal{C}}\}$-induced-minor-free graphs. 
Let us first show that all graphs in $\mathcal{G}_{\mathcal{C}}$ are $\{2K_1 \vee H \mid H \in \mathcal{M}_{\mathcal{C}}\}$-induced-minor-free. Since $\mathcal{G}_{\mathcal{C}}$ is closed under induced minors, it suffices to show that, for all $H \in \mathcal{M}_{\mathcal{C}}$, the graph $2K_1 \vee H$ does not belong to $\mathcal{G}_{\mathcal{C}}$. So, fix $H \in \mathcal{M}_{\mathcal{C}}$, and let $x$ and $y$ be the two vertices of the $2K_1$ from $2K_1 \vee H$. 
Then $V(H)$ is a minimal $(x,y)$-separator of $2K_1 \vee H$. 
Since the subgraph of $2K_1 \vee H$ induced by $V(H)$ is $H$, which does not belong to $\mathcal{C}$ (because $H \in \mathcal{M}_{\mathcal{C}}$), it follows that $2K_1 \vee H$ does not belong to $\mathcal{G}_{\mathcal{C}}$. 

For the reverse direction, fix any $\{2K_1 \vee H \mid H \in \mathcal{M}_{\mathcal{C}}\}$-induced-minor-free graph $G$; we must show that $G \in \mathcal{G}_{\mathcal{C}}$. Fix distinct, nonadjacent vertices $a$ and $b$ of $G$, and fix a minimal $(a,b)$-separator $S$ of $G$. We must show that $G[S] \in \mathcal{C}$. Suppose otherwise. Then since $\mathcal{C}$ is closed under induced minors, there exists some $H \in \mathcal{M}_{\mathcal{C}}$ such that $H$ is an induced minor of $G[S]$. We will derive a contradiction by showing that $2K_1 \vee H$ is an induced minor of $G$. Let $\{X_v\}_{v \in V(H)}$ be a family of nonempty, pairwise disjoint subsets of $S$, each inducing a connected subgraph of $G[S]$, and having the property that for all distinct $u,v \in V(H)$, there is an edge between $X_u$ and $X_v$ in $G[S]$ if and only if $uv \in E(H)$. Next, let $X_a$ (resp., $X_b$) be the vertex set of the component of $G \setminus S$ that contains $a$ (resp., $b$). Obviously, $X_a$ and $X_b$ are disjoint and anticomplete to each other in $G$. 
Further, since $S$ is a minimal $(a,b)$-separator of $G$, we see that, in $G$, every vertex of $S$ has a neighbor both in $X_a$ and in $X_b$. In particular, for all $v \in V(H)$, there is an edge between $X_a$ and $X_v$ in $G$, and there is also an edge between $X_b$ and $X_v$ in $G$. But now by considering the family $\{X_v\}_{v \in \{a,b\} \cup V(H)}$, we see that $2K_1 \vee H$ is an induced minor of $G$ (here, $a$ and $b$ are the two vertices of the $2K_1$). 
\end{proof}

We now apply \Cref{thm:GC-induced-minors} to the cases when $\mathcal{C} = \mathcal{C}_k$ for $k\in\{0,1\}$, and thus $\mathcal{G}_{\mathcal{C}} = \mathcal{G}_k$.

For $k = 0$, we have $\mathcal{M}_{\mathcal{C}_0} = \{K_1\}$ and therefore
$\{2K_1 \vee H \mid H \in \mathcal{M}_{\mathcal{C}_0}\} = \{2K_1 \vee K_1\} = \{P_3\}$; we obtain that $\mathcal{G}_0$ is precisely the class of all $P_3$-induced-minor-free graphs.
In particular, we recover the known easy fact that the class $\mathcal{G}_0$ of all disjoint unions of complete graphs is precisely the class of all $P_3$-induced-minor-free graphs (which is also the class of all $P_3$-free graphs).
In this case, since the set of forbidden induced minors is a singleton, it is in fact also the set of \emph{minimal} forbidden induced minors, that is, $\mathcal{M}_{\mathcal{G}_{0}} = \{P_3\}$.

For $k = 1$, we have $\mathcal{M}_{\mathcal{C}_1} = \{2K_1\}$ and we obtain that $\{2K_1 \vee H \mid H \in \mathcal{M}_{\mathcal{C}_1}\} = \{2K_1 \vee 2K_1\} = \{C_4\}$; that is, $\mathcal{G}_1$ is precisely the class of all $C_4$-induced-minor-free graphs.
Note that a graph is $C_4$-induced-minor-free if and only if it contains no induced cycles of length greater than three, that is, it is a chordal graph.
By the definition of ${\cal G}_1$, graphs in ${\cal G}_1$ are precisely the graphs in which all minimal separators are cliques. 
Thus, we have again recovered a known result: a graph is chordal if and only if all its minimal separators are cliques (see~\cite{D61}).
Since the set of forbidden induced minors is a singleton, we again conclude that $\mathcal{M}_{\mathcal{G}_{1}} = \{C_4\}$.

The reader may wonder whether, in \cref{thm:GC-induced-minors}, it is necessary to assume that $\mathcal{C}$ is closed under edge addition. Here, we note that if $\mathcal{C}$ is the class of all edgeless graphs, then $K_{2,3} \in \mathcal{G}_{\mathcal{C}}$, but contracting one edge of $K_{2,3}$ produces the diamond, which does not belong to $\mathcal{G}_{\mathcal{C}}$. Thus, the assumption about edge additions cannot be simply removed from \cref{thm:GC-induced-minors}, although it is possible that some other (weaker) assumption would suffice instead. 

\begin{corollary}\label{cor-K2k1} For all integers $k\ge 0$, classes $\mathcal{C}_k$ and $\mathcal{G}_k$ are closed under induced minors, and all graphs in $\mathcal{G}_k$ are $K_{2,k+1}$-induced-minor-free. 
\end{corollary}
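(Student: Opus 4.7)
The plan is to derive both statements from \cref{thm:GC-induced-minors} applied to $\mathcal{C} = \mathcal{C}_k$. First I would verify the two hypotheses required by that theorem. Heredity of $\mathcal{C}_k$ is immediate: restricting a partition of $V(G)$ into $k$ cliques to an induced subgraph yields a partition of that subgraph into $k$ cliques. Closure under edge addition is also clear: if $(V_1,\dots,V_k)$ is a partition of $V(G)$ into cliques, then adding an edge preserves the property that each $V_i$ is a clique, so the same partition still witnesses membership in $\mathcal{C}_k$. Since $\mathcal{G}_k = \mathcal{G}_{\mathcal{C}_k}$, \cref{thm:GC-induced-minors} immediately gives that $\mathcal{C}_k$ and $\mathcal{G}_k$ are both closed under induced minors.

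For the second assertion, I would write $K_{2,k+1} = 2K_1 \vee (k+1)K_1$ and show that $(k+1)K_1 \in \mathcal{M}_{\mathcal{C}_k}$. The edgeless graph on $k+1$ vertices is not in $\mathcal{C}_k$ because every clique in it has at most one vertex, so any clique partition needs at least $k+1$ parts. On the other hand, every proper induced minor of $(k+1)K_1$ arises only from vertex deletions (there are no edges to contract) and is therefore isomorphic to $jK_1$ for some $j \le k$, which clearly lies in $\mathcal{C}_k$. Hence $(k+1)K_1 \in \mathcal{M}_{\mathcal{C}_k}$, and so $2K_1 \vee (k+1)K_1 = K_{2,k+1}$ belongs to the family $\{2K_1 \vee H \mid H \in \mathcal{M}_{\mathcal{C}_k}\}$. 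By \cref{thm:GC-induced-minors}, every graph in $\mathcal{G}_k$ is $K_{2,k+1}$-induced-minor-free.

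No serious obstacle is anticipated: the corollary is essentially a direct instantiation of \cref{thm:GC-induced-minors} plus the elementary verification that $(k+1)K_1$ is a minimal obstruction for being a union of $k$ cliques. The only minor care point is to confirm that $\mathcal{C}_k$ really is closed under edge addition, which could be misread since adding an edge could in principle merge two of the ``allowed'' cliques — but since the same partition continues to work, no issue arises.
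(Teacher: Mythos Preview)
Your proposal is correct and follows essentially the same approach as the paper: verify that $\mathcal{C}_k$ is hereditary and closed under edge addition, then invoke \cref{thm:GC-induced-minors}. The only minor difference is in the second assertion: the paper argues directly that $K_{2,k+1}\notin\mathcal{G}_k$ (exhibiting the $(k{+}1)$-vertex edgeless minimal separator) and then uses closure under induced minors, whereas you route through the forbidden-induced-minor characterization in \cref{thm:GC-induced-minors} by checking $(k{+}1)K_1\in\mathcal{M}_{\mathcal{C}_k}$; both amount to the same observation and your extra minimality check is harmless.
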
 
\begin{proof} 
Fix an integer $k\ge 0$. 
Obviously, $\mathcal{C}_k$ is hereditary and closed under edge addition. So, by Theorem~\ref{thm:GC-induced-minors}, $\mathcal{C}_k$ is closed under induced minors, and $\mathcal{G}_k = \mathcal{G}_{\mathcal{C}_k}$ is closed under induced minors. It remains to show that $K_{2,k+1} \notin \mathcal{G}_k$. But this is obvious: one minimal separator of $K_{2,k+1}$ induces an edgeless subgraph on $k+1$ vertices in $K_{2,k+1}$, and $\mathcal{C}_k$ does not contain edgeless graphs on more than $k$ vertices. 
\end{proof} 

We now proceed to showing that when $\mathcal{C}$ is is a hereditary class of graphs closed not only under edge addition, but also under the addition of universal vertices, the conclusion of \cref{thm:GC-induced-minors} can be strengthened to a characterization of the class of minimal forbidden induced minors for the class $\mathcal{G}_\mathcal{C}$ (see \Cref{thm-min-forb-ind-minor}).
We start with some preparatory statements.
A nonnull graph is {\em anticonnected} if its complement is connected. 
An {\em anticomponent} of a nonnull graph $G$ is a maximal anticonnected induced subgraph of $G$. Clearly, every nonnull graph is the complete join of its anticomponents.

\begin{proposition} \label{prop-H-anticonn-ind-minor} Let $H$ be an anticonnected graph on at least two vertices, and assume that $H$ is an induced minor of a graph $G$. Let $\{X_v\}_{v \in V(H)}$ be an induced minor model of $H$ in $G$.
Then there exists an anticomponent $C$ of $G$ such that $\bigcup_{v \in V(H)} X_v \subseteq V(C)$. 
\end{proposition}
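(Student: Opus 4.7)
The plan is to exploit the fact that any two distinct anticomponents of $G$ are completely joined to each other, so nonadjacency between two vertex subsets of $G$ severely restricts which anticomponents can meet them. I will first establish the following local statement and then bootstrap it along a path in the complement $\overline{H}$.

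\smallskip

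\noindent\textbf{Key Lemma (proposed).} If $u,v \in V(H)$ are distinct vertices with $uv \notin E(H)$, then there exists an anticomponent $C$ of $G$ such that $X_u \cup X_v \subseteq V(C)$.

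\smallskip

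To prove this, I would argue by contradiction using the complete-join structure: since $G$ is the complete join of its anticomponents, every vertex in one anticomponent is adjacent to every vertex in every other anticomponent. Suppose $X_u \cup X_v$ is not contained in a single anticomponent. Since $uv \notin E(H)$, the definition of an induced minor model gives that there is no edge of $G$ between $X_u$ and $X_v$. I would then pick $x \in X_u$ and $y \in X_v$ witnessing that no single anticomponent contains $X_u \cup X_v$ (say $x$ lies in an anticomponent $C$ that does not contain $y$, or vice versa), and observe that $x$ and $y$ must then lie in distinct anticomponents; by the complete join, $xy \in E(G)$, a contradiction. A careful case analysis on whether each of $X_u, X_v$ is itself contained in a single anticomponent takes care of all possibilities.

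\smallskip

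Given the Key Lemma, the proposition follows by a simple connectivity argument. Fix any $u_0 \in V(H)$ and let $C$ be the (unique) anticomponent of $G$ containing $X_{u_0}$; note that $X_{u_0}$ itself must lie in a single anticomponent because it is nonempty and each vertex of $G$ belongs to exactly one anticomponent, and the Lemma applied to $u_0$ and any non-neighbor of $u_0$ in $H$ certifies which one to pick. I would then show that $X_v \subseteq V(C)$ for every $v \in V(H)$. Since $H$ has at least two vertices and $\overline{H}$ is connected, there is a path $u_0 = w_0, w_1, \ldots, w_k = v$ in $\overline{H}$, i.e., consecutive $w_{i-1}w_i \notin E(H)$. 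Applying the Key Lemma to each consecutive pair yields an anticomponent $C_i$ of $G$ with $X_{w_{i-1}} \cup X_{w_i} \subseteq V(C_i)$; since each $X_{w_i}$ is nonempty and the anticomponents partition $V(G)$, we must have $C_1 = C_2 = \cdots = C_k = C$, whence $X_v \subseteq V(C)$. Taking the union over all $v \in V(H)$ then gives $\bigcup_{v \in V(H)} X_v \subseteq V(C)$, completing the proof.

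\smallskip

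The only genuine obstacle is the Key Lemma; once it is in hand, the path argument in $\overline{H}$ is routine. I expect the Lemma to require the mildly fiddly observation that ``no edge between $X_u$ and $X_v$'' is incompatible with any vertex of $X_u \cup X_v$ lying outside a common anticomponent, which is exactly what the complete join of anticomponents rules out.
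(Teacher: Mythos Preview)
Your proof is correct and close in spirit to the paper's, but organized around a different intermediate step. The paper first proves that \emph{each individual} $X_v$ lies in a single anticomponent of $G$ (arguing that if some $X_u$ meets two distinct anticomponents then $u$ is forced to be universal in $H$, contradicting anticonnectedness on at least two vertices), and then shows that the set $U$ of vertices $v$ with $X_v$ contained in a fixed anticomponent $C$ is complete to $V(H)\setminus U$ in $H$; anticonnectedness then gives $U=V(H)$. Your Key Lemma about nonadjacent pairs packages both of these at once, and your path-in-$\overline{H}$ propagation is exactly the contrapositive of the paper's ``$U$ is complete to its complement'' observation. Your route has the small advantage that ``each $X_v$ lies in one anticomponent'' falls out as a corollary of the Key Lemma (since every vertex of an anticonnected graph on at least two vertices has a non-neighbor); the paper's route, conversely, isolates the single-$X_v$ statement cleanly and then finishes with a one-line complete-join computation.

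Two minor wobbles worth tightening. First, the sentence ``$X_{u_0}$ itself must lie in a single anticomponent because it is nonempty and each vertex of $G$ belongs to exactly one anticomponent'' is not a valid reason on its own (a nonempty set can certainly meet several blocks of a partition); the correct justification is the one you give immediately afterwards, namely the Key Lemma applied to $u_0$ and any non-neighbor. Second, in the Key Lemma, the existence of $x\in X_u$ and $y\in X_v$ lying in distinct anticomponents is not entirely automatic from ``$X_u\cup X_v$ is not contained in one anticomponent''; the short case analysis you allude to (depending on whether one of $X_u,X_v$ already spans two anticomponents) is genuinely needed, and it does go through.
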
 
\begin{proof} 
$\phantom{i}$ 

\begin{quote} 
\textbf{Claim.} For all $v \in V(H)$, there exists an anticomponent $C$ of $G$ such that $X_v \subseteq V(C)$. 
\end{quote} 
{\em Proof of the Claim.} Suppose otherwise. Then there exists some vertex $u \in V(H)$ and distinct anticomponents $C_1$ and $C_2$ of $G$ such that $X_u$ intersects both $V(C_1)$ and $V(C_2)$. Let us show that $u$ is a universal vertex of $H$. 
Fix any $w \in V(H) \setminus \{u\}$. Let $C$ be any anticomponent of $G$ such that $X_w \cap V(C) \neq \emptyset$. By symmetry, we may assume that $C \neq C_1$.\footnote{Indeed, either $C \neq C_1$ or $C \neq C_2$, and by symmetry, we may assume that $C \neq C_1$.} Then $X_u \cap V(C_1)$ and $X_w \cap V(C)$ are both nonempty and complete to each other in $G$, and in particular, there is at least one edge between $X_u$ and $X_w$ in $G$. So, $uw \in E(H)$. This proves that $u$ is a universal vertex of $H$. But this is impossible, since $H$ is an anticonnected graph on at least two vertices, and consequently, $H$ has no universal vertices.~$\blacklozenge$ 

\medskip 

Fix any anticomponent $C$ of $G$ such that $\Big(\bigcup_{v \in V(H)} X_v\Big) \cap V(C) \neq \emptyset$, and set $U := \{v \in V(H) \mid X_v \cap V(C) \neq \emptyset\}$. By construction, we have that $U \neq \emptyset$, and by the claim, we have that $U = \{v \in V(H) \mid X_v \subseteq V(C)\}$. It now suffices to show that $U = V(H)$, for it will then follow that $\bigcup_{v \in V(H)} X_v \subseteq V(C)$, which is what we need. Since $U \neq \emptyset$ and $H$ is anticonnected, it is in fact enough to show that $U$ is complete to $V(H) \setminus U$ in $H$. So, fix some $u \in U$ and $w \in V(H) \setminus U$. Since $u \in U$, we have that $X_u \subseteq V(C)$. On the other hand, by the claim, there exists an anticomponent $D$ of $G$ such that $X_w \subseteq V(D)$; since $w \notin U$, we have that $D \neq C$. Since $C$ and $D$ are distinct anticomponents of $G$, we know that $V(C)$ and $V(D)$ are complete to each other in $G$; consequently, $X_u$ is complete to $X_w$ in $G$, and it follows that $uw \in E(H)$. This proves that $U$ is complete to $V(H) \setminus U$, and we are done. 
\end{proof} 

\begin{proposition} \label{prop-remove-2K1} Let $H_1$ and $H_2$ be graphs. Assume that $H_1$ contains no universal vertices and that $2K_1 \vee H_1$ is an induced minor of $2K_1 \vee H_2$. Then $H_1$ is an induced minor of $H_2$. 
\end{proposition}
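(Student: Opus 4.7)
The plan is to convert an induced minor model $\{X_w\}_{w\in V(2K_1\vee H_1)}$ of $2K_1\vee H_1$ in $2K_1\vee H_2$ into an induced minor model of $H_1$ in $H_2$. Write $a,b$ for the two vertices of the $2K_1$ summand on the left and $a^*,b^*$ for those of the $2K_1$ summand on the right; the case $V(H_1)=\emptyset$ is trivial, so assume $H_1$ is nonnull. The goal is to arrange matters so that each block indexed by $V(H_1)$ lies in $V(H_2)$, and then check that the resulting family has the right connectedness and adjacency pattern.

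The main step is to establish the following dichotomy: either (Case 1) $X_w\subseteq V(H_2)$ for every $w\in V(H_1)$, or (Case 2) there are distinct $u,v\in V(H_1)$ with $X_v=\{a^*\}$ and $X_u=\{b^*\}$, while all other blocks $X_w$ (as well as $X_a$ and $X_b$) are contained in $V(H_2)$. This is the only place where the no-universal-vertex hypothesis is used. Suppose $a^*\in X_v$ for some $v\in V(H_1)$; by hypothesis, $v$ has a non-neighbor $u$ in $H_1$, and then $X_u$ is nonempty, disjoint from and anticomplete to $X_v\ni a^*$. Since $a^*$ is adjacent in $2K_1\vee H_2$ to every vertex of $V(H_2)$, this forces $X_u=\{b^*\}$; the symmetric argument then gives $X_v=\{a^*\}$. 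For any other $w\in V(H_1)\setminus\{u,v\}$, applying the same reasoning shows that $wv,wu\in E(H_1)$ (otherwise $X_w$ would be forced to be $\{b^*\}$ or $\{a^*\}$, violating disjointness), so $X_w$ is disjoint from $\{a^*,b^*\}$ and lies in $V(H_2)$; the same disjointness argument places $X_a,X_b$ in $V(H_2)$ as well. The symmetric possibility $b^*\in X_v$ is handled identically.

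In Case 1, the family $\{X_w\}_{w\in V(H_1)}$ is already an induced minor model of $H_1$ in $H_2$, since $H_2$ is the subgraph of $2K_1\vee H_2$ induced by $V(H_2)$, so connectedness and adjacency transfer verbatim. In Case 2, I would set $Y_v:=X_a$, $Y_u:=X_b$, and $Y_w:=X_w$ for $w\in V(H_1)\setminus\{u,v\}$. These are nonempty, pairwise disjoint subsets of $V(H_2)$, each inducing a connected subgraph of $H_2$. The adjacency check is then routine: $X_a$ is anticomplete to $X_b$ in $H_2$ (matching $uv\notin E(H_1)$); both $X_a$ and $X_b$ have an edge in $H_2$ to every $X_w$ with $w\in V(H_1)\setminus\{u,v\}$, because $a$ and $b$ are complete to $V(H_1)$ in $2K_1\vee H_1$ (matching $vw,uw\in E(H_1)$, which was established above); and the adjacencies among the remaining blocks are unchanged. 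The one delicate point is the dichotomy step: without the no-universal-vertex hypothesis, $X_v$ could contain $a^*$ together with arbitrary vertices of $V(H_2)$, and then the clean swap above would no longer suffice.
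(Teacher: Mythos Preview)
Your proof is correct and follows essentially the same strategy as the paper's: establish the dichotomy that either all $H_1$-blocks already lie in $V(H_2)$, or exactly two of them are the singletons $\{a^*\}$ and $\{b^*\}$ (with the corresponding vertices $u,v$ forming a $2K_1$ anticomponent of $H_1$), and in the latter case swap in the blocks $X_a$ and $X_b$ to obtain a model of $H_1$ in $H_2$.

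The only methodological difference is how the dichotomy is obtained. The paper factors this through a separate lemma (Proposition~\ref{prop-H-anticonn-ind-minor}) about anticomponents: any induced minor model of an anticonnected graph on at least two vertices must lie within a single anticomponent of the host graph. Applied to the anticomponent of $H_1$ containing a vertex whose block meets $\{a^*,b^*\}$, this forces that entire anticomponent's blocks into $\{a^*,b^*\}$, whence it is a $2K_1$. Your argument proves the same conclusion directly from the universality of $a^*$ and $b^*$ over $V(H_2)$, without isolating the anticomponent statement. Your route is more self-contained; the paper's route yields a reusable lemma and makes the final isomorphism $(2K_1\vee H_1)\setminus V(H)\cong H_1$ drop out cleanly. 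Substantively, the two arguments coincide.
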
 

\begin{proof} 
If $H_1$ is the null graph, then it is obviously an induced minor of $H_2$. So, we may assume that $H_1$ is nonnull. Since $2K_1 \vee H_1$ is an induced minor of $2K_1 \vee H_2$, it follows that the graph $H_2$ is also nonnull. 

Now, using the fact that $2K_1 \vee H_1$ is an induced minor of $2K_1 \vee H_2$, we fix an induced minor model $\{X_v\}_{v \in V(2K_1 \vee H_1)}$ of $2K_1 \vee H_1$ in $2K_1 \vee H_2$.
If $\bigcup_{v \in V(H_1)} X_v \subseteq V(H_2)$, then $H_1$ is an induced minor of $H_2$, and we are done. So, we may assume that $\bigcup_{v \in V(H_1)} X_v \not\subseteq V(H_2)$. Fix an anticomponent $H$ of $H_1$ such that $\bigcup_{v \in V(H)} X_v \not\subseteq V(H_2)$. Since $H_1$ has no universal vertices, we have that $|V(H)| \geq 2$. In view of Proposition~\ref{prop-H-anticonn-ind-minor}, it follows that $\bigcup_{v \in V(H)} X_v \subseteq V(2K_1)$. Since $H$ has at least two vertices, it follows that $H \cong 2K_1$ and $\bigcup_{v \in V(H)} X_v = V(2K_1)$. 
Consequently, $\bigcup_{v \in V(2K_1 \vee H_1) \setminus V(H)} X_v\subseteq V(H_2)$, and see that $(2K_1 \vee H_1) \setminus V(H)$ is an induced minor of $H_2$. 
But note that $(2K_1 \vee H_1) \setminus V(H) \cong H_1$. So, $H_1$ is an induced minor of $H_2$. 
\end{proof} 

We note that the assumption that $H_1$ contains no universal vertices cannot be removed from Proposition~\ref{prop-remove-2K1}. To see this, note that $2K_1 \vee K_2$ is an induced minor of $2K_1 \vee 3K_1$ (indeed, we obtain $2K_1 \vee K_2$ by contracting any one edge of $2K_1 \vee 3K_1$), but $K_2$ is not an induced minor of $3K_1$. 

\begin{lemma} \label{lemma-GC-incomparable} Let $\mathcal{C}$ be a class of graphs, closed under the addition of universal vertices. Then no graph in $\mathcal{M}_{\mathcal{C}}$ contains a universal vertex. Moreover, for any two nonisomorphic graphs $H_1,H_2 \in \mathcal{M}_{\mathcal{C}}$, the graphs $2K_1 \vee H_1$ and $2K_1 \vee H_2$ are incomparable with respect to the induced minor relation. 
\end{lemma}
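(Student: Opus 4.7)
The plan is to handle the two assertions separately, with the first being a direct unfolding of definitions and the second an application of Proposition~\ref{prop-remove-2K1}.

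For the first assertion, I would argue by contradiction. Suppose some $H \in \mathcal{M}_{\mathcal{C}}$ has a universal vertex $u$. Then $H \setminus u$ is obtained from $H$ by a single vertex deletion, hence is an induced minor of $H$, and since $|V(H \setminus u)| < |V(H)|$ it cannot be isomorphic to $H$, so it is a \emph{proper} induced minor of $H$. By the definition of $\mathcal{M}_{\mathcal{C}}$, this forces $H \setminus u \in \mathcal{C}$. But $H$ is isomorphic to the graph obtained from $H \setminus u$ by adding a universal vertex (namely $u$), so the hypothesis that $\mathcal{C}$ is closed under the addition of universal vertices yields $H \in \mathcal{C}$, contradicting $H \in \mathcal{M}_{\mathcal{C}}$.

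For the second assertion, let $H_1, H_2 \in \mathcal{M}_{\mathcal{C}}$ be nonisomorphic and suppose for contradiction that $2K_1 \vee H_1$ and $2K_1 \vee H_2$ are comparable under the induced minor relation; by symmetry we may assume that $2K_1 \vee H_1$ is an induced minor of $2K_1 \vee H_2$. Since $H_1 \in \mathcal{M}_{\mathcal{C}}$, the first assertion (just established) tells us that $H_1$ has no universal vertex, so the hypothesis of Proposition~\ref{prop-remove-2K1} is satisfied, and we conclude that $H_1$ is an induced minor of $H_2$. Because $H_1 \not\cong H_2$, this $H_1$ is a proper induced minor of $H_2$, and so, by the definition of $\mathcal{M}_{\mathcal{C}}$ applied to $H_2$, we have $H_1 \in \mathcal{C}$. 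This contradicts $H_1 \in \mathcal{M}_{\mathcal{C}}$, which requires $H_1 \notin \mathcal{C}$.

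There is no substantive obstacle here: once Proposition~\ref{prop-remove-2K1} is available, both parts follow by unfolding the definitions of $\mathcal{M}_{\mathcal{C}}$ and of a proper induced minor. The only subtlety worth being careful about is that Proposition~\ref{prop-remove-2K1} requires $H_1$ to have no universal vertex, which is exactly why it is essential to prove the first assertion of the lemma before attempting the second.
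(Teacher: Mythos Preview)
Your proof is correct and follows essentially the same approach as the paper's own proof: both parts argue by contradiction, the first unfolding the definition of $\mathcal{M}_{\mathcal{C}}$ together with closure under universal vertices, and the second invoking Proposition~\ref{prop-remove-2K1} (enabled by the first assertion) to deduce that $H_1$ would be a proper induced minor of $H_2$. The only cosmetic difference is that the paper phrases the second part by first noting that nonisomorphic members of $\mathcal{M}_{\mathcal{C}}$ are incomparable under the induced minor relation, whereas you derive this inside the contradiction; the content is identical.
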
 
\begin{proof} 
Let us first show that no graph in $\mathcal{M}_{\mathcal{C}}$ contains a universal vertex. Suppose otherwise, and fix a graph $H \in \mathcal{M}_{\mathcal{C}}$ that contains a universal vertex $u$. By the definition of $\mathcal{M}_{\mathcal{C}}$, we have that $H \setminus u$ belongs to $\mathcal{C}$. But then since $\mathcal{C}$ is closed under the addition of universal vertices, we have that $H \in \mathcal{C}$, contrary to the fact that $H \in \mathcal{M}_{\mathcal{C}}$. 

Now, fix any two nonisomorphic graphs $H_1,H_2 \in \mathcal{M}_{\mathcal{C}}$. 
By the definition of $\mathcal{M}_{\mathcal{C}}$, the graphs $H_1$ and $H_2$ are incomparable with respect to the induced minor relation. Moreover, by what we just proved, $H_1$ and $H_2$ have no universal vertices. So, by Proposition~\ref{prop-remove-2K1}, $2K_1 \vee H_1$ and $2K_1 \vee H_2$ are incomparable with respect to the induced minor relation. 
\end{proof}

\begin{theorem} \label{thm-min-forb-ind-minor} 
Let $\mathcal{C}$ be a hereditary class of graphs, closed under edge addition, and closed under the addition of universal vertices. Then $\mathcal{M}_{\mathcal{G}_{\mathcal{C}}} = \{2K_1 \vee H \mid H \in \mathcal{M}_{\mathcal{C}}\}$. 
\end{theorem}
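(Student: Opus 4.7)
The plan is to prove both inclusions of the claimed equality separately, leaning heavily on \cref{thm:GC-induced-minors} (which identifies $\{2K_1 \vee H \mid H \in \mathcal{M}_{\mathcal{C}}\}$ as a set of forbidden induced minors for $\mathcal{G}_{\mathcal{C}}$) and on \cref{lemma-GC-incomparable} (which says graphs of the form $2K_1 \vee H$ with $H \in \mathcal{M}_{\mathcal{C}}$ are pairwise incomparable under the induced minor order). All the technical work is already packaged in these results; the proof below essentially combines them.

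For the inclusion $\{2K_1 \vee H \mid H \in \mathcal{M}_{\mathcal{C}}\} \subseteq \mathcal{M}_{\mathcal{G}_{\mathcal{C}}}$, I fix $H \in \mathcal{M}_{\mathcal{C}}$ and must show that $2K_1 \vee H$ is a \emph{minimal} forbidden induced minor for $\mathcal{G}_{\mathcal{C}}$. That $2K_1 \vee H \notin \mathcal{G}_{\mathcal{C}}$ is already part of \cref{thm:GC-induced-minors}. For minimality, I consider an arbitrary proper induced minor $G'$ of $2K_1 \vee H$ and argue $G' \in \mathcal{G}_{\mathcal{C}}$. Suppose instead that $G' \notin \mathcal{G}_{\mathcal{C}}$; then by \cref{thm:GC-induced-minors}, there exists $H' \in \mathcal{M}_{\mathcal{C}}$ such that $2K_1 \vee H'$ is an induced minor of $G'$, and hence of $2K_1 \vee H$. \cref{lemma-GC-incomparable} forces $H' \cong H$, so $2K_1 \vee H$ is an induced minor of the proper induced minor $G'$ of $2K_1 \vee H$. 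But since every vertex deletion or edge contraction strictly decreases the vertex count, a proper induced minor has strictly fewer vertices, yielding $|V(2K_1 \vee H)| \leq |V(G')| < |V(2K_1 \vee H)|$, a contradiction.

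For the reverse inclusion $\mathcal{M}_{\mathcal{G}_{\mathcal{C}}} \subseteq \{2K_1 \vee H \mid H \in \mathcal{M}_{\mathcal{C}}\}$, I take $F \in \mathcal{M}_{\mathcal{G}_{\mathcal{C}}}$. Then $F \notin \mathcal{G}_{\mathcal{C}}$, so by \cref{thm:GC-induced-minors} there exists some $H \in \mathcal{M}_{\mathcal{C}}$ with $2K_1 \vee H$ an induced minor of $F$. By the previous paragraph, $2K_1 \vee H \notin \mathcal{G}_{\mathcal{C}}$. If $2K_1 \vee H$ were a \emph{proper} induced minor of $F$, this would contradict the minimality of $F$ in $\mathcal{M}_{\mathcal{G}_{\mathcal{C}}}$. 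Hence $F \cong 2K_1 \vee H$, which finishes the argument.

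There is no real obstacle here, since the genuinely delicate points were isolated earlier: the characterization of $\mathcal{G}_{\mathcal{C}}$ via forbidden induced minors in \cref{thm:GC-induced-minors}, and the incomparability statement of \cref{lemma-GC-incomparable}, which in turn rested on \cref{prop-remove-2K1} and the absence of universal vertices in members of $\mathcal{M}_{\mathcal{C}}$ (guaranteed by closure of $\mathcal{C}$ under universal vertex addition). The only subtlety to flag is the vertex-count argument used to rule out the bad case in the first direction, which is where the pairwise incomparability of the candidate obstructions is decisive.
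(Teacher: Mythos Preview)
Your proof is correct and follows essentially the same approach as the paper: both combine \cref{thm:GC-induced-minors} (the characterization of $\mathcal{G}_{\mathcal{C}}$ by the forbidden induced minors $\{2K_1 \vee H \mid H \in \mathcal{M}_{\mathcal{C}}\}$) with \cref{lemma-GC-incomparable} (pairwise incomparability of these graphs). The paper's proof simply invokes the general principle that an antichain of forbidden induced minors characterizing an induced-minor-closed class must coincide with its set of minimal forbidden induced minors, whereas you spell out both inclusions explicitly, including the vertex-count argument; the content is the same.
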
 
\begin{proof} 
By Theorem~\ref{thm:GC-induced-minors}, $\mathcal{G}_{\mathcal{C}}$ is precisely the class of all $\{2K_1 \vee H \mid H \in \mathcal{M}_{\mathcal{C}}\}$-induced-minor-free graphs. On the other hand, Lemma~\ref{lemma-GC-incomparable} guarantees that nonisomorphic graphs in $\{2K_1 \vee H \mid H \in \mathcal{M}_{\mathcal{C}}\}$ are incomparable with respect to the induced minor relation. So, $\mathcal{M}_{\mathcal{G}_{\mathcal{C}}} = \{2K_1 \vee H \mid H \in \mathcal{M}_{\mathcal{C}}\}$. 
\end{proof}

We now apply \Cref{thm-min-forb-ind-minor} to the cases when $\mathcal{C} = \mathcal{C}_2$, and thus $\mathcal{G}_{\mathcal{C}} = \mathcal{G}_2$.
It can be shown that $\mathcal{M}_{\mathcal{G}_2} = \{\overline{K_2 \cup C_{2k+1}} \mid k \in \mathbb{N}\}$ (see \cref{fig:MM2}).
To this end, the following auxiliary proposition will be useful.

\begin{figure}
\begin{center}
\includegraphics[scale=0.65]{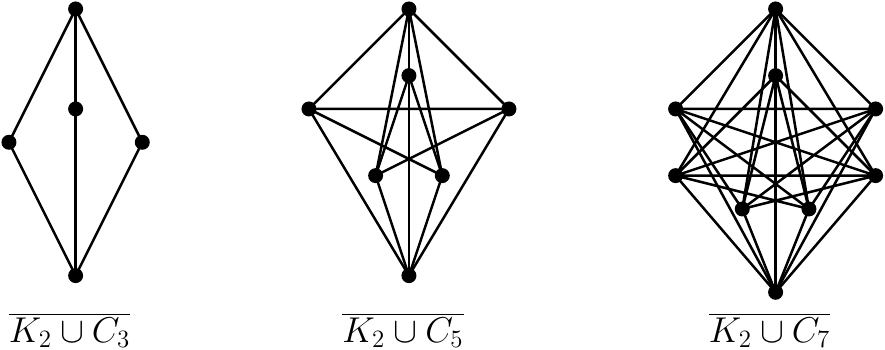}
\end{center}
\caption{Some small graphs in $\mathcal{M}_{\mathcal{G}_2}$.} \label{fig:MM2}
\end{figure}

\begin{proposition} \label{prop-ind-minor-complement-sg} If a graph $H_1$ is an induced minor of a graph $H_2$, then $\overline{H_1}$ is isomorphic to a (not necessarily induced) subgraph of $\overline{H_2}$. 
\end{proposition}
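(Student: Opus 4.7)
The plan is to use the characterization of the induced minor relation via induced minor models, which is stated at the end of \cref{subsec:def}. Since $H_1$ is an induced minor of $H_2$, we may fix an induced minor model $\{X_v\}_{v \in V(H_1)}$ of $H_1$ in $H_2$; recall that the sets $X_v$ are nonempty, pairwise disjoint subsets of $V(H_2)$, each inducing a connected subgraph of $H_2$, with the property that for distinct $u,v \in V(H_1)$, the pair $uv$ is an edge of $H_1$ if and only if there is at least one edge between $X_u$ and $X_v$ in $H_2$.

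Next, for each $v \in V(H_1)$, I would pick an arbitrary representative $x_v \in X_v$; since the sets $X_v$ are pairwise disjoint and nonempty, the assignment $\phi \colon v \mapsto x_v$ is an injection from $V(H_1)$ into $V(H_2)$. I claim that $\phi$ induces a (not necessarily induced) subgraph embedding of $\overline{H_1}$ into $\overline{H_2}$, which will prove the proposition. Indeed, fix any edge $uv$ of $\overline{H_1}$; then $uv \notin E(H_1)$, so by the defining property of the induced minor model there are no edges between $X_u$ and $X_v$ in $H_2$. In particular, $x_u x_v \notin E(H_2)$, and therefore $\phi(u)\phi(v) = x_u x_v$ is an edge of $\overline{H_2}$, as required.

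There is no real obstacle here beyond unpacking the definitions: the key observation is the asymmetry between edges and nonedges in the definition of an induced minor model, namely that a nonedge $uv$ of $H_1$ forces \emph{all} possible pairs $(x,y) \in X_u \times X_v$ to be nonedges of $H_2$, while an edge of $H_1$ only guarantees the existence of \emph{some} edge between the corresponding branch sets. This asymmetry is precisely what makes the complement behave well with respect to subgraph containment even though the induced minor operation itself may add edges (through contractions) on the primal side.
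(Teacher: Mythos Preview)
Your argument is correct. The key point---that a nonedge $uv$ of $H_1$ forces $X_u$ to be anticomplete to $X_v$ in $H_2$, so \emph{any} choice of representatives yields a nonedge in $H_2$---is exactly right, and it immediately gives the subgraph embedding of $\overline{H_1}$ into $\overline{H_2}$.

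Your route differs from the paper's. The paper proceeds by induction along a sequence of vertex deletions and edge contractions realizing $H_1$ as an induced minor of $H_2$: it proves a one-step claim (if $H$ is obtained from $G$ by deleting a vertex or contracting an edge, then $\overline{H}$ is a subgraph of $\overline{G}$) by explicitly describing, in the contraction case, which edges of $\overline{G}$ get deleted. You instead invoke the model characterization of induced minors and pick representatives in one shot. Your argument is shorter and isolates the structural reason the statement holds (the asymmetry you highlight between edges and nonedges in the model definition), while the paper's version is more hands-on and makes the edge-by-edge bookkeeping explicit. Both are entirely adequate for the intended application.
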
 
\begin{proof} 
The result follows from the following technical claim via simple induction. 

\begin{quote} 
{\em Claim.} If a graph $H$ is obtained from a graph $G$ by deleting one vertex or contracting one edge, then $\overline{H}$ is isomorphic to a subgraph of $\overline{G}$. 
\end{quote} 
{\em Proof of the Claim.} Fix graphs $H$ and $G$, and assume that $H$ is obtained from $G$ by deleting one vertex or contracting one edge. We must show that $\overline{H}$ is isomorphic to a (not necessarily induced) subgraph of $\overline{G}$. If $H$ is obtained from $G$ by deleting one vertex, then this is obvious. So, assume that $H$ is obtained from $G$ by contracting an edge $xy$ of $G$. Then $\overline{H}$ is isomorphic to the graph obtained from $\overline{G}$ by first deleting $y$ and then deleting all edges between $x$ and $N_G[y] \setminus N_G[x] = N_{\overline{G}}(x) \setminus N_{\overline{G}}(y)$. So, $\overline{H}$ is isomorphic to a subgraph of $\overline{G}$.~$\blacklozenge$  
\end{proof} 

\begin{corollary}\label{cor:GG2}
$\mathcal{M}_{\mathcal{G}_2} = \{\overline{K_2 \cup C_{2k+1}} \mid k \in \mathbb{N}\}$. 
\end{corollary}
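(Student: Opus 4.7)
The plan is to invoke \cref{thm-min-forb-ind-minor} with $\mathcal{C} = \mathcal{C}_2$, for which I first need to verify the hypotheses. The class $\mathcal{C}_2$ is hereditary (an induced subgraph of a graph partitioned into two cliques is itself partitioned into two cliques), closed under edge addition (the same partition into two cliques remains valid after adding an edge, whether inside a part or between the two parts), and closed under the addition of universal vertices (placing the new vertex into either part of a two-clique partition keeps that part a clique). Hence \cref{thm-min-forb-ind-minor} yields $\mathcal{M}_{\mathcal{G}_2} = \{2K_1 \vee H \mid H \in \mathcal{M}_{\mathcal{C}_2}\}$, and the problem reduces to identifying $\mathcal{M}_{\mathcal{C}_2}$.

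My claim is that $\mathcal{M}_{\mathcal{C}_2} = \{\overline{C_{2k+1}} \mid k \geq 1\}$. Recall that a graph $G$ belongs to $\mathcal{C}_2$ if and only if $\overline{G}$ is bipartite, equivalently $\overline{G}$ contains no induced odd cycle. For the first direction, fix $k \geq 1$ and consider $\overline{C_{2k+1}}$. Clearly $\overline{C_{2k+1}} \notin \mathcal{C}_2$. For any proper induced minor $H$ of $\overline{C_{2k+1}}$, the vertex count strictly decreases under any deletion or contraction, so $|V(H)| < 2k+1$; by \cref{prop-ind-minor-complement-sg}, $\overline{H}$ is isomorphic to a subgraph of $C_{2k+1}$ on strictly fewer vertices. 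Any such subgraph is a disjoint union of paths, hence bipartite, and so $H \in \mathcal{C}_2$. Thus $\overline{C_{2k+1}} \in \mathcal{M}_{\mathcal{C}_2}$.

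For the reverse inclusion, let $H \in \mathcal{M}_{\mathcal{C}_2}$. Since $H \notin \mathcal{C}_2$, the complement $\overline{H}$ contains an induced $C_{2k+1}$ for some $k \geq 1$, which means $\overline{C_{2k+1}}$ appears as an induced subgraph of $H$; in particular, $\overline{C_{2k+1}}$ is an induced minor of $H$. As $\overline{C_{2k+1}} \notin \mathcal{C}_2$ and $H$ is minimal (under the induced minor order) with this property, this induced minor cannot be proper, so $H \cong \overline{C_{2k+1}}$. Finally, substituting into \cref{thm-min-forb-ind-minor} and using that complementation interchanges the complete join with the disjoint union, so that $2K_1 \vee \overline{C_{2k+1}} = \overline{K_2 \cup C_{2k+1}}$, gives the desired identity. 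The only nontrivial step is showing that every proper induced minor of $\overline{C_{2k+1}}$ lies in $\mathcal{C}_2$, which is exactly where \cref{prop-ind-minor-complement-sg} does the work by transferring the problem from induced minors of $\overline{C_{2k+1}}$ to subgraphs of $C_{2k+1}$.
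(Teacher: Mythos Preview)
Your proof is correct and follows essentially the same route as the paper: reduce to computing $\mathcal{M}_{\mathcal{C}_2}$ via \cref{thm-min-forb-ind-minor}, identify $\mathcal{C}_2$ with complements of bipartite graphs, and use \cref{prop-ind-minor-complement-sg} to control induced minors of $\overline{C_{2k+1}}$. The only organizational difference is that the paper first argues $\mathcal{C}_2$ is exactly the class of $\{\overline{C_{2k+1}}\}$-induced-minor-free graphs (invoking closure under induced minors from \cref{cor-K2k1}) and then checks pairwise incomparability, whereas you verify the two inclusions $\mathcal{M}_{\mathcal{C}_2} \supseteq \{\overline{C_{2k+1}}\}$ and $\mathcal{M}_{\mathcal{C}_2} \subseteq \{\overline{C_{2k+1}}\}$ directly; both arguments hinge on the same use of \cref{prop-ind-minor-complement-sg}.
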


\begin{proof} 
Note that for all positive integers $k$, we have that $\overline{K_2 \cup C_{2k+1}} \cong 2K_1 \vee \overline{C_{2k+1}}$. So, in view of Theorem~\ref{thm-min-forb-ind-minor}, it is enough to show that $\mathcal{M}_{\mathcal{C}_2} = \{\overline{C_{2k+1}} \mid k \in \mathbb{N}\}$. Note that graphs in $\mathcal{C}_2$ are precisely the complements of bipartite graphs, and it is well known that a graph is bipartite if and only if it contains no odd cycle as an induced subgraph. So, $\mathcal{C}_2$ is precisely the class of all $\{\overline{C_{2k+1}} \mid k \in \mathbb{N}\}$-free graphs. Since $\mathcal{C}_2$ is closed under induced minors (by Corollary~\ref{cor-K2k1}), it follows that $\mathcal{C}_2$ is in fact the class of all $\{\overline{C_{2k+1}} \mid k \in \mathbb{N}\}$-induced-minor-free graphs. It remains to show that (nonisomorphic) graphs in $\{\overline{C_{2k+1}} \mid k \in \mathbb{N}\}$ are incomparable with respect to the induced minor relation. But this follows from Proposition~\ref{prop-ind-minor-complement-sg}, and from the fact that no cycle is a subgraph of a cycle of different length. 
\end{proof}

A graph is \emph{$1$-perfectly-orientable} if it admits an orientation in which the out-neighborhood of each vertex is a clique of the underlying graph.
It was shown by Hartinger and Milani{\v{c}} in~\cite{MR3647815} that all $1$-perfectly-orientable graphs are $\{\overline{K_2 \cup C_{2k+1}} \mid k \in \mathbb{N}\}$-induced-minor-free.
Thus, \cref{cor:GG2} implies that $1$-perfectly-orientable graphs form a subclass of $\mathcal{G}_2$.
Let us also remark that the proof of the mentioned result in~\cite{MR3647815} also gives a proof of the fact that nonisomorphic graphs in  $\{\overline{K_2 \cup C_{2k+1}} \mid k \in \mathbb{N}\}$ are incomparable with respect to the induced minor relation, which, when combined with \Cref{thm-min-forb-ind-minor}, gives an alternative proof of \Cref{cor:GG2}.

\subsection{Algorithmic considerations}

It was shown by Dallard et al.~in~\cite{TreeDecompBoundAlpha} that, for each positive integer $k$, the \textsc{Maximum Weight Stable Set} problem can be solved in $\mathcal{O}(n^{2k})$ time for $n$-vertex $K_{2,k}$-induced-minor-free graphs.
To connect this result with the classes $\mathcal{G}_k$, by \cref{cor-K2k1} every graph in $\mathcal{G}_k$ is $K_{2,k+1}$-induced-minor-free.
Therefore, the result by Dallard et al.~implies the following.

\begin{corollary}\label{MWIS-GG_k}
For each integer $k \geq 0$, the \textsc{Maximum Weight Stable Set} problem can be solved in $\mathcal{O}(n^{2k+2})$ time for $n$-vertex graphs in $\mathcal{G}_k$.
\end{corollary}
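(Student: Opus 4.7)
The plan is to obtain this corollary as an immediate consequence of two ingredients already in place: the result of Dallard et al.~\cite{TreeDecompBoundAlpha}, which states that on $n$-vertex $K_{2,k}$-induced-minor-free graphs the \textsc{Maximum Weight Stable Set} problem is solvable in $\mathcal{O}(n^{2k})$ time, and \cref{cor-K2k1}, which asserts that every graph in $\mathcal{G}_k$ is $K_{2,k+1}$-induced-minor-free.

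The proof would consist of a single combination step: given an $n$-vertex graph $G \in \mathcal{G}_k$, apply \cref{cor-K2k1} to conclude that $G$ is $K_{2,k+1}$-induced-minor-free, and then invoke the Dallard et al.\ algorithm with parameter $k+1$ in place of $k$. The resulting running time is $\mathcal{O}(n^{2(k+1)}) = \mathcal{O}(n^{2k+2})$, matching the bound claimed in the statement.

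There is essentially no obstacle here, since the heavy lifting is done by the cited algorithmic result and by the structural inclusion $\mathcal{G}_k \subseteq \{\text{$K_{2,k+1}$-induced-minor-free graphs}\}$. The only minor subtlety worth noting in the written proof is that the algorithm of~\cite{TreeDecompBoundAlpha} does not require a witness (e.g., a tree decomposition) to be supplied as input, so no preprocessing step is needed; the substitution $k \mapsto k+1$ is safe for every $k \geq 0$, including the degenerate cases $k=0$ and $k=1$, where the bound is anyway consistent with the linear-time algorithms for $\mathcal{G}_0$ and $\mathcal{G}_1$ mentioned after \cref{table1}.
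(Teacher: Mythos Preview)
Your proposal is correct and matches the paper's own argument essentially verbatim: the paper also derives the corollary by combining \cref{cor-K2k1} (graphs in $\mathcal{G}_k$ are $K_{2,k+1}$-induced-minor-free) with the $\mathcal{O}(n^{2k})$ algorithm of Dallard et al.\ for $K_{2,k}$-induced-minor-free graphs, applied with $k+1$ in place of $k$.
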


Similar results hold for a number of other related problems, including the \textsc{Maximum Induced Matching} problem, the \textsc{Dissociation Set} problem, etc. We refer to~\cite{dallard2021treewidth} for the details.

Furthermore, Dallard et al.~showed in~\cite{MR4334541} that in any class of $K_{2,k}$-induced-minor-free graphs, the treewidth of the graphs in the class is bounded from above by some polynomial function of the clique number (see also~\cite{TreeDecompBoundAlpha}).
Combining this with a result of Chaplick et al.~\cite[Theorem 12]{MR4332111}, it follows that for any two positive integers $k$ and $\ell$, the \textsc{$\ell$-Coloring} problem is solvable in time $\mathcal{O}(n)$ in the class of $n$-vertex $K_{2,k}$-induced-minor-free graphs, and thus in the class $\mathcal{G}_k$ as well.
(The $\mathcal{O}$-notation hides a constant depending on $k$ and $\ell$.)
The same result holds in fact for the more general \textsc{List $\ell$-Coloring} problem, in which every vertex is equipped with a list of available colors from the set $\{1,\ldots, \ell\}$.

\section{Vertex neighborhoods} \label{sec:k-simplicial}

Recall that a vertex $v$ in a graph $G$ is \emph{$k$-simplicial} it its neighborhood is a union of $k$ cliques.
Given a graph $G$ and an integer $k\ge 0$, a \emph{$k$-simplicial elimination ordering} of $G$ is an ordering $v_1,\dots,v_n$ of the vertices of $G$ such that for all $i \in \{1,\dots,n\}$, $v_i$ is $k$-simplicial in the graph $G[v_1,\dots,v_i]$
For $k = 1$, resp.~$k = 2$, a $k$-simplicial elimination ordering is also called a \emph{perfect elimination ordering}, resp.~a \emph{bisimplicial elimination ordering}.

A classical result due to Dirac~\cite{D61} states that every nonnull chordal graph has a simplicial vertex.
An alternative proof of this result was given by Rose, Tarjan, and Lueker~\cite{rose1976algorithmic}, who showed that every LexBFS ordering of a chordal graph is a perfect elimination ordering.
In this section we generalize these results by showing that for every positive integer $k$, every LexBFS ordering of a graph in $\mathcal{G}_k$ is a $k$-simplicial elimination ordering.
In particular, this shows that every nonnull graph in $\mathcal{G}_k$ has a $k$-simplicial vertex.
We also examine some algorithmic consequences of these results for the case $k = 2$.

\begin{sloppypar}
For a family $\mathcal{F}$ of graphs, an ordering $v_1,\ldots,v_n$ of the vertices of a graph $G$ is an \emph{\hbox{${\cal F}$-elimination} ordering} if for every index $i \in \{1,\ldots,n\}$, the graph $G[N_{G[v_1,\ldots,v_i]}(v_i)]$ is ${\cal F}$-free.
Note that a graph $G$ admits an ${\cal F}$-elimination ordering if and only if every nonnull induced subgraph of $G$ contains a vertex whose neighborhood induces an ${\cal F}$-free subgraph in $G$.
\end{sloppypar}

LexBFS is a linear-time algorithm of Rose, Tarjan, and Lueker~\cite{rose1976algorithmic} whose input is any nonnull graph $G$ together with a vertex $s \in V(G)$, and whose output is an ordering of the vertices of $G$ starting at $s$.
It is a restricted version of Breadth First Search, where the usual queue of vertices is replaced by a queue of unordered subsets of the vertices, which is sometimes refined, but never reordered (for details, see~\cite{rose1976algorithmic}).
An ordering of the vertices of a graph $G$ is a \emph{LexBFS ordering} if there exists a vertex $s$ of $G$ such that the ordering can be produced by LexBFS when the input is $G,s$.

In certain cases, ${\cal F}$-elimination orderings can be found using LexBFS.
This relies on the concept of locally ${\cal F}$-decomposable graphs and graph classes, introduced by Aboulker et al.~in~\cite{aboulker2015vertex}.
Let ${\cal F}$ be a family of graphs.
A graph $G$ is \emph{locally ${\cal F}$-decomposable} if for every vertex $v$ of $G$, every $F\in {\cal F}$ contained, as an induced subgraph, in $G[N_G(v)]$ and every component $C$ of $G\setminus N_G[v]$, there exists $y\in V(F)$ such that $y$ has a nonneighbor in $F$ and has no neighbors in $C$.
A class of graphs ${\cal G}$ is \emph{locally ${\cal F}$-decomposable} if every graph $G\in {\cal G}$ is a locally ${\cal F}$-decomposable graph.

\begin{theorem}[Aboulker et al.~\cite{aboulker2015vertex}] \label{t1}
If ${\cal F}$ is a family of noncomplete graphs and $G$ is a locally ${\cal F}$-decomposable graph, then every LexBFS ordering of $G$ is an ${\cal F}$-elimination ordering.
\end{theorem}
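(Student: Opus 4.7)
The approach is a proof by contradiction that combines the local $\mathcal{F}$-decomposability hypothesis with the classical 4-point characterization of LexBFS orderings: in any LexBFS ordering $\sigma$ of a graph $G$, if $a<b<c$ in $\sigma$ with $ac\in E(G)$ and $ab\notin E(G)$, then some vertex $d$ precedes $a$ in $\sigma$ and satisfies $db\in E(G)$ and $dc\notin E(G)$. I will refer to this as property~(LB); it is the only LexBFS-specific input to the argument.

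Suppose $\sigma=(v_1,\ldots,v_n)$ is a LexBFS ordering of $G$ that is not an $\mathcal{F}$-elimination ordering, and choose the smallest index $i$ for which $G[N_G(v_i)\cap\{v_1,\ldots,v_{i-1}\}]$ contains some $F\in\mathcal{F}$ as an induced subgraph; set $v:=v_i$ and identify $F$ with its copy, so that $V(F)\subseteq N_G(v)\cap\{v_1,\ldots,v_{i-1}\}$. Since $F$ is noncomplete, there exist $a,b\in V(F)$ with $ab\notin E(G)$; relabel so that $a<b$ in $\sigma$. Applying (LB) to $(a,b,v)$ produces a vertex $d<a$ with $db\in E(G)$ and $dv\notin E(G)$, so $d$ lies in some component $C$ of $G\setminus N_G[v]$. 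The local $\mathcal{F}$-decomposability hypothesis, invoked at the triple $(v,F,C)$, furnishes a vertex $y\in V(F)$ that has a nonneighbor $y'\in V(F)$ and no neighbor in $C$; in particular $y\neq b$, since $bd\in E(G)$.

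To finish, I plan to iterate the construction. The pair $(y,y')$ is again a nonadjacent pair in $V(F)\subseteq N_G(v)$; without loss of generality $y<y'$ in $\sigma$. Applying (LB) to $(y,y',v)$ yields a new witness $d'<y$ with $d'y'\in E(G)$ and $d'v\notin E(G)$, which lies in some component $C'$ of $G\setminus N_G[v]$. A fresh application of local $\mathcal{F}$-decomposability at $(v,F,C')$ produces another safe vertex of $V(F)$, and so on. The positions of the witnesses $d,d',d'',\ldots$ strictly decrease in $\sigma$, so the chain must terminate. The main obstacle, and where I expect the bulk of the work to lie, is engineering this iteration so that its termination produces a genuine contradiction: one tracks the components $C^{(k)}$ the witnesses land in and the pool of vertices of $V(F)$ that remain ``safe'' in the sense of being nonadjacent to all previous $d^{(k)}$ while still having a nonneighbor in $F$, and one aims to show either that this pool becomes empty for some $C^{(k)}$—directly contradicting local $\mathcal{F}$-decomposability—or that some witness $d^{(k)}$ is itself an index smaller than $i$ at which the past neighborhood contains a member of $\mathcal{F}$, contradicting the minimality of $i$.
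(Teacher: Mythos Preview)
The paper does not give its own proof of this theorem: it is stated with attribution to Aboulker, Charbit, Trotignon, and Vu\v{s}kovi\'c and then used as a black box in the proof of \cref{thm-GC-vertex-neighborhood-in-C}. So there is nothing in the paper to compare your proposal against.

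As an independent assessment: your ingredients are exactly the right ones, and the overall shape---combine the LexBFS four-point property (LB) with repeated applications of local $\mathcal{F}$-decomposability---is indeed how the original proof proceeds. However, one concrete assertion in your sketch is unjustified and, as stated, need not hold: you claim that the witnesses $d,d',d'',\ldots$ strictly decrease in $\sigma$. From the first application you only get $d<a$, and from the second you only get $d'<\min(y,y')$; since $y$ is produced by local $\mathcal{F}$-decomposability and can be any vertex of $V(F)$ other than $b$, there is no reason that $\min(y,y')\le d$, hence no reason that $d'<d$. Likewise, the ``pool'' you describe need not shrink: the vertex $y_2$ produced at the next step for the component $C'$ could coincide with $y_1$, or could be adjacent to the earlier witness $d$. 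So neither termination nor the eventual contradiction follows from the iteration as you set it up.

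The fix in the original argument is not to iterate over arbitrary nonadjacent pairs in $F$, but to anchor the argument at a carefully chosen vertex outside $N_G[v]$ (essentially the non-neighbour of $v$ that is visited last before $v$) and the component containing it, and then to derive a single contradiction from (LB) and the vertex $y$ supplied by local $\mathcal{F}$-decomposability for that component. If you want to keep an iterative formulation, you need an invariant that genuinely decreases---for instance, the $\sigma$-position of the earliest vertex of $V(F)$ that still has a neighbour in the current component---rather than the positions of the witnesses $d^{(k)}$ themselves.
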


For a hereditary graph class $\mathcal{C}$, we denote by $\mathcal{F}_{\mathcal{C}}$ the class of all graphs $G$ such that $G$ does not belong to $\mathcal{C}$, but all proper induced subgraphs of $G$ do belong to $\mathcal{C}$.

\begin{theorem} \label{thm-GC-vertex-neighborhood-in-C}
Let $\mathcal{C}$ be a hereditary class of graphs that is closed under the addition of universal vertices.
Then, for every graph $G \in \mathcal{G}_{\mathcal{C}}$, every LexBFS ordering of $G$ is an $\mathcal{F}_{\mathcal{C}}$-elimination ordering.
\end{theorem}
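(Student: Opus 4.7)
The plan is to apply \cref{t1} with $\mathcal{F} = \mathcal{F}_{\mathcal{C}}$. Observe first that a graph $H$ is $\mathcal{F}_{\mathcal{C}}$-free if and only if $H \in \mathcal{C}$, so an $\mathcal{F}_{\mathcal{C}}$-elimination ordering is precisely what the statement asks for. Two preliminary observations make \cref{t1} applicable. First, since $\mathcal{C}$ is hereditary and closed under adding universal vertices, iterating that operation starting from the null graph shows that $\mathcal{C}$ contains every complete graph; in particular, every graph in $\mathcal{F}_{\mathcal{C}}$ is noncomplete, which is the hypothesis required by \cref{t1}. Second, by exactly the argument used in the first paragraph of the proof of \cref{lemma-GC-incomparable}, no graph in $\mathcal{F}_{\mathcal{C}}$ contains a universal vertex (if $u$ were universal in some $F \in \mathcal{F}_{\mathcal{C}}$, then $F \setminus u \in \mathcal{C}$ would give $F \in \mathcal{C}$, a contradiction).

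The main step is to verify that every $G \in \mathcal{G}_{\mathcal{C}}$ is locally $\mathcal{F}_{\mathcal{C}}$-decomposable. Fix $v \in V(G)$, an induced subgraph $F$ of $G[N_G(v)]$ with $F \in \mathcal{F}_{\mathcal{C}}$, and a component $C$ of $G \setminus N_G[v]$; I must produce a vertex $y \in V(F)$ having a nonneighbor in $F$ and no neighbor in $V(C)$. Since $F$ has no universal vertex, the ``nonneighbor in $F$'' requirement is automatic for any $y \in V(F)$, so it suffices to find some $y \in V(F)$ with no neighbor in $V(C)$.

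I would argue by contradiction: assume every vertex of $V(F)$ has a neighbor in $V(C)$, and pick any $c \in V(C)$. Then $c$ and $v$ are nonadjacent, and since $N_G(v)$ is a $(c,v)$-separator in $G$, it contains a minimal $(c,v)$-separator $S$. Let $A$ and $B$ be the components of $G \setminus S$ containing $v$ and $c$, respectively. Since $S \subseteq N_G(v)$ is disjoint from $V(C)$ and $G[V(C)]$ is connected, $V(C) \subseteq B$. On the other hand, $V(F) \subseteq N_G(v) \subseteq A \cup S$, and because $A$ is anticomplete to $B$ in $G \setminus S$, the assumption that every vertex of $V(F)$ has a neighbor in $V(C) \subseteq B$ forces $V(F) \subseteq S$. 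Since $G \in \mathcal{G}_{\mathcal{C}}$ gives $G[S] \in \mathcal{C}$ and $\mathcal{C}$ is hereditary, this yields $F = G[V(F)] \in \mathcal{C}$, contradicting $F \in \mathcal{F}_{\mathcal{C}}$. With local $\mathcal{F}_{\mathcal{C}}$-decomposability established, \cref{t1} delivers the conclusion.

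The anticipated obstacle is the choice of separator: we need to guarantee that the entire component $V(C)$ lies on one specified side of $S$, and this is what choosing a minimal $(c,v)$-separator already contained in $N_G(v)$ buys us. Everything else is routine manipulation of the definitions.
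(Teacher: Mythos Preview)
Your proposal is correct and follows essentially the same approach as the paper: both arguments verify the hypotheses of \cref{t1} by showing that graphs in $\mathcal{F}_{\mathcal{C}}$ have no universal vertex (hence are noncomplete) and that every $G \in \mathcal{G}_{\mathcal{C}}$ is locally $\mathcal{F}_{\mathcal{C}}$-decomposable via a minimal $(v,c)$-separator contained in $N_G(v)$. Your write-up spells out a few details the paper leaves implicit (e.g., why $V(C)$ lies entirely in the component $B$ and why $V(F)\subseteq S$), but the underlying idea is identical.
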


\begin{proof}
First, we note that no graph in $\mathcal{F}_{\mathcal{C}}$ contains a universal vertex (and thus, $\mathcal{F}_{\mathcal{C}}$ is a family of noncomplete graphs). Indeed, if some $F \in \mathcal{F}_{\mathcal{C}}$ contained a universal vertex $u$, then the definition of $\mathcal{F}_{\mathcal{C}}$ would imply that $F \setminus u$ belongs to $\mathcal{C}$, and since $\mathcal{C}$ is closed under the addition of universal vertices, it would follow that $F \in {\mathcal{C}}$, a contradiction. 

Now, fix a graph $G \in \mathcal{G}_{\mathcal{C}}$.
We claim that $G$ is locally $\mathcal{F}_{\mathcal{C}}$-decomposable.
Consider a vertex $x \in V(G)$. 
Suppose that $F$ is an induced subgraph of $G[N_G(x)]$ such that $F \in \mathcal{F}_{\mathcal{C}}$. 
By the above, $F$ does not contain a universal vertex, and consequently, every vertex of $F$ has a nonneighbor in $F$. 
Let $C$ be a component of $G \setminus N_G[x]$; we must show that some vertex in $F$ is anticomplete to $V(C)$. 
Suppose otherwise, that is, suppose that every vertex in $V(F)$ has a neighbor in $V(C)$. Let $z \in V(C)$. Clearly, $N_G(x)$ is an $(x,z)$-separator of $G$, and moreover, any minimal $(x,z)$-separator of $G$ included in $N_G(x)$ includes $V(F)$; since $G \in \mathcal{G}_{\mathcal{C}}$ and $\mathcal{C}$ is hereditary, it follows that $F \in \mathcal{C}$, contrary to the fact that $F \in \mathcal{F}_{\mathcal{C}}$. Thus, some vertex in $V(F)$ is indeed anticomplete to $V(C)$. It follows that $G$ is locally $\mathcal{F}_{\mathcal{C}}$-decomposable, and so by \cref{t1}, every LexBFS ordering of $G$ is an $\mathcal{F}_{\mathcal{C}}$-elimination ordering.
This completes the proof.
\end{proof}

The reader may wonder whether, in \cref{thm-GC-vertex-neighborhood-in-C}, it might be possible to eliminate the hypothesis that $\mathcal{C}$ is closed under the addition of universal vertices.
This would in fact not be possible (at least not without adding some other, perhaps weaker, hypothesis).
To see this, fix any positive integer $\ell$, and any hereditary class $\mathcal{C}$ that does not contain $K_{\ell}$. The class $\mathcal{G}_{\mathcal{C}}$ contains all complete graphs, and in particular, $K_{\ell+1} \in \mathcal{G}_{\mathcal{C}}$. However, the neighborhood of any vertex of $K_{\ell+1}$ induces a $K_{\ell}$, and $K_{\ell} \notin \mathcal{C}$.

\begin{corollary} \label{cor-GGk-simplicial}
Let $k$ be a positive integer.
Then, for every graph $G$ in $\mathcal{G}_k$,
every LexBFS ordering of $G$ is a $k$-simplicial elimination ordering.
\end{corollary}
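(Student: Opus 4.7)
The plan is to deduce \cref{cor-GGk-simplicial} directly from \cref{thm-GC-vertex-neighborhood-in-C} applied with $\mathcal{C}=\mathcal{C}_k$, and then translate the resulting $\mathcal{F}_{\mathcal{C}_k}$-elimination ordering into the language of $k$-simplicial elimination orderings.

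First I would verify the two hypotheses on $\mathcal{C}_k$. Heredity is immediate: restricting a partition $V(G)=C_1\cup\dots\cup C_k$ into cliques to any subset of vertices yields a partition of that subset into $k$ cliques. Closure under the addition of a universal vertex is equally easy: if $u$ is added as a universal vertex to $G\in\mathcal{C}_k$, then inserting $u$ into any one part of an existing $k$-clique partition preserves the partition, since $u$ is adjacent to every other vertex. Recalling that $\mathcal{G}_k=\mathcal{G}_{\mathcal{C}_k}$, \cref{thm-GC-vertex-neighborhood-in-C} will then tell me that for every $G\in\mathcal{G}_k$, every LexBFS ordering of $G$ is an $\mathcal{F}_{\mathcal{C}_k}$-elimination ordering.

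Next I would match the definitions. Since $\mathcal{C}_k$ is hereditary, a graph $H$ is $\mathcal{F}_{\mathcal{C}_k}$-free if and only if $H\in\mathcal{C}_k$: if $H\notin\mathcal{C}_k$, then any smallest induced subgraph of $H$ outside $\mathcal{C}_k$ lies in $\mathcal{F}_{\mathcal{C}_k}$; conversely, if $H\in\mathcal{C}_k$, heredity forbids $H$ from containing any member of $\mathcal{F}_{\mathcal{C}_k}$ as an induced subgraph. Applied to $H=G[N_{G[v_1,\dots,v_i]}(v_i)]$, this equivalence says precisely that $v_i$'s neighborhood inside $G[v_1,\dots,v_i]$ is a union of $k$ cliques, which is the defining condition for $v_i$ to be $k$-simplicial in $G[v_1,\dots,v_i]$. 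Chaining the two equivalences yields the corollary.

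I do not anticipate any substantive obstacle: both the verification of the hypotheses of \cref{thm-GC-vertex-neighborhood-in-C} and the translation between $\mathcal{F}_{\mathcal{C}_k}$-freeness and $k$-simplicial vertex neighborhoods are one-step observations, and all the real work has been done upstream in \cref{thm-GC-vertex-neighborhood-in-C}.
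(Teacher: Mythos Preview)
Your proposal is correct and follows essentially the same approach as the paper: verify that $\mathcal{C}_k$ is hereditary and closed under adding a universal vertex, then invoke \cref{thm-GC-vertex-neighborhood-in-C}. You are slightly more explicit than the paper in spelling out why an $\mathcal{F}_{\mathcal{C}_k}$-elimination ordering is the same thing as a $k$-simplicial elimination ordering, but the argument is identical in substance.
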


\begin{proof}
Fix an integer $k\ge 1$.
Recall that $\mathcal{C}_k$ is the class of graphs whose vertex set can be partitioned into $k$ cliques.
Let $G\in \mathcal{C}_k$, and let $C_1,\ldots, C_k$ be $k$ cliques in $G$ partitioning the vertex set of $G$.
If $G'$ is the graph obtained by adding a universal vertex $u$ to $G$, then $C_1\cup\{u\}, C_2, \ldots, C_k$ are $k$ cliques in $G'$ forming a partition of the vertex set of $G'$.
It follows that $\mathcal{C}_k$ is closed under the addition of universal vertices. 
By \cref{thm-GC-vertex-neighborhood-in-C}, for every graph $G$ in $\mathcal{G}_k$, every LexBFS ordering of $G$ is a $k$-simplicial elimination ordering.
\end{proof}

\begin{corollary} \label{cor-GGk-simplicial-2}
For every positive integer $k$, every nonnull graph in $\mathcal{G}_k$ has a $k$-simplicial vertex.
\end{corollary}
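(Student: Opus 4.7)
The plan is to derive the corollary directly from \cref{cor-GGk-simplicial}, which states that every LexBFS ordering of a graph in $\mathcal{G}_k$ is a $k$-simplicial elimination ordering. Since $G$ is nonnull, I can pick any vertex $s \in V(G)$ and run LexBFS starting at $s$; this produces a LexBFS ordering $v_1, \ldots, v_n$ of $V(G)$.

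By \cref{cor-GGk-simplicial}, this ordering is a $k$-simplicial elimination ordering of $G$. Unpacking the definition from the beginning of \cref{sec:k-simplicial}, this means that for every $i \in \{1, \ldots, n\}$, the vertex $v_i$ is $k$-simplicial in the induced subgraph $G[v_1, \ldots, v_i]$. Specializing to $i = n$, the vertex $v_n$ is $k$-simplicial in $G[v_1, \ldots, v_n] = G$, so $v_n$ is a $k$-simplicial vertex of $G$, as required.

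Since the statement is essentially a restatement of the last index case of \cref{cor-GGk-simplicial}, there is no real obstacle; the only point that requires a sentence of justification is the observation that LexBFS can indeed be invoked on any nonnull input (it requires a starting vertex, which exists precisely because $V(G) \neq \emptyset$).
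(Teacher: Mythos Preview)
Your proof is correct and matches the paper's intended reasoning: the paper states this corollary immediately after \cref{cor-GGk-simplicial} without any explicit proof, precisely because taking the last vertex of a LexBFS ordering (which exists since $G$ is nonnull) yields a $k$-simplicial vertex of $G$.
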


\begin{remark}
\Cref{cor-GGk-simplicial} can also be obtained from the fact that every nonnull graph $G$ contains a \emph{moplex}~\cite{moplex}, that is, a clique $C$ such that every two vertices in $C$ have the same closed neighborhood and the neighborhood of $C$ is either empty or a minimal separator of $G$.
(In fact, the last vertex visited by any execution of LexBFS on $G$ necessarily belongs to a moplex.)
Given a graph $G\in \mathcal{G}_k$ and a vertex $v$ that belongs to a moplex $C$ of $G$, there exist $k$ cliques $C_1,\ldots, C_k$ covering the neighborhood of $C$.
But then $C_1,\ldots, C_k\cup(C\setminus\{v\})$ are $k$ cliques covering $N_G(v)$, showing that $v$ is a $k$-simplicial vertex.
\end{remark}

\begin{remark}
For $k>1$, the statement of \Cref{cor-GGk-simplicial} does not generalize to the class of graphs that admit a $k$-simplicial elimination ordering.
To see this, let $G$ be the complete bipartite graph $K_{2,k+1}$.
Then, $G$ admits a bisimplicial elimination ordering obtained by placing the two vertices of degree $k+1$ before all the $k+1$ the vertices of degree $2$ in the ordering.
On the other hand, any LexBFS ordering of $G$ starting at a vertex with degree $k+1$ will end in the other vertex of degree $k+1$, which is not a $k$-simplicial vertex in $G$.
\end{remark}

\subsection{Algorithmic implications}

The \textsc{Maximum Weight Clique} problem can be solved in polynomial time for $n$-vertex graphs that admit a bisimplicial elimination ordering, see~\cite{ElimGraphs}.
The algorithm iteratively removes bisimplicial vertices and reduces the problem to solving $n$ instances of the \textsc{Maximum Weight Stable Set} problem in bipartite graphs.
The polynomial running time of the algorithm given in~\cite{ElimGraphs} was based on polynomial-time solvability of the \textsc{Maximum Weight Stable Set} problem in the class of perfect graphs.
Using maximum flow techniques, an improved running time of $\mathcal{O}(n^4)$ can be achieved, see~\cite{MR4357319}.
For graphs in $\mathcal{G}_2$, a further improvement can be obtained using LexBFS and recent developments on maximum flow algorithms.

\begin{theorem}\label{MWC-G22}
For every $\epsilon>0$, the \textsc{Maximum Weight Clique} problem can be solved in $\mathcal{O}(n^{3+\epsilon})$ time for $n$-vertex graphs in $\mathcal{G}_2$.
\end{theorem}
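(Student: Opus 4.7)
The plan is to combine the bisimplicial elimination ordering guaranteed by \cref{cor-GGk-simplicial} with the standard reduction from maximum weight clique in co-bipartite graphs to maximum weight independent set in bipartite graphs, and then invoke a state-of-the-art almost-linear-time maximum flow subroutine. The framework is already implicit in the elimination-ordering algorithm of \cite{ElimGraphs,MR4357319}; what improves the running time for $\mathcal{G}_2$ is that the elimination ordering itself can be produced in near-linear time by a single LexBFS, and that each of the $n$ resulting bipartite subproblems can be solved in close to its edge-count time.

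Concretely, I would first run LexBFS on $G$ to obtain, in $\mathcal{O}(n+m) = \mathcal{O}(n^2)$ time, a bisimplicial elimination ordering $v_1,\ldots,v_n$ of $V(G)$. Writing $G_i := G[v_1,\ldots,v_i]$, every nonempty clique of $G$ is a clique of some $G_i$ containing $v_i$, so it is enough to compute, for each $i$, the maximum weight $\alpha_i$ of a (possibly empty) clique of $G_i[N_{G_i}(v_i)]$; the output is then $\max_i (w(v_i)+\alpha_i)$, compared against the weight of the empty clique.

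For a fixed $i$, bisimpliciality of $v_i$ in $G_i$ means that $G_i[N_{G_i}(v_i)]$ is co-bipartite, and a partition $N_{G_i}(v_i) = C_1 \cup C_2$ into two cliques can be extracted in $\mathcal{O}(n^2)$ time by $2$-coloring each connected component of its complement. Every clique of $G_i[N_{G_i}(v_i)]$ now decomposes as $A_1 \cup A_2$ with $A_j \subseteq C_j$ and $A_1$ complete to $A_2$ in $G_i$, equivalently as an independent set of the bipartite graph $H_i$ with parts $C_1, C_2$ whose edges are the non-edges of $G_i$ between $C_1$ and $C_2$. So $\alpha_i$ equals the maximum weight of an independent set of the vertex-weighted bipartite graph $H_i$, which by the weighted König relation reduces to a minimum weight vertex cover, and thence to a single $s$-$t$ maximum flow on a capacitated network with $\mathcal{O}(n)$ vertices and $\mathcal{O}(n^2)$ edges.

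Plugging in the almost-linear-time maximum flow algorithm of Chen, Kyng, Liu, Peng, Probst Gutenberg, and Sachdeva, each bipartite subproblem is solved in $\mathcal{O}(n^{2+o(1)})$ time. Summing over the $n$ indices $i$ yields total running time $\mathcal{O}(n^{3+o(1)}) \subseteq \mathcal{O}(n^{3+\epsilon})$ for every $\epsilon > 0$; the LexBFS preprocessing and all $n$ co-bipartite $2$-colorings fit comfortably within this budget. The only genuine obstacle is the appeal to the state-of-the-art max-flow subroutine and the bookkeeping to keep each invocation to $\mathcal{O}(n^2)$-sized input; everything else is a routine assembly of classical reductions.
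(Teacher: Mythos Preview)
Your proposal is correct and follows essentially the same route as the paper: compute a LexBFS ordering (which is a bisimplicial elimination ordering by \cref{cor-GGk-simplicial}), reduce each of the $n$ local subproblems to maximum-weight stable set in a bipartite graph via complementation, and solve each in $\mathcal{O}(n^{2+o(1)})$ time using the almost-linear max-flow algorithm of Chen et al. The only cosmetic differences are that you spell out the $2$-coloring of the complement and the K\"onig step explicitly, whereas the paper simply cites a reference for the bipartite-MWIS-to-max-flow reduction.
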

\begin{proof}
Let $G$ be an $n$-vertex graph in $\mathcal{G}_2$.
In time  $\mathcal{O}(n^{2})$, we compute a LexBFS ordering $v_1,\ldots, v_n$ of $G$.
By \cref{cor-GGk-simplicial}, $v_1,\ldots, v_n$ is a bisimplicial elimination ordering of $G$.
For each $i\in \{1,\ldots, n\}$, let $G_i$ be the graph induced by the closed neighborhood of $v_i$ in the graph $G[v_1,\dots,v_i]$. 
We will show that for each $i\in \{1,\ldots, n\}$, we can compute a maximum-weight clique $C_i$ in $G_i$ in time $\mathcal{O}(n^{2+\epsilon})$.
This will suffice, since a clique $C_i$ with maximum total weight is also a clique in $G$ with maximum total weight.

Fix an $i\in \{1,\ldots, n\}$.
A maximum-weight clique in the graph $G_i$ consists of $v_i$ and a maximum-weight stable set in the complement of the graph $G_i-v_i$.
Since $v_i$ is a bisimplicial vertex in the graph $G[v_1,\dots,v_i]$, the complement of the graph $G_i-v_i$ is bipartite.
The problem of finding a maximum-weight stable set in a vertex-weighted bipartite graph can be reduced in linear time to a maximum flow problem in a derived network (see, e.g.,~\cite{MR2221557}).
Using a recent result due to Chen et al.~\cite{MR4537240} showing that the maximum flow problem can be solved in almost linear time, we conclude that a maximum-weight independent set in the complement of the graph $G_i-v_i$ can be computed in time $\mathcal{O}(n^{2+\epsilon}).$
The claimed $\mathcal{O}(n^{3+\epsilon})$ overall time complexity follows.
\end{proof}

As we will show in the next section (more specifically in \cref{thm-max-clique-\NP-hard}), this result cannot be generalized to graphs in $\mathcal{G}_k$ for $k\ge 3$,  unless P = NP.

\section{\NP-hardness results for $\mathcal{G}_k$, $k \geq 3$} \label{sec:recGGK}

In this section we prove that for all $k \geq 3$, it is \NP-hard to recognize graphs in $\mathcal{G}_k$.
The time complexity of recognizing graphs in $\mathcal{G}_2$ is still unknown.
We also show that the \textsc{Maximum Clique} problem is \NP-hard for $\mathcal{G}_3$ (and consequently for $\mathcal{G}_k$ whenever $k \geq 3$).

Recall that for an integer $k$, the \textsc{$k$-Coloring} problem is the problem of determining whether the input graph is $k$-colorable.
It is well known that for any integer $k \geq 3$, the \textsc{$k$-Coloring} problem is \NP-complete; we prove the \NP-hardness of recognizing graphs in $\mathcal{G}_k$ ($k \geq 3$) by a reduction from this problem.
We begin with a technical proposition.

\begin{proposition} \label{prop-kCol-red}
Let $k \geq 0$ be an integer, and let $G$ be a graph.
Let $G'$ be the graph obtained from $G$ by adding two new, nonadjacent vertices, and making them adjacent to all vertices of $G$.
Then $\chi(\overline{G}) \leq k$ if and only if $G' \in \mathcal{G}_k$.
\end{proposition}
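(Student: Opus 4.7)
My plan is to reduce the problem to a precise description of the minimal separators of $G'$. Let $a,b$ denote the two new vertices, so $G'$ is exactly $2K_1 \vee G$ with $\{a,b\}$ forming the $2K_1$. The first key observation is that $V(G)$ is a minimal $(a,b)$-separator of $G'$: it is a separator, since the only neighbors of $a$ and $b$ in $G'$ lie in $V(G)$; and for every $v \in V(G)$, the path $a$-$v$-$b$ certifies that $V(G)\setminus\{v\}$ is not. Since $G'[V(G)] = G$, this already settles the forward direction: if $G' \in \mathcal{G}_k$, then $V(G)$ must be a union of $k$ cliques of $G$, which is equivalent to $\chi(\overline{G}) \leq k$.

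For the reverse direction, I would classify all minimal separators of $G'$. Any nonadjacent pair in $G'$ is either $\{a,b\}$ or a pair $\{x,y\} \subseteq V(G)$ with $xy \notin E(G)$, because $a$ and $b$ are each complete to $V(G)$. The minimal $(a,b)$-separators of $G'$ reduce to $\{V(G)\}$ by the argument above. For nonadjacent $x,y \in V(G)$, any $(x,y)$-separator of $G'$ must contain both $a$ and $b$ (to block the paths $x$-$a$-$y$ and $x$-$b$-$y$); a short verification then shows the minimal $(x,y)$-separators of $G'$ are exactly the sets $S_0 \cup \{a,b\}$, where $S_0$ ranges over the minimal $(x,y)$-separators of $G$.

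It remains to check that, assuming $\chi(\overline{G}) \leq k$, every such minimal separator is a union of $k$ cliques in $G'$. The separator $V(G)$ is so by hypothesis. For a separator of the form $S_0 \cup \{a,b\}$, when $k = 0$ or $k = 1$ the graph $G$ is null or complete respectively, so nonadjacent pairs in $V(G)$ (and hence such separators) do not exist, and the claim is vacuous. When $k \geq 2$, hereditariness of $\mathcal{C}_k$ lets me write $S_0 = C_1' \cup \cdots \cup C_k'$ with each $C_i'$ a clique of $G$; then $C_1' \cup \{a\}$, $C_2' \cup \{b\}$, $C_3', \ldots, C_k'$ are cliques of $G'$ (each of $a,b$ is complete to $V(G)$) whose union is $S_0 \cup \{a,b\}$, giving $G'[S_0 \cup \{a,b\}] \in \mathcal{C}_k$.

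I do not anticipate any genuine obstacle: the argument is essentially a structural description of the minimal separators of $2K_1 \vee G$, followed by an explicit covering. The only subtle point is that when $k \leq 1$ one cannot split $a$ and $b$ into two distinct cliques, but this is precisely the regime where the separators requiring such a split do not arise.
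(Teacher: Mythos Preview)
Your proof is correct and follows essentially the same approach as the paper's: both identify $V(G)$ as the unique minimal $(a,b)$-separator for the forward direction, and for the reverse direction both observe that any minimal $(x,y)$-separator with $x,y\in V(G)$ must contain $a$ and $b$, then absorb $a$ and $b$ into two of the $k$ cliques covering the rest (handling $k\le 1$ by noting no such pair $x,y$ exists). The only minor difference is that you take the extra step of characterizing the minimal $(x,y)$-separators of $G'$ precisely as $S_0\cup\{a,b\}$ with $S_0$ a minimal $(x,y)$-separator of $G$; the paper skips this and simply uses that $S\setminus\{a,b\}\subseteq V(G)$, which is all that is needed since $\mathcal{C}_k$ is hereditary.
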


\begin{proof}
Let $a$ and $b$ be the two vertices added to $G$ to form $G'$.

Suppose first that $G' \in \mathcal{G}_k$. Clearly, $V(G)$ is a minimal $(a,b)$-separator of $G'$, and so $V(G)$ is a union of $k$ cliques of $G'$. But then $\chi(\overline{G}) \leq k$.

Suppose now that $\chi(\overline{G}) \leq k$; we must show that $G' \in \mathcal{G}_k$.
Fix two distinct, nonadjacent vertices $x,y \in V(G')$ and let $S$ be a minimal $(x,y)$-separator of $G'$.
We must show that $S$ is a union of $k$ cliques of $G'$.

Suppose first that $\{x,y\} \cap \{a,b\} \neq \emptyset$. By symmetry, we may assume that $x = a$. Since $b$ is the only nonneighbor of $a$ in $G'$, it follows that $y = b$. Since $\{a,b\} = \{x,y\}$ is complete to $V(G) = V(G') \setminus \{a,b\}$, it follows that $V(G)$ is the only $(x,y)$-separator of $G'$. So, $S = V(G)$. Since $\chi(\overline{G}) \leq k$, it follows that $S$ is a union of $k$ cliques of $G'$.

From now on, we assume that $\{x,y\} \cap \{a,b\} = \emptyset$, so that $x,y \in V(G)$. 
Since $x$ and $y$ are nonadjacent, we see that $V(G)$ is not a clique, and consequently $k\ge \chi(\overline{G})\ge 2$. 
Note that $\{a,b\}$ is complete to $\{x,y\}$, and so $a,b \in S$. Now, $\chi(\overline{G}) \leq k$, and so $S \setminus \{a,b\}$ is a union of $k$ cliques of $G$, say $C_1,\dots,C_k$. 
Using the fact that $\{a,b\}$ is complete to $V(G)$ in $G'$, and the fact that $k \geq 2$, we see that $S$ is a union of $k$ cliques of $G'$, namely $C_1 \cup \{a\},C_2 \cup \{b\},C_3,\dots,C_k$.

We have now shown that $G' \in \mathcal{G}_k$, and we are done.
\end{proof}

\begin{theorem} \label{thm-GGk-rec-\NP-hard} For every integer $k \geq 3$, it is \NP-hard to recognize graphs in $\mathcal{G}_k$.
\end{theorem}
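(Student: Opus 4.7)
The plan is to reduce from the \textsc{$k$-Coloring} problem, which is well known to be \NP-complete for every fixed $k \geq 3$. The entire technical content of the reduction has already been established in \cref{prop-kCol-red}, so the remaining work is essentially bookkeeping to package that equivalence into a polynomial-time many-one reduction.

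Concretely, fix $k \geq 3$ and let $H$ be an arbitrary graph, which is the input to \textsc{$k$-Coloring}. I would form $G := \overline{H}$ (so that $\chi(H) = \chi(\overline{G})$) and then construct $G'$ from $G$ by adding two new, nonadjacent vertices $a,b$ and making each of them adjacent to every vertex of $G$, exactly as in the hypothesis of \cref{prop-kCol-red}. Both $\overline{H}$ and the two added universal-to-$V(G)$ vertices can clearly be produced in time polynomial in $|V(H)| + |E(H)|$, so this is a polynomial-time reduction.

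By \cref{prop-kCol-red}, we have $G' \in \mathcal{G}_k$ if and only if $\chi(\overline{G}) \leq k$, which is the same as $\chi(H) \leq k$. Hence $H$ is a yes-instance of \textsc{$k$-Coloring} if and only if $G'$ is a yes-instance of the recognition problem for $\mathcal{G}_k$. Since \textsc{$k$-Coloring} is \NP-hard for $k \geq 3$, so is recognizing $\mathcal{G}_k$.

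There is essentially no genuine obstacle here; the only thing one needs to double-check is that the construction of $G'$ from $H$ can be done in polynomial time and that the reduction is correct in both directions, both of which are immediate from the statement of \cref{prop-kCol-red}. The one slightly delicate point — the fact that $V(G)$ is actually the unique minimal $(a,b)$-separator of $G'$, which forces any $k$-simpliciality of $G'$ to certify a $k$-coloring of $\overline{G}$ — has already been handled inside the proof of \cref{prop-kCol-red}, so the present proof reduces to invoking that proposition.
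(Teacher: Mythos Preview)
Your proof is correct and follows essentially the same approach as the paper: both reduce from \textsc{$k$-Coloring} by complementing the input graph and then applying \cref{prop-kCol-red} to the graph obtained by adding two nonadjacent universal vertices. The only differences are cosmetic (your $H$ is the paper's $G$, and you spell out the polynomial-time check explicitly).
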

\begin{proof}
Fix an integer $k \geq 3$, and let $G$ be any graph. We form a graph $G'$ by adding two new, nonadjacent vertices to $\overline{G}$, and making them adjacent to all vertices of $\overline{G}$. Since $\overline{\overline{G}} = G$, \cref{prop-kCol-red} guarantees that $\chi(G) \leq k$ if and only if $G' \in \mathcal{G}_k$. Since \textsc{$k$-Coloring} is \NP-complete, it follows that recognizing graphs in $\mathcal{G}_k$ is \NP-hard.
\end{proof}

\begin{theorem} \label{thm-max-clique-\NP-hard} The \textsc{Maximum Clique} problem is \NP-hard for graphs in $\mathcal{G}_3$.
\end{theorem}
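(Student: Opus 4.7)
The plan is to reduce from \textsc{Maximum Independent Set} restricted to $3$-colorable graphs, which is NP-hard: for example, \textsc{Maximum Independent Set} is NP-hard on graphs of maximum degree $3$, and by Brooks' theorem every connected subcubic graph other than $K_4$ is $3$-colorable (and the $K_4$-components of any subcubic graph can be handled trivially in a preprocessing step).

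Given an instance $H$ of \textsc{Maximum Independent Set} where $H$ is $3$-colorable, I would set $G := \overline{H}$, so that $\chi(\overline{G}) = \chi(H) \leq 3$, and then form $G'$ from $G$ by adding two new, nonadjacent vertices $a,b$, each complete to $V(G)$, exactly as in the hypothesis of \cref{prop-kCol-red}. Applying that proposition with $k = 3$ gives $G' \in \mathcal{G}_3$ for free, and the construction is plainly polynomial in the size of $H$.

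It remains to compare $\omega(G')$ with $\alpha(H)$. Since $a$ and $b$ are nonadjacent in $G'$, at most one of them lies in any clique of $G'$; and since each of them is complete to $V(G)$, a largest clique of $G'$ containing $a$ (equivalently, containing $b$) consists of $a$ together with a maximum clique of $G$, and so has size $\omega(G)+1$. A clique of $G'$ disjoint from $\{a,b\}$ is just a clique of $G$. Hence $\omega(G') = \omega(G)+1 = \omega(\overline{H})+1 = \alpha(H)+1$, so a polynomial-time algorithm for \textsc{Maximum Clique} on $\mathcal{G}_3$ would yield one for \textsc{Maximum Independent Set} on $3$-colorable graphs, a contradiction. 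The hard structural part, namely membership of $G'$ in $\mathcal{G}_3$, is already handled by \cref{prop-kCol-red}, so the only real subtlety is picking a suitable NP-hard source problem; once \textsc{Maximum Independent Set} on $3$-colorable graphs is chosen, the rest is a short calculation.
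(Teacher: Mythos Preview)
Your argument is correct, and the overall strategy---reduce from \textsc{Maximum Independent Set} on $3$-colorable graphs via complementation---is exactly the paper's approach. The paper justifies NP-hardness of the source problem with Poljak's double-subdivision trick rather than Brooks' theorem, but either works.

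One simplification worth noting: the detour through \cref{prop-kCol-red} and the auxiliary graph $G'$ is unnecessary. Since $H$ is $3$-colorable, its complement $G = \overline{H}$ already has its vertex set partitioned into three cliques, i.e., $G \in \mathcal{C}_3$. But every induced subgraph of a graph in $\mathcal{C}_3$ is again in $\mathcal{C}_3$, so in particular every minimal separator of $G$ is a union of three cliques; hence $G \in \mathcal{G}_3$ directly, with $\omega(G) = \alpha(H)$. The paper uses this shorter route. Your construction is not wrong, just a step longer than needed.
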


\begin{proof}
Note that $\mathcal{G}_3$ contains all graphs whose vertex set can be partitioned into three cliques; moreover, note that the vertex set of a graph can be partitioned into three cliques if and only if the complement of the graph is $3$-colorable. 
Thus, it suffices to show that the \textsc{Maximum Stable Set} problem is \NP-hard for 3-colorable graphs. But this readily follows from~\cite{poljak}. Indeed, as observed by Poljak~\cite{poljak}, for any graph $G$, the graph $G^*$ obtained from $G$ by subdividing each edge twice has the property that $\alpha(G^*) = \alpha(G)+|E(G)|$. But notice that for any graph $G$, the graph $G^*$ is 3-colorable. Thus, since the \textsc{Maximum Stable Set} problem is \NP-hard for general graphs, it is \NP-hard for 3-colorable graphs.
\end{proof}

\section{Diamond-free graphs in $\mathcal{G}_2$} \label{sec:diamond}

We remind the reader that the \emph{diamond} is the four-vertex graph obtained from the complete graph $K_4$ by deleting one edge (see \cref{fig:diamond}).
In this section, we prove a decomposition theorem for diamond-free graphs in $\mathcal{G}_2$, which implies a polynomial-time recognition algorithm for this class of graphs.
We begin with some definitions.

\begin{figure}
\begin{center}
\includegraphics[scale=0.6]{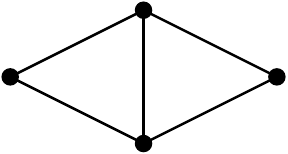}
\end{center}
\caption{The diamond.} \label{fig:diamond}
\end{figure}

\begin{figure}
\begin{center}
\includegraphics[scale=0.65]{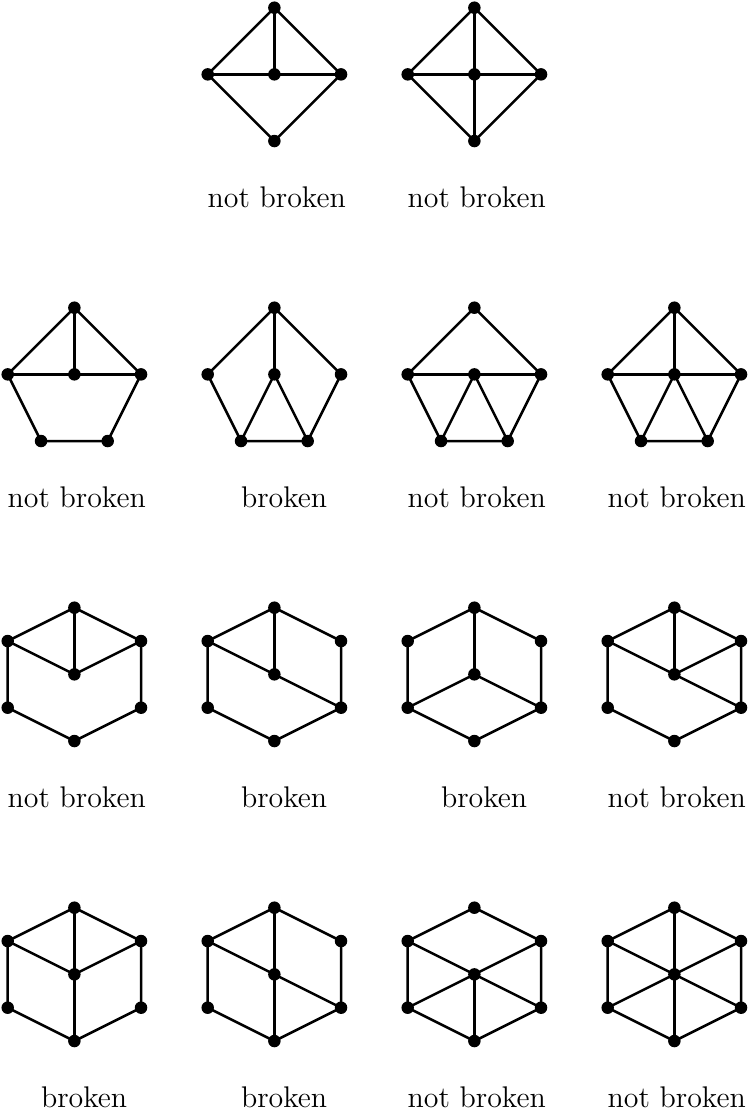}
\end{center}
\caption{Some small wheels, classified as broken or not broken.} \label{fig:BrokenWheel}
\end{figure}

\begin{figure}
\begin{center}
\includegraphics[scale=0.6]{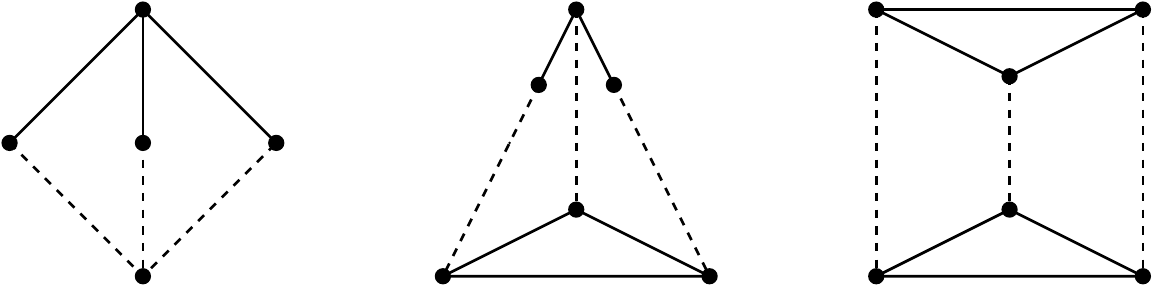}
\end{center}
\caption{Three-path-configurations: theta (left), pyramid (center), and prism (right). A full line represents an edge, and a dashed line represents a path that has at least one edge.} \label{fig:Truemper}
\end{figure}

A \emph{hole} in a graph $G$ is an induced cycle of $G$ of length at least four.
A \emph{wheel} is a graph that consists of a hole and an additional vertex that has at least three neighbors in the hole.
A \emph{broken wheel} (see \cref{fig:BrokenWheel}) is a wheel that consists of a hole $H$ and an additional vertex $v$ such that $v$ has at least three neighbors in $H$, and furthermore, the neighbors of $v$ in $V(H)$ induce a disconnected subgraph of $H$.

A \emph{prism} is any subdivision of $\overline{C_6}$ in which the two triangles remain unsubdivided; in particular, $\overline{C_6}$ is a prism. A \emph{pyramid} is any subdivision of the complete graph $K_4$ in which one triangle remains unsubdivided, and of the remaining three edges, at least two edges are subdivided at least once.
A \emph{theta} is any subdivision of the complete bipartite graph $K_{2,3}$; in particular, $K_{2,3}$ is a theta.
A \emph{3-path-configuration} (or \emph{3PC} for short) is any theta, pyramid, or prism. The three types of 3PC are represented in \cref{fig:Truemper}.

The \emph{short prism} is the graph $\overline{C_6}$, and a \emph{long prism} is any prism other than $\overline{C_6}$.
(Thus, a long prism is any prism on at least seven vertices.)

For an integer $n \geq 3$, a \emph{short $n$-prism} is a graph whose vertex set can be partitioned into two $n$-vertex cliques, say $A = \{a_1,\dots,a_n\}$ and $B = \{b_1,\dots,b_n\}$, such that for all $i,j \in \{1,\dots,n\}$, $a_i$ is adjacent to $b_j$ if and only if $i = j$. Note that $\overline{C_6}$ is a short $3$-prism, while for $n\ge 4$, no short $n$-prism is a prism.
A \emph{complete prism} is any graph that is a short $n$-prism for some integer $n \geq 3$.

We will need the following decomposition theorem for (3PC, wheel)-free graphs.

\begin{theorem}[Conforti et al.~\cite{conforti1997universally}] \label{thm-univ-sign}
If a graph $G$ is (3PC, wheel)-free, then either $G$ is a complete graph or a cycle, or $G$ admits a clique-cutset.
\end{theorem}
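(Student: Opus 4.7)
My plan is to establish the contrapositive: assuming $G$ is (3PC, wheel)-free, not a complete graph, and not a cycle, I would produce a clique-cutset.

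\textbf{Step 1: $G$ contains a hole.} Suppose instead that $G$ is chordal. Since $G$ is not complete, it has two nonadjacent vertices, and hence a minimal separator between them; by Dirac's classical characterization of chordal graphs (recalled in the introduction), every such separator is a clique, which would be a clique-cutset and contradict our assumption. So $G$ contains a hole, which I fix and denote by $H$.

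\textbf{Step 2: A component outside $H$.} Since $H$ is induced and $G$ is not a cycle, there must exist a vertex in $V(G)\setminus V(H)$. I would fix a connected component $C$ of $G\setminus V(H)$ and let $N := N_G(V(C))\cap V(H)$ be the attachments of $C$ on $H$, and then split into cases according to $|N|$.

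\begin{itemize}
\item If $|N|\le 1$, then $N$ is a (possibly empty) clique separating $V(C)$ from $V(H)\setminus N$, which is nonempty because $|V(H)|\ge 4$.
\item If $|N|=2$, write $N=\{u,v\}$. If $uv\in E(G)$, then $\{u,v\}$ is a clique-cutset. Otherwise $u,v$ split $H$ into two induced paths $P_1,P_2$ of length at least two. A shortest path through $C$ from $u$ to $v$ gives an induced path $P_3$ of length at least two whose interior is anticomplete to $V(H)\setminus\{u,v\}$ by the definition of $N$; moreover $P_1,P_2,P_3$ have no cross edges because $H$ is induced and $C$ attaches only at $u,v$. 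The union $P_1\cup P_2\cup P_3$ is then a theta, contradicting 3PC-freeness.
\item If $|N|\ge 3$, then wheel-freeness forces every vertex of $C$ to have at most two neighbors on $H$ (otherwise such a vertex together with $H$ would form a wheel). Using connectivity of $C$, I would extract short induced $C$-paths joining pairs of attachments, and combine them with arcs of $H$; depending on whether the endpoints of such a path are adjacent on $H$, on the number of attachments of the path, and on adjacencies within $N$, the resulting configuration is a theta, pyramid, prism, or wheel.
\end{itemize}

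The main obstacle will be the last case, where the sub-configurations must be ruled out systematically. My approach would be to pick a \emph{minimal} offending configuration — for instance, a pair of attachments that are closest along $H$, together with a shortest $C$-path joining them — and exploit the fact that any $C$-path shortcutting $H$ either produces a strictly shorter hole (allowing an inductive replacement of $H$) or, when it cannot be shortened, it combines with the remaining arcs of $H$ in one of the three Truemper patterns to yield a 3PC, or else creates a vertex with three neighbors on an induced cycle and hence a wheel. The bookkeeping in this case — tracking attachments, $C$-path endpoints, and adjacencies on $H$ simultaneously — is where the real work lies.
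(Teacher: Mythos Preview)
The paper does not prove this theorem; it is quoted from Conforti, Cornu\'ejols, Kapoor, and Vu\v{s}kovi\'c and used as a black box in the proof of \cref{lemma-GG2-diamond-decomp}. So there is no in-paper argument to compare your proposal against.

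On the proposal itself: there is a small wording slip in Step~1 --- you say a clique separator would ``contradict our assumption,'' but you never assumed $G$ has no clique-cutset; presumably you meant to argue by contradiction throughout, in which case you should add that hypothesis up front (and also dispose of the disconnected case, where $\emptyset$ is a clique-cutset). The cases $|N|\le 1$ and $|N|=2$ are handled correctly, and your observation that wheel-freeness forces each vertex of $C$ to have at most two neighbours on $H$ --- and theta-freeness then forces those two to be consecutive --- is exactly the right opening move for $|N|\ge 3$.

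The genuine gap is that the case $|N|\ge 3$ is only sketched, and you acknowledge as much. The difficulty is real: when you take a shortest $C$-path $P$ between two attachments $u,v$, the interior of $P$ may itself attach to other vertices of $N$, so $P$ together with an arc of $H$ need not be induced, and the ``replace $H$ by a shorter hole'' step is not automatic. The standard way through is to prove an intermediate structural lemma --- that for any component $C$ of $G\setminus V(H)$ the attachment $N$ is contained in a single edge of $H$ --- by choosing an arc of $H$ between two vertices of $N$ whose interior avoids $N$, building an induced hole $H'$ from that arc and a shortest $C$-path, and then showing that some vertex of the complementary arc becomes a wheel centre for $H'$. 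Your outline points in this direction but does not carry it out; without that lemma the argument is incomplete.
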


\begin{lemma} \label{lemma-GG2-Truemper-free-prelim}
$K_{2,3}$ is an induced minor of every theta, pyramid, long prism, or broken wheel.
\end{lemma}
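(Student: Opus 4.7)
The plan is to exhibit, in each of the four cases, an explicit induced minor model of $K_{2,3}$, namely a family of five pairwise disjoint connected subsets $X_1,X_2,Y_1,Y_2,Y_3$ of the vertex set with the properties that $X_1$ is anticomplete to $X_2$, the sets $Y_1,Y_2,Y_3$ are pairwise anticomplete, and each $Y_i$ has at least one edge to $X_1$ and at least one edge to $X_2$ (corresponding to the bipartition of $K_{2,3}$ with sides $\{X_1,X_2\}$ and $\{Y_1,Y_2,Y_3\}$). Connectedness and the required positive adjacencies will be immediate from the structural definitions, so the actual work lies in choosing the bags so that the three ``middle'' sets $Y_1,Y_2,Y_3$ are mutually anticomplete and the two ``hub'' sets $X_1,X_2$ are anticomplete.

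For a theta I would simply let $X_1,X_2$ be the two degree-$3$ vertices (which are nonadjacent since a theta is a subdivision of $K_{2,3}$) and let each $Y_i$ be the set of internal vertices of the $i$-th of the three paths between them; each $Y_i$ is a nonempty path because every such path has length at least $2$, and pairwise anticompleteness of the $Y_i$'s follows from internal vertex-disjointness of the three paths. For a pyramid with apex $d$, base triangle $abc$, and paths $P_a,P_b,P_c$, I may assume without loss of generality that $P_b$ and $P_c$ both have length at least $2$. I then set $X_1=\{d\}$ and $X_2=\{b,c\}$ (connected by the triangle edge $bc$ and anticomplete to $X_1$ exactly because $P_b,P_c$ are subdivided), $Y_1=V(P_a)\setminus\{d\}$, and take $Y_2,Y_3$ to be the internal vertices of $P_b$ and $P_c$ respectively; the triangle edges $ab$ and $ac$ provide the adjacency of $Y_1$ to $X_2$, and internal vertex-disjointness of the three paths together with the fact that triangle edges only lie inside $\{a,b,c\}$ give pairwise anticompleteness among $Y_1,Y_2,Y_3$.

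For a long prism with triangles $T_1=a_1a_2a_3$, $T_2=b_1b_2b_3$ and three internally vertex-disjoint paths $P_i$ joining $a_i$ to $b_i$ (of which, without loss of generality, $P_1$ has length at least $2$), the naive choice $X_1=\{a_1\}$, $X_2=\{b_1\}$ with each $Y_i$ running along $P_i$ fails because the triangle edges $a_2a_3$ and $b_2b_3$ make $Y_2$ and $Y_3$ adjacent. I would fix this by absorbing one edge of each triangle into the hubs, setting $X_1=\{a_1,a_2\}$ and $X_2=\{b_1,b_3\}$. I then take $Y_1$ to be the set of internal vertices of $P_1$, $Y_2=V(P_3)\setminus\{b_3\}$, and $Y_3=V(P_2)\setminus\{a_2\}$. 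The adjacency $Y_2\sim X_2$ is witnessed by the $P_3$-neighbor of $b_3$ (which is $a_3$ if $P_3$ has length $1$, and the last internal vertex of $P_3$ otherwise), and symmetrically for $Y_3\sim X_1$; the anticompleteness $X_1\not\sim X_2$ reduces to $P_1$ being subdivided together with the fact that $P_2$ connects $a_2$ to $b_2$ and $P_3$ connects $a_3$ to $b_3$, rather than to vertices of the other hub.

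Finally, for a broken wheel with hub $h$ and hole $H$, the neighbors of $h$ on $H$ form $k\ge 2$ cyclically ordered arcs $A_1,\ldots,A_k$, separated by nonempty arcs $B_1,\ldots,B_k$ of non-neighbors of $h$. I set $X_1=A_1$, $X_2=A_2\cup B_2\cup A_3\cup\cdots\cup A_k$ (a single subarc of $H$ containing every neighbor arc other than $A_1$), $Y_1=\{h\}$, $Y_2=B_1$, and $Y_3=B_k$; then $B_1$ and $B_k$ are precisely the two non-neighbor arcs flanking $A_1$ on the cycle, so they both have cycle-endpoints in $A_1\subseteq X_1$ and in $X_2$, and are pairwise anticomplete and anticomplete to $h$. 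The main obstacle across the four cases is the long prism, where the two triangles force a careful splitting of the triangle-edges between the two hubs in order to keep $Y_1,Y_2,Y_3$ on essentially disjoint subpaths; once that trick is found, every remaining condition reduces to a routine check against the definitions of theta, pyramid, prism, and wheel.
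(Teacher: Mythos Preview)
Your proof is correct and follows essentially the same approach as the paper: in each of the four cases you exhibit the same induced minor model of $K_{2,3}$ that the paper obtains by its explicit edge contractions (e.g., your hubs $\{a_1,a_2\}$ and $\{b_1,b_3\}$ in the long prism correspond exactly to the paper's contraction of the triangle edges $a_1a_2$ and $b_1b_3$). The only difference is presentational---you phrase things via bags rather than contraction sequences---and your treatment of the broken wheel in terms of neighbor-arcs $A_i$ and gap-arcs $B_i$ is slightly cleaner than the paper's coordinate-based argument, but the underlying construction is the same.
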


\begin{proof}
First, we show that $K_{2,3}$ is an induced minor of every theta.
Let $H$ be a theta.
Let $a$ and $b$ be distinct, nonadjacent vertices of $H$, and let $P^1,P^2,P^3$ be distinct induced paths in $H$, each between $a$ and $b$, such that any two of $P^1,P^2,P^3$ have exactly two vertices (namely $a$ and $b$) in common.
Contracting in $H$ all but two edges of each path $P^i$ results in a graph isomorphic to $K_{2,3}$.

Next, we show that $K_{2,3}$ is an induced minor of every pyramid.
Let $H$ be a pyramid.
Let $a$ be a vertex of $H$, let $B = \{b_1,b_2,b_3\}$ be a 3-vertex clique in $H \setminus a$, and let $P^1$, $P^2$, and $P^3$ be induced paths in $H$ such that
\begin{itemize}
\item for each $i \in \{1,2,3\}$, the endpoints of $P^i$ are $a$ and $b_i$;
\item any two of the paths $P^1,P^2,P^3$ have exactly one vertex (namely $a$) in common.
\end{itemize}
Since $H$ is a pyramid, we know that at least two of $P^1,P^2,P^3$ have more than one edge; by symmetry, we may assume that $P^1$ and $P^2$ each have at least two edges.
Contracting in $H$ all but two edges of each of the paths $P^1$ and $P^2$, all but one edge of the path $P^3$, and the edge $b_1b_2$ results in a graph isomorphic to $K_{2,3}$.

Next, we show that $K_{2,3}$ is an induced minor of every long prism.
Let $H$ be a long prism.
Let $A = \{a_1,a_2,a_3\}$ and $B = \{b_1,b_2,b_3\}$ be disjoint 3-vertex cliques in $H$, and let $P^1$, $P^2$, and $P^3$ be induced paths in $H$ such that
\begin{itemize}
\item for each $i \in \{1,2,3\}$, the endpoints of $P^i$ are $a_i$ and $b_i$;
\item no two of the paths $P^1,P^2,P^3$ have any vertices in common.
\end{itemize}
Since $H$ is a long prism, we know that at least one of $P^1,P^2,P^3$ has more than one edge; by symmetry, we may assume that $P^1$ has more than one edge.
Contracting in $H$ all but two edges of the path $P^1$, all but one edges of each of the paths $P^2$ ad $P^3$, and the edges $a_1a_2$ and $b_1b_3$ results in a graph isomorphic to $K_{2,3}$.

Finally, we show that $K_{2,3}$ is an induced minor of every broken wheel.
Let $W$ be a broken wheel, consisting of a hole $H = h_0,h_1,\dots,h_{k-1},h_0$ (with $k \geq 4$, and indices in $\mathbb{Z}_k$) and an additional vertex $v$ that has at least three neighbors in $H$, and such that the neighbors of $v$ induce a disconnected subgraph of $H$.
By symmetry, we may assume that $v$ is nonadjacent to $h_0$ and adjacent to $h_1$.
Let the path $h_1,\dots,h_i$ be one component of $H[N_W(v)]$.
Contracting in $W$ all edges of the hole $H$ except for the four edges $h_0h_1, h_0h_{k-1}, h_{i}h_{i+1}, h_{i+1}h_{i+2}$ results in a graph isomorphic to $K_{2,3}.$
\end{proof}

\begin{corollary} \label{lemma-GG2-Truemper-free} Every graph in $\mathcal{G}_2$ is (theta, pyramid, long prism, broken wheel)-free.
\end{corollary}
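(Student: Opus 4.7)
The proof will be a one-line composition of two facts already in hand. By Corollary~\ref{cor-K2k1} applied with $k=2$, every graph in $\mathcal{G}_2$ is $K_{2,3}$-induced-minor-free. By Lemma~\ref{lemma-GG2-Truemper-free-prelim}, $K_{2,3}$ is an induced minor of every theta, pyramid, long prism, and broken wheel. The plan is to combine these: if $G\in\mathcal{G}_2$ contained any such graph $H$ as an induced subgraph, then $K_{2,3}$ would be an induced minor of $H$, hence of $G$, contradicting the first fact.

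The only small thing to spell out is that being an induced minor is transitive with respect to taking induced subgraphs, i.e., if $H$ is an induced subgraph of $G$ and $K$ is an induced minor of $H$, then $K$ is an induced minor of $G$. This is immediate from the definition: an induced minor model $\{X_v\}_{v\in V(K)}$ of $K$ in $H$ is also an induced minor model of $K$ in $G$, since the subgraphs induced by the $X_v$ and the edges between them are the same in $G$ as in $H$ (because $H$ is induced in $G$).

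There is no real obstacle here; the lemma is a direct corollary. I would simply state: suppose for contradiction that some $G\in\mathcal{G}_2$ contains a theta, pyramid, long prism, or broken wheel $H$ as an induced subgraph. By Lemma~\ref{lemma-GG2-Truemper-free-prelim}, $K_{2,3}$ is an induced minor of $H$, and therefore of $G$. But Corollary~\ref{cor-K2k1} (with $k=2$) asserts that every graph in $\mathcal{G}_2$ is $K_{2,3}$-induced-minor-free, a contradiction.
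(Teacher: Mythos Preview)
Your proof is correct and follows essentially the same approach as the paper: both combine Corollary~\ref{cor-K2k1} (for $k=2$) with Lemma~\ref{lemma-GG2-Truemper-free-prelim}. The only cosmetic difference is that the paper phrases the reduction via the fact that $\mathcal{G}_2$ is hereditary (Corollary~\ref{prop-GC-hereditary}), whereas you phrase it via transitivity of the induced minor relation through induced subgraphs; these are equivalent one-line observations.
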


\begin{proof}
Since $\mathcal{G}_2$ is hereditary (by \cref{prop-GC-hereditary}), it suffices to show that $\mathcal{G}_2$ contains no theta, no pyramid, no long prism, and no broken wheel.
By \cref{cor-K2k1}, $\mathcal{G}_2$ is a subclass of the class of $K_{2,3}$-induced-minor-free graphs.
Thus, it suffices to show that the class of $K_{2,3}$-induced-minor-free graphs contains no theta, no pyramid, no long prism, and no broken wheel, or equivalently, that $K_{2,3}$ is an induced minor of every theta, pyramid, long prism, or broken wheel.
This is exactly the statement of \Cref{lemma-GG2-Truemper-free-prelim}.
\end{proof}

\begin{lemma} \label{lemma-GG2-diamond-Truemper-free}
Let $G$ be a diamond-free graph that belongs to $\mathcal{G}_2$.
Then $G$ is (theta, pyramid, long prism, wheel)-free.
\end{lemma}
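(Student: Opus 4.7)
The plan is to combine \cref{lemma-GG2-Truemper-free} with a short direct argument to handle the remaining case. By \cref{lemma-GG2-Truemper-free}, any graph in $\mathcal{G}_2$ is already (theta, pyramid, long prism, broken wheel)-free; so for a diamond-free $G \in \mathcal{G}_2$, it suffices to rule out the non-broken wheels, that is, wheels whose center has a set of neighbors in the rim hole that induces a \emph{connected} subgraph of the hole.

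First, I would assume for contradiction that $G$ contains such a wheel: a hole $H = h_0,h_1,\ldots,h_{k-1},h_0$ (with $k \geq 4$ and indices taken in $\mathbb{Z}_k$) together with a vertex $v \notin V(H)$ that has at least three neighbors in $V(H)$, and such that $H[N_G(v) \cap V(H)]$ is connected. Since any connected induced subgraph of a hole is a subpath, and since $v$ has at least three neighbors on $H$, after relabeling we may assume $v$ is adjacent to $h_0,h_1,h_2$ (and possibly to further consecutive vertices along $H$).

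Next, I would inspect the four-vertex set $\{v,h_0,h_1,h_2\}$. The edges of $G[\{v,h_0,h_1,h_2\}]$ include $vh_0$, $vh_1$, $vh_2$ (by choice of $v$), and $h_0h_1$, $h_1h_2$ (because these are consecutive vertices of the hole $H$). On the other hand, $h_0h_2 \notin E(G)$, since $H$ is an induced cycle of length at least four, so $h_0$ and $h_2$ have no chord between them in $G$. This gives exactly five edges on four vertices with $h_0h_2$ as the unique non-edge, so $G[\{v,h_0,h_1,h_2\}]$ is isomorphic to the diamond. This contradicts the assumption that $G$ is diamond-free, completing the proof.

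There is essentially no obstacle here: the hard combinatorial work is already packaged inside \cref{lemma-GG2-Truemper-free}, and the extra wheels that become forbidden are precisely the non-broken ones, whose center together with three consecutive rim vertices visibly exhibits a diamond. The only care needed is to verify that the ``at least three neighbors'' assumption together with connectedness on the hole really forces three \emph{consecutive} neighbors, which is immediate since the connected induced subgraphs of a cycle are exactly its subpaths.
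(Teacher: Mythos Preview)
Your proof is correct and follows the same approach as the paper, which simply observes that every wheel either contains an induced diamond or is a broken wheel, and then invokes \cref{lemma-GG2-Truemper-free}; you have merely spelled out the diamond explicitly. (One tiny quibble: the connected induced subgraph $H[N_G(v)\cap V(H)]$ could be all of $H$ rather than a proper subpath, but in that case three consecutive neighbors of $v$ on $H$ obviously still exist, so your diamond argument goes through unchanged.)
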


\begin{proof}
Clearly, every wheel either contains an induced diamond or is a broken wheel.
The result now follows from \cref{lemma-GG2-Truemper-free}.
\end{proof}

\begin{lemma} \label{lemma-C6-complement} Let $G$ be a (diamond, theta, pyramid, long prism, wheel)-free graph that contains an induced $\overline{C_6}$. Then either $G$ is a complete prism, or $G$ admits a clique-cutset.
\end{lemma}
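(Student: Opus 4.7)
The plan is to pick a maximum-size complete prism $H \subseteq G$ (one exists because $G$ contains the short $3$-prism $\overline{C_6}$), write its two cliques as $A = \{a_1, \ldots, a_n\}$ and $B = \{b_1, \ldots, b_n\}$ matched via the edges $a_ib_i$ (so $n \geq 3$), and let $V' = V(G) \setminus V(H)$. If $V' = \emptyset$ then $G$ itself is a complete prism and we are done, and if $G$ is disconnected then $\emptyset$ is a clique-cutset, so I would assume $G$ is connected and $V' \neq \emptyset$, and try to exhibit a clique-cutset.

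Next, I would determine, using the Truemper-freeness hypotheses, the possible attachments of each $v \in V'$ to $A \cup B$. Since $A, B$ are cliques of size at least three and $G$ is diamond-free, $N_G(v) \cap A$ must be empty, a single vertex, or all of $A$ (two neighbors and one nonneighbor in $A$ immediately give a diamond), and symmetrically for $B$; a further diamond check rules out ``complete to $A$ together with some neighbor in $B$'' (the quadruple $\{v, a_i, a_j, b_j\}$ with $i \neq j$ would be a diamond) and the symmetric case, together with ``complete to both''. This leaves six attachment types: $(0,0), (1,0), (0,1), (1,1), (A,0), (0,B)$. Wheel-freeness should further eliminate type $(1,1)$ with non-matching indices: if $v \sim a_i$, $v \sim b_j$ with $i \neq j$ and $k$ is any third index, then $v, a_i, a_k, b_k, b_j, v$ is an induced $5$-hole on which $a_j$ has exactly the three neighbors $a_i, a_k, b_j$; these three neighbors induce in the hole an edge ($a_ia_k$) plus an isolated vertex ($b_j$), so $a_j$ and the hole form a broken wheel. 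Finally, maximality of $H$ forbids any edge between a type-$(A,0)$ vertex $v$ and a type-$(0,B)$ vertex $w$ of $V'$: with such an edge, $A \cup \{v\}$ and $B \cup \{w\}$ with matching $\{a_ib_i\} \cup \{vw\}$ would form a complete $(n+1)$-prism.

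At this point I would exhibit the clique-cutset. Since $G$ is connected, some $v \in V'$ has a neighbor in $V(H)$, and the candidate cutset should depend on the type of $v$: $A$ if $v$ has type $(A,0)$, $B$ if $(0,B)$, $\{a_i, b_i\}$ if $v$ has type $(1,1)$ matched at index $i$, and the unique singleton $\{a_i\}$ or $\{b_i\}$ if $(1,0)$ or $(0,1)$. The main obstacle will be verifying that the chosen clique genuinely separates $G$: given a hypothetical shortest ``bypass'' path $v = p_0, p_1, \ldots, p_\ell$ from $v$ to the far side of $H$ in $G$ minus the candidate cutset, the plan is to close the path up through the matched partner in $H$ of its $H$-endpoint and, using the remaining $n - 1 \geq 2$ matched pairs of the prism as auxiliary triangle apices, extract a theta, pyramid, long prism, or broken wheel in $G$. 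Producing the appropriate forbidden Truemper configuration from each possible bypass path is the technical core of the lemma.
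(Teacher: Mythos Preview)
Your overall architecture matches the paper's: take a maximal short $n$-prism $H$ with sides $A,B$, classify attachments of outside vertices, and then exhibit a clique-cutset. Your attachment analysis (six types, with the mismatched $(1,1)$ case ruled out by a wheel) is correct and essentially the paper's Claim~1.

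However, there is a genuine gap in your choice of candidate cutset for a vertex $v$ of type $(1,0)$ or $(0,1)$: the singleton $\{a_i\}$ (or $\{b_i\}$) need \emph{not} separate. Take $H=\overline{C_6}$ with $A=\{a_1,a_2,a_3\}$, $B=\{b_1,b_2,b_3\}$, and add two new vertices $v,w$ with $v\sim a_1$, $w\sim a_2$, $v\sim w$ (and no other new edges). One checks directly that this graph is (diamond, theta, pyramid, long prism, wheel)-free. Picking $v$ gives candidate cutset $\{a_1\}$, but $G\setminus\{a_1\}$ is connected via $v\text{--}w\text{--}a_2\text{--}a_3\text{--}b_3\text{--}b_2\text{--}b_1$, so $\{a_1\}$ is not a cutset. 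Your planned contradiction (``close up a shortest bypass path through the matched partner and extract a forbidden Truemper configuration'') does not materialise here: any cycle you try to build through $a_1$ and $B$ picks up the chord $a_1a_2$ or $a_2a_3$.

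The paper handles the single-neighbor case quite differently. After disposing of types $(A,0)$, $(0,B)$, and matched $(1,1)$ (each of which, when present, forces $A$, $B$, or $\{a_i,b_i\}$ respectively to be a clique-cutset---this is where the real bypass-path analysis lives), it looks at the sets $N_A$, $N_B$ of outside vertices with a neighbor in $A$, resp.\ $B$. If one of them is empty, then $A$ or $B$ is a clique-cutset (separating the other side from $V'$). If both are nonempty, it takes a shortest path in $G\setminus V(H)$ from $N_A$ to $N_B$, uses theta-freeness to force its two endpoints to attach at a matched pair $\{a_i,b_i\}$, and then shows $\{a_i,b_i\}$ is a clique-cutset. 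In the example above, $N_B=\emptyset$ and the cutset is $A$ (or $\{a_1,a_2\}$), not a singleton. So your plan is salvageable, but the $(1,0)/(0,1)$ branch needs to be rewritten along these lines.

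A smaller remark: your maximality observation that no $(A,0)$ vertex is adjacent to a $(0,B)$ vertex is correct, but it only rules out length-one bypasses; to conclude that $A$ is a cutset in the $(A,0)$ case you still need the full bypass-path analysis you allude to, which in the paper is the content of Claim~2.
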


\begin{proof}
Recall that $\overline{C_6}$ is a short 3-prism; fix a maximum integer $n \geq 3$ such that $G$ contains an induced short $n$-prism $H$. Set $V(H) = A \cup B$, where $A = \{a_1,\dots,a_n\}$ and $B = \{b_1,\dots,b_n\}$ are disjoint, $n$-vertex cliques, such that for all $i,j \in \{1,\dots,n\}$, $a_i$ is adjacent to $b_j$ in $H$ if and only if $i = j$. We may assume that $V(H) \subsetneqq V(G)$, for otherwise, $G$ is a complete prism, and we are done. We may further assume that $G$ is connected, for otherwise, $\emptyset$ is a clique-cutset of $G$, and again we are done.

\medskip
\noindent\textbf{Claim~1.} For all $v \in V(G) \setminus V(H)$, either $N_G(v) \cap V(H) = A$, or $N_G(v) \cap V(H) = B$, or there exists some $i \in \{1,\dots,n\}$ such that $N_G(v) \cap V(H) \subseteq \{a_i,b_i\}$.

\medskip
\noindent\emph{Proof of Claim~1.}
Fix $v \in V(G) \setminus V(H)$. We may assume that $|N_G(v) \cap V(H)| \geq 2$, for otherwise, the result is immediate. Next, if there exist distinct $i,j \in \{1,\dots,n\}$ such that $v$ is complete to $\{a_i,b_j\}$, then $G[v,a_i,a_j,b_i,b_j]$ is either a theta or a wheel, contrary to the fact that $G$ is (theta, wheel)-free. So, if $v$ has a neighbor both in $A$ and in $B$, then there exists some $i \in \{1,\dots,n\}$ such that $N_G(v) \cap V(H) = \{a_i,b_i\}$, and we are done. From now on, we assume that either $N_G(v) \cap V(H) \subseteq A$ or $N_G(v) \cap V(H) \subseteq B$; by symmetry, we may assume that $N_G(v) \cap V(H) \subseteq A$, and we deduce that $|N_G(v) \cap A| \geq 2$. Then $v$ is complete to $A$, for otherwise, we fix pairwise distinct $a_i,a_j,a_k \in A$ such that $v$ is adjacent to $a_i,a_j$ and nonadjacent to $a_k$, and we observe that $G[v,a_i,a_j,a_k]$ is a diamond, contrary to the fact that $G$ is diamond-free. It now follows that $N_G(v) \cap V(H) = A$, and we are done. This proves Claim~1. \hfill~$\blacklozenge$

\medskip
\noindent\textbf{Claim~2.} If there exists some $v \in V(G) \setminus V(H)$ such that $N_G(v) \cap V(H) = A$ (resp.\ such that $N_G(v) \cap V(H) = B$), then $A$ (resp.\ $B$) is a clique-cutset of $G$.

\medskip
\noindent\emph{Proof of Claim~2.}
By symmetry, we may assume that some $v \in V(G) \setminus V(H)$ satisfies $N_G(v) \cap V(H) = A$; we must show that $A$ is a clique-cutset of $G$. By construction, $A$ is a clique of $G$, and so it suffices to show that $A$ is a cutset of $G$ separating $v$ from $B$. Suppose otherwise. Then there exists an induced path $P$ in $G \setminus V(H)$ between $v$ and some vertex that has a neighbor in $B$. Let $Q = q_0,\dots,q_t$ (with $t \geq 0$) be minimum-length subpath of $P$ such that $q_0$ is complete to $A$ and $q_t$ has a neighbor in $B$; by Claim~1, $N_G(q_0)\cap V(H)=A$ and $q_0 \neq q_t$, i.e., $t \geq 1$. By symmetry, we may assume that $q_t$ is adjacent to $b_1$. By Claim~1, we have that either $N_G(q_t) \cap V(H) = B$, or $N_G(q_t) \cap V(H) = \{a_1,b_1\}$, or $N_G(q_t) \cap V(H) = \{b_1\}$.

Assume first that $N_G(q_t) \cap V(H) = B$. If $t = 1$, then $G[A \cup B \cup \{q_0,q_1\}]$ is a short $(n+1)$-prism, contrary to the maximality of $n$. So, $t \geq 2$. By the minimality of $Q$, all internal vertices of $Q$ are anticomplete to $B$. If the internal vertices of $Q$ are also anticomplete to $A$, then $G[\{a_1,a_2,b_1,b_2\} \cup V(Q)]$ is a long prism, contrary to the fact that $G$ is long-prism-free.
Hence, some internal vertex of $Q$ has a neighbor in $A$; let $i \in \{1,\dots,t-1\}$ be maximum with the property that $q_i$ has a neighbor in $A$. By the minimality of $Q$, and by Claim~1, we know that $q_i$ has a unique neighbor in $A$; fix $j \in \{1,\dots,n\}$ such that $a_j$ is the unique neighbor of $q_i$ in $A$, and fix any $k \in \{1,\dots,n\} \setminus \{j\}$. But now $G[a_j,a_k,b_j,b_k,q_i,q_{i+1},\dots,q_t]$ is a pyramid, contrary to the fact that $G$ is pyramid-free.

Assume next that $N_G(q_t) \cap V(H) = \{a_1,b_1\}$.
Let $q_i$ be the vertex of $Q$ with highest index such that $q_i$ is adjacent to $a_2$.
Then $q_i, \ldots, q_t,b_1,b_2,a_2,q_i$ is a hole, and $a_1$ has at least three neighbors (namely, $a_2,q_t,b_1$) in it, contrary to the fact that $G$ is wheel-free.

Assume finally that $N_G(q_t) \cap V(H) = \{b_1\}$.
Let $q_i$ (resp.\ $q_j$) be the vertex of $Q$ with highest index such that $q_i$ (resp.\ $q_j$) is adjacent to $a_2$ (resp.\ $a_1$).
If $j\geq i$ then $q_i, \ldots ,q_t,b_1,b_2,a_2,q_i$ is a hole and $a_1$ has at least three neighbors in it (namely, $a_2,q_j,b_1$), contrary to the fact that $G$ is wheel-free. So,  $i > j$. Then $q_j, \ldots ,q_t,b_1,a_1,q_j$ is a hole and $a_2$ has two nonadjacent neighbors in it (namely, $a_1,q_i$), and hence $G[q_j,\dots,,q_t,b_1,a_1,a_2]$ is a theta or a wheel, contrary to the fact that $G$ is (theta, wheel)-free. This proves Claim~2.~\hfill $\blacklozenge$

\medskip

In view of Claims~1 and~2, we assume from now on that for all $v \in V(G) \setminus V(H)$, there exists some $i \in \{1,\dots,n\}$ such that $N_G(v) \cap V(H) \subseteq \{a_i,b_i\}$.

\medskip
\noindent\textbf{Claim~3.} For all $i \in \{1,\dots,n\}$, if some vertex in $V(G) \setminus V(H)$ is complete to $\{a_i,b_i\}$, then $\{a_i,b_i\}$ is a clique-cutset of $G$.

\medskip
\noindent\emph{Proof of Claim~3.} 
By symmetry, we may assume that some vertex of $V(G) \setminus V(H)$ is complete to $\{a_1,b_1\}$; we must show that $\{a_1,b_1\}$ is a clique-cutset of $G$. 
Let $p_0$ be a vertex of $V(G) \setminus V(H)$ that is complete to $\{a_1,b_1\}$.
Since $a_1$ is adjacent to $b_1$, it suffices to show that $G \setminus \{a_1,b_1\}$ is disconnected. Suppose otherwise. 
Then, there exists an induced path in $G \setminus \{a_1,b_1\}$ from the vertex $p_0$ to a vertex in $V(H) \setminus \{a_1,b_1\}$. 
Consequently,
there exists an induced path $P = p_0,\dots,p_s$ (with $s \geq 0$) in $G \setminus (A \cup B)$ such that $p_s$ has a neighbor in $V(H) \setminus \{a_1,b_1\}$; we may assume that the path $P$ was chosen so that its length is minimum. 
By Claim~1, we have that $N_G(p_0) \cap V(H) = \{a_1,b_1\}$, and so $s \geq 1$. 
Furthermore, by the minimality of $P$, $\{p_0,\dots,p_{s-1}\}$ is anticomplete to $V(H) \setminus \{a_1,b_1\}$. 
By symmetry, we may assume that $a_2 \in N_G(p_s) \cap V(H) \subseteq \{a_2,b_2\}$. Let $i$ be the largest index in $\{0,\dots,s\}$ such that $p_i$ is adjacent to $b_1$ (such an $i$ exists because $p_0$ is adjacent to $b_1$). Now $p_i,\dots,p_s,a_2,a_3,b_3,b_1,p_i$ is a hole, and $a_1$ has at least three neighbors (namely, $a_2,a_3,b_1$) in it, contrary to the fact that $G$ is wheel-free. This proves Claim~3. \hfill $\blacklozenge$

\medskip
In view of Claim~3, we may now assume that no vertex in $V(G) \setminus V(H)$ has more than one neighbor in $V(H)$. Let $N_A$ be the set of all vertices in $V(G) \setminus V(H)$ that have a neighbor in $A$, and let $N_B$ be the set of all vertices in $V(G) \setminus V(H)$ that have a neighbor in $B$. Then $N_A \cap N_B = \emptyset$. Since $V(H) \subsetneqq V(G)$ and $G$ is connected, we have that $N_A \cup N_B \neq \emptyset$. If $N_A = \emptyset$, then $B$ is a clique-cutset of $G$, and if $N_B = \emptyset$, then $A$ is a clique-cutset of $G$.
So, we may assume that $N_A$ and $N_B$ are both nonempty.
Furthermore, we may assume that there is an induced path in $G \setminus V(H)$ between $N_A$ and $N_B$, for otherwise, both $A$ and $B$ are clique-cutsets of $G$, and we are done. Let $P = p_0,\dots,p_s$ (with $s \geq 0$) be a minimum-length path in $G \setminus V(H)$ such that $p_0 \in N_A$ and $p_s \in N_B$; since $N_A \cap N_B = \emptyset$, we see that $s \geq 1$. Furthermore, the minimality of $P$ implies that the interior of $P$ is anticomplete to $V(H)$.

Since no vertex of $V(G) \setminus V(H)$ has more than one neighbor in $V(H)$, we may assume by symmetry that $N_G(p_0) \cap V(H) = \{a_1\}$, and that either $N_G(p_s) \cap V(H) = \{b_1\}$ or $N_G(p_s) \cap V(H) = \{b_2\}$. But if $N_G(p_s) \cap V(H) = \{b_2\}$, then $G[a_1,a_2,b_1,b_2,p_0,\dots,p_s]$ is a theta, contrary to the fact that $G$ is theta-free. So, $N_G(p_s) \cap V(H) = \{b_1\}$. We now have that $V(P)$ is anticomplete to $V(H) \setminus \{a_1,b_1\}$.

Our goal is to show that $\{a_1,b_1\}$ is a clique-cutset of $G$. Suppose otherwise; then $G \setminus \{a_1,b_1\}$ is connected.
Then, there exists an induced path in $G \setminus \{a_1,b_1\}$ from a vertex in $P$ to a vertex in $V(H) \setminus \{a_1,b_1\}$. 
Since $V(P)$ is anticomplete to $V(H) \setminus \{a_1,b_1\}$, any such path has length at least two.
Deleting the endpoints of any such path, we obtain an induced path $Q = q_0,\dots,q_t$ (with $t \geq 0$) in $G \setminus (V(H) \cup V(P))$ such that $q_0$ has a neighbor in $V(P)$, and $q_t$ has a neighbor in $V(H) \setminus \{a_1,b_1\}$; we may assume that $Q$ is a minimum-length path with this property, so that $q_0$ is the only vertex of $Q$ with a neighbor in $V(P)$, and $q_t$ is the only vertex of $Q$ with a neighbor in $V(H) \setminus \{a_1,b_1\}$. By symmetry, we may further assume that $q_t$ is adjacent to $a_2$; then $N_G(q_t) \cap V(H) = \{a_2\}$. Let $i$ be the largest index in $\{0,\dots,s\}$  such that $q_0$ is adjacent to $p_i$. Then $p_i,\dots,p_s,b_1,b_3,a_3,a_2,q_t,\dots,q_0,p_i$ is a hole in $G$, and $b_2$ has at least three neighbors (namely, $a_2,b_1,b_3$) in it, contrary to the fact that $G$ is wheel-free.
This completes the proof.
\end{proof}

\begin{lemma} \label{lemma-GG2-diamond-decomp} Let $G$ be a (diamond, theta, pyramid, long prism, wheel)-free graph. Then either $G$ is a complete prism, a cycle, or a complete graph, or $G$ admits a clique-cutset.
\end{lemma}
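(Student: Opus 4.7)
The plan is to split into two cases based on whether $G$ contains an induced $\overline{C_6}$ (the short prism), and apply one of two existing results in each case.

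First, suppose $G$ contains an induced $\overline{C_6}$. Then $G$ satisfies exactly the hypothesis of \cref{lemma-C6-complement}, which immediately yields that either $G$ is a complete prism or $G$ admits a clique-cutset. This case requires no additional work.

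Second, suppose $G$ is $\overline{C_6}$-free. Recall that a prism is, by definition, either the short prism $\overline{C_6}$ or a long prism. Hence, being $\overline{C_6}$-free and long-prism-free together imply that $G$ is prism-free. Combined with the assumption that $G$ is theta-free and pyramid-free, this means $G$ is 3PC-free (since a 3PC is, by definition, a theta, pyramid, or prism). Since $G$ is also wheel-free by assumption, $G$ is (3PC, wheel)-free, and so \cref{thm-univ-sign} (Conforti et al.) applies: $G$ is either a complete graph, a cycle, or admits a clique-cutset.

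Combining the two cases gives the conclusion. There is essentially no obstacle: the lemma is a direct packaging of \cref{lemma-C6-complement} together with \cref{thm-univ-sign}, where the only non-trivial observation is that long-prism-free plus $\overline{C_6}$-free is precisely prism-free, so the hypotheses of the Conforti et al.\ theorem are met in the $\overline{C_6}$-free case.
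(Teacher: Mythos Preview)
Your proof is correct and matches the paper's own argument essentially verbatim: split on whether $G$ contains an induced $\overline{C_6}$, apply \cref{lemma-C6-complement} in the first case, and observe that $G$ is (3PC, wheel)-free and apply \cref{thm-univ-sign} in the second. Your explicit remark that $\overline{C_6}$-free together with long-prism-free is equivalent to prism-free is the only detail the paper leaves implicit.
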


\begin{proof}
If $G$ contains an induced $\overline{C_6}$, then the result follows from \cref{lemma-C6-complement}. Otherwise, we have that $G$ is (3PC, wheel)-free, and the result follows from \cref{thm-univ-sign}.
\end{proof}

\begin{theorem} \label{thm-GG2-diamond-decomp}
Let $G$ be a diamond-free graph that belongs to $\mathcal{G}_2$. Then either $G$ is a complete prism, a cycle, or a complete graph, or $G$ admits a clique-cutset.
\end{theorem}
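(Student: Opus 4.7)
The proof is a one-step combination of the two immediately preceding lemmas, and essentially no new work is required. My plan is as follows.

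First, I would invoke \cref{lemma-GG2-diamond-Truemper-free}, which asserts that every diamond-free graph in $\mathcal{G}_2$ is also (theta, pyramid, long prism, wheel)-free. Applying this to our $G$, I conclude that $G$ is a (diamond, theta, pyramid, long prism, wheel)-free graph. Then I would hand the graph straight to \cref{lemma-GG2-diamond-decomp}, which tells us that any such graph is either a complete prism, a cycle, or a complete graph, or else admits a clique-cutset. This is exactly the desired conclusion.

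There is no real obstacle here because all the structural work has been front-loaded into the two preceding lemmas: \cref{lemma-GG2-Truemper-free-prelim} (showing $K_{2,3}$ is an induced minor of each of the forbidden configurations, which combined with \cref{cor-K2k1} forces them out of $\mathcal{G}_2$), and \cref{lemma-C6-complement} together with the Conforti--Cornu\'ejols--Kapoor--Vu\v{s}kovi\'c theorem \cref{thm-univ-sign} (handling the two cases depending on whether $G$ contains an induced $\overline{C_6}$). So the entire proof of the theorem will be a two-sentence deduction of the form: \emph{``By \cref{lemma-GG2-diamond-Truemper-free}, $G$ is (diamond, theta, pyramid, long prism, wheel)-free. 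The conclusion now follows from \cref{lemma-GG2-diamond-decomp}.''}
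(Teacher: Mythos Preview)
Your proposal is correct and matches the paper's own proof essentially verbatim: the paper simply writes that the theorem ``follows immediately from \cref{lemma-GG2-diamond-Truemper-free,lemma-GG2-diamond-decomp}.'' Your two-sentence deduction is exactly the intended argument.
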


\begin{proof}
This follows immediately from \cref{lemma-GG2-diamond-Truemper-free,lemma-GG2-diamond-decomp}.
\end{proof}

\subsection{Algorithmic considerations}

Clearly, complete prisms, cycles, and complete graphs are diamond-free and belong to $\mathcal{G}_2$, and furthermore, they can all be recognized in polynomial time.
So, using \cref{cor-G2-clique-gluing} and \cref{thm-GG2-diamond-decomp}, we show that diamond-free graphs in $\mathcal{G}_2$ can be recognized in polynomial time.
In order to derive the result, we decompose a graph by means of clique-cutsets.
This common algorithmic tool, first proposed by Tarjan  (\cite{tarjan1985decomposition}), applies to any $n$-vertex graph $G$ and produces a family $\mathcal{H}$ of $\mathcal{O}(n)$ induced subgraphs of $G$ that do not have any clique-cutsets and such that $G$ can be obtained by an iterative application of gluing graphs from $\mathcal{H}$ along cliques.
The original algorithm proposed by Tarjan runs in time $\mathcal{O}(n(n+m))$, where $m$ denotes the number of edges of $G$. 
A more efficient approach for decomposing a graph along clique-cutsets was suggested by Coudert and Ducoffe \cite{MR3780116}.
They improved the time complexity to $\mathcal{O}(n^{\omega}\log n),$ where $\omega<2.3728596$ is the matrix multiplication exponent (see~\cite{MR4262465}).

\begin{proposition}\label{prop:recognition-diamond-free}
There exists an algorithm running in time $\mathcal{O}(n^{\omega}\log n)$ that correctly determines if an input $n$-vertex graph $G$ is a diamond-free graph in $\mathcal{G}_2$.
\end{proposition}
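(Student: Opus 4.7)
The plan is to combine \cref{thm-GG2-diamond-decomp} with an efficient clique-cutset decomposition procedure and the closure of $\mathcal{G}_2$ under clique-gluing established in \cref{cor-G2-clique-gluing}. First, I would apply the Coudert--Ducoffe decomposition algorithm to the input graph $G$, obtaining in $\mathcal{O}(n^{\omega} \log n)$ time a family $\mathcal{H}$ of $\mathcal{O}(n)$ induced subgraphs of $G$ (the atoms) none of which admit a clique-cutset and such that $G$ can be reconstructed by iteratively gluing atoms along cliques.

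Second, I would inspect each atom $H\in\mathcal{H}$ and verify that $H$ belongs to the short list provided by \cref{thm-GG2-diamond-decomp}, namely $\{\text{complete graph},\text{cycle},\text{complete prism}\}$. Complete graphs and cycles are trivial to recognize; a complete prism is characterized by the property that its complement is a perfect matching, which is also easy to test. Third, I would run a direct test for diamond-freeness of $G$, which can be carried out in $\mathcal{O}(n^{\omega})$ time via standard matrix-multiplication-based induced subgraph detection (e.g., enumerating adjacent pairs and checking their common-neighbor set for a non-edge). The algorithm outputs YES if and only if all three tests succeed.

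For correctness, the forward direction is a direct consequence of \cref{thm-GG2-diamond-decomp}: if $G$ is diamond-free and belongs to $\mathcal{G}_2$, then each atom inherits both properties (diamond-freeness is clearly hereditary, and $\mathcal{G}_2$ is hereditary by \cref{prop-GC-hereditary}); since the atoms have no clique-cutset, they must be complete graphs, cycles, or complete prisms, and $G$ is diamond-free by assumption. For the converse, one observes that complete graphs, cycles of length at least four, and complete prisms each lie in $\mathcal{G}_2$ (a direct inspection of their minimal separators), so \cref{cor-G2-clique-gluing} applied along the decomposition tree yields $G\in \mathcal{G}_2$, while diamond-freeness of $G$ is guaranteed by the separate explicit test.

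The subtle point I need to stress is that the class of diamond-free graphs is \emph{not} closed under gluing along a clique: gluing two triangles along an edge already creates a diamond. Consequently, even when every atom is diamond-free, the glued graph $G$ may contain a diamond, so a separate diamond-freeness test on $G$ is genuinely necessary and is the only reason we cannot simply read off the answer from the atoms. Apart from this point, the time-complexity analysis is routine: the decomposition step costs $\mathcal{O}(n^{\omega}\log n)$, the atom checks sum to $\mathcal{O}(n^{2})$ across all of $\mathcal{H}$, and the diamond test costs $\mathcal{O}(n^{\omega})$, giving an overall bound of $\mathcal{O}(n^{\omega}\log n)$ as claimed.
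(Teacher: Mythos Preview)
Your approach is essentially the paper's: test diamond-freeness in $\mathcal{O}(n^{\omega})$, run the Coudert--Ducoffe clique-cutset decomposition, and check that every atom is a complete graph, a cycle, or a complete prism, with correctness coming from \cref{thm-GG2-diamond-decomp} and \cref{cor-G2-clique-gluing}. Your remark that diamond-freeness is not preserved under clique-gluing (so it must be tested on $G$ itself) is a point the paper leaves implicit by simply testing diamond-freeness first.

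One correction is needed, however: your characterization of complete prisms is wrong. The complement of a short $n$-prism is \emph{not} a perfect matching; it is the crown graph $K_{n,n}$ minus a perfect matching (for instance, the complement of the short $3$-prism $\overline{C_6}$ is $C_6$). As stated, your test would reject every genuine complete prism and produce false negatives. Replace this step either by testing whether the complement is a crown graph, or by the paper's linear-time procedure: verify $|V(H)|=2k$ with all degrees equal to $k$, then examine the neighborhood of a single vertex to locate the two $k$-cliques. With this fix, the argument and the running-time analysis go through exactly as you describe.
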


\begin{proof}
Given a graph $G$ with $n$ vertices, testing if $G$ is diamond-free can be done in time $\mathcal{O}(n^{\omega})$ (see~\cite{MR3451135}).
Assuming $G$ is diamond-free, we compute the connected components of $G$ and run the
algorithm by Coudert and Ducoffe~\cite{MR3780116} on each nontrivial component of $G$.
This can be done in time $\mathcal{O}(n^{\omega}\log n)$.
The algorithm produces a family $\mathcal{H}$ of $\mathcal{O}(n)$ induced subgraphs of $G$ that do not have any clique-cutsets and such that $G$ can be obtained by an iterative application of gluing graphs from $\mathcal{H}$ along cliques.
We then check, for each graph $H\in \mathcal{H}$, whether $H$ is a complete prism, cycle, or a complete graph.
If this is the case, the algorithm determines that $G$ belongs to $\mathcal{G}_2$, and otherwise, it determines that $G$ does not belong to $\mathcal{G}_2$.
The correctness follows from \cref{cor-G2-clique-gluing} and \cref{thm-GG2-diamond-decomp}.

To complete the proof, we show that testing whether a given $H\in \mathcal{H}$ satisfies one of the desired properties can be done in time $\mathcal{O}(n+m)$, where $m$ denotes the number of edges of $G$.
Since $H$ is connected, testing if it is a cycle or a complete graph can be done in linear time simply by checking if all the vertex degrees are equal to $2$ or to $|V(H)|-1$, respectively.
If none of these cases occurs, we can assume that $n = 2k$ for some $k\ge 3$ and that every vertex in $H$ has degree exactly $k$, since otherwise, we can infer that $H$ is not a complete prism.
We choose an arbitrary vertex $v\in V(H)$ and compute the components of the graph $H[N_H(v)]$.
If $H$ is a short $k$-prism, then $H[N_H(v)]$ has exactly two components, say $C$ and $D$, such that $C$ is isomorphic to a complete graph $K_{k-1}$ and $D$ is a trivial component.
Set $A = C\cup \{v\}$ and $B = V(G)\setminus A$.
Since we already checked that all the vertices are of degree $k+1$, it remains to verify if $B$ is a clique of cardinality $k$.
If this is the case, then $H$ is a complete prism, otherwise it is not.
Each of the above constantly many steps can be carried out in linear time.
\end{proof}

Moreover, it is clear that the \textsc{Maximum Weight Clique}, \textsc{Maximum Weight Stable Set}, and \textsc{Vertex Coloring} can be solved in polynomial time for complete prisms, cycles, and complete graphs.
Thus, \cref{thm-GG2-diamond-decomp} and the algorithm by Coudert and Ducoffe \cite{MR3780116} allow us to solve these three optimization problems in polynomial time for diamond-free graphs in $\mathcal{G}_2$.
A more precise time complexity analysis is provided by the following.

\begin{theorem}\label{diamond-free-GG-2-algo}
When restricted to the class of diamond-free graphs in $\mathcal{G}_2$ with $n$ vertices and $m$ edges, the \textsc{Maximum Weight Clique} and \textsc{Vertex Coloring} problems can be solved in $\mathcal{O}(n^{\omega}\log n)$ time and the \textsc{Maximum Weight Stable Set} problem in $\mathcal{O}(n^2(n+m))$ time.
\end{theorem}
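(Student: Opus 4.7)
The plan is to exploit the structural decomposition guaranteed by \cref{thm-GG2-diamond-decomp}: given a diamond-free graph $G \in \mathcal{G}_2$, we run the clique-cutset decomposition algorithm of Coudert and Ducoffe~\cite{MR3780116} in time $\mathcal{O}(n^\omega \log n)$ to obtain a family $\mathcal{H}$ of $\mathcal{O}(n)$ clique-cutset-free atoms from which $G$ is recovered by iterated gluing along cliques. Since the atoms are induced subgraphs of $G$ and $\mathcal{G}_2$ is hereditary (\cref{prop-GC-hereditary}), \cref{thm-GG2-diamond-decomp} forces each such atom to be a complete prism, a cycle, or a complete graph. We then reduce each of the three optimization problems on $G$ to analogous problems on the atoms, all of which admit linear-time solutions by direct inspection of these simple structures.

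For \textsc{Maximum Weight Clique}, I would first observe (by induction on the number of gluings) that every clique of $G$ is contained in a single atom $H \in \mathcal{H}$: for a clique-cutset $C$ with cut-partition $(A,B,C)$, the anticompleteness of $A$ and $B$ forces every clique $K \subseteq V(G)$ to satisfy $K \subseteq A \cup C$ or $K \subseteq B \cup C$. Hence the optimum equals $\max_{H \in \mathcal{H}} \mathrm{MWC}(H)$, and MWC on a complete graph, a cycle, or a short $n$-prism is immediate in $\mathcal{O}(n_i+m_i)$ time (for the prism case one compares the two cliques $A$ and $B$ against the $n$ matched pairs $\{a_i,b_i\}$). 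For \textsc{Vertex Coloring} I would rely on the classical fact that gluing $H_1$ and $H_2$ along a common clique $C$ yields $\chi(H_1 \cup_C H_2) = \max\{\chi(H_1),\chi(H_2)\}$, with an explicit coloring built by permuting colors in $H_2$ so as to match $H_1$ on $C$; iterating gives $\chi(G) = \max_{H \in \mathcal{H}} \chi(H)$ together with a matching global coloring, while $\chi$ is trivial to compute and realize for each type of atom ($n$ for $K_n$, two or three for $C_n$, and $n$ for a short $n$-prism via a cyclic shift on the second clique). In both cases the bottleneck is the decomposition itself, yielding the $\mathcal{O}(n^\omega \log n)$ bound.

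For \textsc{Maximum Weight Stable Set} the reduction is subtler because stable sets may straddle clique cutsets. The plan is the standard dynamic programming on the clique-decomposition tree: root it at an arbitrary atom and process it bottom-up; for each atom $H$ with parent-clique $C$, maintain, for each of the $|C|+1$ ``states'' of $C$ (no vertex of $C$ selected, or exactly one $v \in C$ selected, possible since $C$ is a clique), the maximum weight of a stable set in the subtree rooted at $H$ realizing that state. To compute these at $H$ one first modifies the weights of the vertices of $C$ inside $H$ using the values already computed for the children of $H$, and then solves $|C|+1$ instances of MWSS on $H$ with a designated vertex of $C$ forced in or all of $C$ excluded. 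On a complete graph, a cycle, or a short $n$-prism such a forced MWSS instance takes $\mathcal{O}(n_i+m_i) = \mathcal{O}(n+m)$ time. With $\mathcal{O}(n)$ atoms and $\mathcal{O}(n)$ states per atom, summing gives the claimed $\mathcal{O}(n^2(n+m))$.

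The main technical obstacle will be the \textsc{Maximum Weight Stable Set} dynamic program: verifying correctness of the weight adjustments at each gluing node so that the locally optimal atom-states assemble into a globally optimal stable set of $G$, and that each atom-state pair is accounted for exactly once in the running-time analysis. Once this is in place, the remaining ingredients --- coloring propagation across gluings and the single-atom subroutines --- are routine given the very restricted structure of atoms guaranteed by \cref{thm-GG2-diamond-decomp}.
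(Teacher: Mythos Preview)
Your proposal is correct and follows essentially the same approach as the paper: decompose via Coudert--Ducoffe into $\mathcal{O}(n)$ clique-cutset-free atoms, invoke \cref{thm-GG2-diamond-decomp} to identify each atom as a complete graph, cycle, or complete prism, and then combine linear-time atom solutions (directly for \textsc{Maximum Weight Clique} and \textsc{Vertex Coloring}, and via the standard Tarjan-style weight-adjustment dynamic program for \textsc{Maximum Weight Stable Set}). Your bottom-up tree DP with $|C|+1$ states per atom is precisely the weight-redefinition scheme the paper cites from~\cite{tarjan1985decomposition}, so there is no substantive difference.
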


\begin{proof}
Let $G$ be a diamond-free graph with $n$ vertices and $m$ edges that belongs to $\mathcal{G}_2$.
For the \textsc{Maximum Weight Clique} and \textsc{Vertex Coloring} problems, the approach is as follows.
We compute the connected components of $G$ and run the algorithm by Coudert and Ducoffe~\cite{MR3780116} on each component of $G$.
This can be done in time $\mathcal{O}(n^{\omega}\log n)$.
We obtain a family $\mathcal{H}$ of $\mathcal{O}(n)$ induced subgraphs of $G$ that do not have any clique-cutsets and such that $G$ can be obtained by an iterative application of gluing graphs from $\mathcal{H}$ along cliques.
We now iterate over all $H\in \mathcal{H}$ and solve the \textsc{Maximum Weight Clique} and \textsc{Vertex Coloring} problems on $H$ in linear time.
By \cref{thm-GG2-diamond-decomp}, each graph $H\in \mathcal{H}$ is a complete prism, cycle, or a complete graph; as explained in the proof of \cref{prop:recognition-diamond-free}, which of these cases occurs can be determined in linear time in the size of $H$.
If $H$ is a complete graph, then its chromatic number is $|V(H)|$ and its vertex set solves the \textsc{Maximum Weight Clique} problem.
Otherwise, if $H$ is a cycle with at least four vertices, then its chromatic number is either $2$ or $3$, depending on whether $|V(H)|$ is even or odd, respectively, and to solve the \textsc{Maximum Weight Clique} problem, we only need to examine its edges.
Finally, if $H$ is a complete prism, say with $|V(H)| = 2k$ for some $k\ge 3$, then we can identify in linear time the two cliques $A$ and $B$, each of size $k$, that partition $V(H)$.
Then, the chromatic number of $H$ is $k$, and to solve the \textsc{Maximum Weight Clique} problem, we only need to examine the two cliques $A$ and $B$ and the $k$ edges connecting them.
In all these cases, the \textsc{Maximum Weight Clique} and \textsc{Vertex Coloring} problems can be solved in linear time for graphs in $\mathcal{H}$.
Since each clique of $G$ is fully contained in one of the graphs in $\mathcal{H}$, this provides an efficient solution to the \textsc{Maximum Weight Clique} problem on $G$.
Similarly, the chromatic number of $G$ is the maximum chromatic number of the graphs in~$\mathcal{H}$.

For the \textsc{Maximum Weight Stable Set} problem, the approach is similar, except that in each decomposition step decomposes $G$ along a cut-partition $(A,B,C)$ of $G$ such that $C$ is a clique and the subgraph of $G$ induced by $A\cup C$ belongs to $\mathcal{H}$.
For each vertex $v\in C$, we determine a maximum-weight stable set of the subgraph of $G$ induced by $A\setminus N_G(v)$, a maximum-weight stable set of the subgraph of $G$ induced by $A$, redefine the weights on $C$, and solve the problem recursively on the graph $G-A$. (We refer to~\cite{tarjan1985decomposition} for details; see also~\cite[Section 8.1]{MR3948128}.)
Thus, we solve $\mathcal{O}(|V(G)|)$ subproblems for each graph $H\in \mathcal{H}$ for a total of $\mathcal{O}(|V(G)|^2)$ subproblems.
Each of these subproblems can be solved in linear time.
If $H$ is a complete graph, then a heaviest vertex forms a maximum-weight stable set.
If $H$ is a cycle, then we can use the fact that cycles have bounded treewidth and apply the results from~\cite{MR1105479,MR1417901}.
Finally, assume that $H$ is a complete prism.
Then, for each vertex $v$ a maximum-weight stable set $S_v$ containing $v$ can be computed in $\mathcal{O}(|V(H)|)$ time: indeed, $S_v$ is of the form $S_v = \{v,z^v\}$ where $z^v$ is a vertex of maximum weight among the nonneighbors of $v$ in $H$. 
The heaviest among the sets $S_v$ forms a maximum-weight stable set.
The complexity of this approach is $\mathcal{O}(|V(H)|^2)$, which in this case is $\mathcal{O}(|V(H)| + |E(H)|)$.
\end{proof}

In conclusion, let us put the results of \cref{diamond-free-GG-2-algo} in perspective by comparing them with the known complexities of the three problems in the larger classes of diamond-free graphs and graphs in $\mathcal{G}_2$.
First, the \textsc{Vertex Coloring} problem is \NP-hard for diamond-free graphs~\cite{MR1905637}, as well as for graphs in $\mathcal{G}_2$, since it is already hard  for the subclass of circular-arc graphs~\cite{CircArcColNPComp}.
The situation is somewhat different for the  \textsc{Maximum Weight Stable Set} problem, which is \NP-hard (even in the unweighted case) in the class of diamond-free graphs, as can be seen using Poljak's reduction~\cite{poljak}, but solvable in $\mathcal{O}(n^6)$ time for $n$-vertex graphs in $\mathcal{G}_2$ (see~\cref{MWIS-GG_k}).

Finally, while the \textsc{Maximum Weight Clique} problem is known to be solvable in polynomial time both for diamond-free graphs as well as for graphs in $\mathcal{G}_2$, the running time of the algorithm given by \cref{diamond-free-GG-2-algo} improves on both time complexities.
By \cref{MWC-G22}, the problem can be solved in $\mathcal{O}(n^4)$ time for $n$-vertex graphs in $\mathcal{G}_2$.
For the class of diamond-free graphs, observe that every edge in such a graph is contained in a unique maximal clique.
Thus, a diamond-free graph with $n$ vertices and $m$ edges has $\mathcal{O}(n+m)$ maximal cliques, and the \textsc{Maximum Weight Clique} problem can be solved in polynomial time by enumerating all maximal cliques and returning one of maximum weight.
Using, for example, the maximal clique enumeration algorithm due to Makino and Uno~\cite{MR2159537}, this would result in an overall running time of $\mathcal{O}(n^{2.373}(n+m))$ on diamond-free graphs with $n$ vertices and~$m$~edges.

\subsection*{Acknowledgement}

The authors are grateful to the anonymous reviewers for their careful reading of the manuscript and valuable suggestions.

\bibliographystyle{abbrv}

\end{document}